\documentclass{amsart}
\usepackage{amssymb,
enumitem,
mathrsfs,
mathtools,
tikz,
upgreek,
verbatim,
hyphenat,
}
\usepackage[shadow]{todonotes}
\usepackage[colorlinks,linkcolor=blue,citecolor=blue]{hyperref}
\usepackage[T1]{fontenc}
\usepackage[shadow]{todonotes}



\newcommand{\ZZ}{\mathbb{Z}}

\newcommand{\RR}{\mathbb{R}}

\newcommand{\HH}{\mathbb{H}}

\newcommand{\id}{\operatorname{id}}

\newcommand{\Mod}{\operatorname{Mod}}




\newcommand{\lra}{\longrightarrow}
\newcommand{\eand}{\quad \text{ and } \quad}

\newcommand{\Stab}{\operatorname{Stab}}

\newenvironment{enumerate-(a)}{\begin{enumerate}[label={\upshape (\alph*)}, leftmargin=2pc]}{\end{enumerate}}
\newenvironment{enumerate-(a)-r}{\begin{enumerate}[label={\upshape (\alph*)}, leftmargin=2pc,resume]}{\end{enumerate}}
\newenvironment{enumerate-(a)-5}{\begin{enumerate}[label={\upshape (\alph*)}, leftmargin=2pc,start=5]}{\end{enumerate}}
\newenvironment{enumerate-(A)}{\begin{enumerate}[label={\upshape (\Alph*)}, leftmargin=2pc]}{\end{enumerate}}
\newenvironment{enumerate-(A)-r}{\begin{enumerate}[label={\upshape (\Alph*)}, leftmargin=2pc,resume]}{\end{enumerate}}
\newenvironment{enumerate-(i)}{\begin{enumerate}[label={\upshape (\roman*)}, leftmargin=2pc]}{\end{enumerate}}
\newenvironment{enumerate-(i)-r}{\begin{enumerate}[label={\upshape (\roman*)}, leftmargin=2pc,resume]}{\end{enumerate}}
\newenvironment{enumerate-(I)}{\begin{enumerate}[label={\upshape (\Roman*)}, leftmargin=2pc]}{\end{enumerate}}
\newenvironment{enumerate-(I)-r}{\begin{enumerate}[label={\upshape (\Roman*)}, leftmargin=2pc,resume]}{\end{enumerate}}
\newenvironment{enumerate-(1)}{\begin{enumerate}[label={\upshape (\arabic*)}, leftmargin=2pc]}{\end{enumerate}}
\newenvironment{enumerate-(1)-r}{\begin{enumerate}[label={\upshape (\arabic*)}, leftmargin=2pc,resume]}{\end{enumerate}}

\newtheorem{theorem}{Theorem}[section]
\newtheorem{lemma}[theorem]{Lemma}
\newtheorem{corollary}[theorem]{Corollary}
\newtheorem{proposition}[theorem]{Proposition}
\newtheorem{question}[theorem]{Question}

\theoremstyle{definition}
\newtheorem{definition}[theorem]{Definition}

\theoremstyle{remark}
\newtheorem{remark}[theorem]{Remark}

\begin{document}

\title[]{Rigidity of braid group actions on $\mathbb{R}$ and of low-genus mapping class group actions on $S^1$}

\date{}

\author[I.~Ba]{Idrissa Ba}

\email{ba162006@yahoo.fr}

\author[A.~Clay]{Adam Clay}

\address{Department of Mathematics, 420 Machray Hall, University of
Manitoba, Winnipeg, MB, R3T 2N2}
\email{Adam.Clay@umanitoba.ca}

\author[T.~Ghaswala]{Tyrone Ghaswala}
\address{Center for Education in Mathematics and Computing,
University of Waterloo,
Waterloo,
ON Canada N2L 3G1}
\email{tghaswala@uwaterloo.ca}

 \subjclass[2010]{Primary: 06F15, 20F60.}
 \keywords{Braid groups, mapping class groups, ordered groups, actions on $\mathbb{R}$ and $S^1$}
\thanks{Adam Clay was partially supported by NSERC grant RGPIN-05343-2020.}

\begin{abstract} 
Every nontrivial action of the braid group $B_n$ on $\mathbb{R}$ by orientation-preserving homeomorphisms yields, up to conjugation by a homeomorphism of $\mathbb{R}$, a representation $\rho : B_n \rightarrow \mathrm{H\widetilde{ome}o}_+(S^1)$ and therefore determines a translation number for every element of $B_n$.  In this manuscript we offer a simple characterisation of which actions of $B_n$ on $\mathbb{R}$ produce translation numbers that agree with those arising from the standard Nielsen-Thurston action on $\mathbb{R}$.  Our approach is to prove an analogous statement concerning left orderings of $B_n$ via a technique that uses the space of left orderings of $B_n$, the isolated points in this space, and the natural conjugacy action of $B_n$.  We use this result to extend recent rigidity results of Mann and Wolff concerning mapping class group actions on $S^1$ to the case of low-genus surfaces with marked points.
\end{abstract}

\maketitle

\section{Introduction}

We use $\Sigma_{g,n}^b$ to denote a surface of genus $g$ with $n$ marked points and $b$ boundary components, and $\Mod(\Sigma_{g,n}^b)$ its mapping class group.  It is well known that if $b>0$, then $\Mod(\Sigma_{g,n}^b)$ is a left-orderable group \cite{SW}.  The second and third authors showed in \cite{CG} that when $b=1$, the mapping class $T_d$ of a Dehn twist about a curve $d$ parallel to the boundary component is cofinal in every left ordering $<$ of $\Mod(\Sigma_{g,n}^b)$, meaning that 
\[ \Mod(\Sigma_{g,n}^b) = \{ \gamma \mid \exists k \in \mathbb{Z} \mbox{ such that } T_d^{-k} < \gamma < T_d^k \}.
\]

This fact allows for a type of correspondence when $g>0$ between left orderings of $\Mod(\Sigma_{g,0}^1)$ and circular orderings of $\Mod(\Sigma_{g,1}^0)$ by taking the quotient by the central subgroup $\langle T_d \rangle$.  Allowing marked points in the $g=0$ case establishes a similar correspondence between left orderings of the braid group $B_n \cong \Mod(\Sigma_{0,n}^1)$ where $n \geq 2$ and circular orderings of $B_n / \langle \Delta_n^2 \rangle$, which corresponds to the subgroup of $\Mod(\Sigma_{0,n+1}^0)$ whose elements fix a chosen marked point (here, $\Delta_n^2$ is the square of the Garside element and is a generator of the center of $B_n$ when $n \geq 3$).

Put another way, there is a correspondence between orientation-preserving actions of $\Mod(\Sigma_{g,0}^1)$ on $\mathbb{R}$ and orientation-preserving actions of $\Mod(\Sigma_{g,1}^0)$ on $S^1$, via the ``capping homomorphism'' when $g>0$. A similar correspondence occurs when $g=0$ upon restriction to an appropriate subgroup of $\Mod(\Sigma_{0,n+1}^0)$. 
This correspondence was key in \cite{CG}, which showed that (up to sign) the translation number of an element in $\Mod(\Sigma_{g,0}^1)$ is independent of the underlying action on $\mathbb{R}$ when $g \geq 2$.  This allows one to conclude, for instance, that one of the standard definitions of the fractional Dehn twist coefficient is independent of the underlying action (See Section \ref{FDTC} for this definition).

The genus restriction in that result arises naturally in the course of the proof, from an application of Mann and Wolff's \cite{MW} rigidity result concerning actions of $\Mod(\Sigma_{g,1}^0)$ on $S^1$.   In their work, they completely characterise which actions of $\Mod(\Sigma_{g,1}^0)$ on $S^1$ are semiconjugate to the standard action when $g \geq 2$.
 The essential step in \cite{CG} is  then translating their result into a corresponding fact about actions of $\Mod(\Sigma_{g,0}^1)$ on $\mathbb{R}$ where $g \geq 2$.  In the low-genus cases, that is when $g =0$ or $g=1$, it is easy to see from examples that the translation number of an element in $\Mod(\Sigma_{1,0}^1)$ or $\Mod(\Sigma_{0,n}^1)$ in \emph{not} independent of the underlying action \cite[Section 6]{CG}.  

Nonetheless, in the present paper we are able to complete the picture with an analysis of the genus $0$ and $1$ cases.  Whereas the technique described above involved applying an established rigidity result concerning actions on $S^1$ in order to arrive at an analogous statement about actions of a certain central extension on $\mathbb{R}$, in this manuscript our arguments work in the reverse manner.  Specifically, we introduce a new technique for establishing a type of rigidity of circle actions that begins with an analysis of the central extension and its space of left orderings.  To be more explicit, we introduce some notation.

Denote the space of left orderings of $B_n$ by $\mathrm{LO}(B_n)$, which we think of as the collection of all positive cones $P \subset B_n$.  Owing to the fact that the central element $\Delta_n^2$ is cofinal in every left ordering of $B_n$, for every $\beta \in B_n$ we can define the translation number $\tau_{\beta} : \mathrm{LO}(B_n) \rightarrow \mathbb{R}$, which turns out to be a continuous map (Proposition \ref{continuity}).  
Let $P_D \in \mathrm{LO}(B_n)$ denote the positive cone of the Dehornoy ordering. By combining continuity of $\tau_{\beta}$ with properties of the positive cone $P_D$ and its associated isolated point $P_{DD}$, the positive cone of the Dubrovina-Dubrovin ordering, we prove the following.  In Theorem \ref{intro:LO-rigidity} below, we implicitly identify $B_{n-1}$ with a subgroup of $B_n$ in the usual way, so that we can think of the generator $\Delta_{n-1}^2$ 
 of the centre of $B_{n-1}$ as an element of $B_n$.

\begin{theorem}\label{intro:LO-rigidity}
Suppose that $P \in \mathrm{LO}(B_n)$ and $\Delta_n^2 \in P$,  and that $\Delta_{n-1}^2$ is not cofinal in the left ordering of $B_n$ determined by $P$.  Then $\tau_{\beta}(P) = \tau_{\beta}(P_D)$ for all $\beta \in B_n$.
\end{theorem}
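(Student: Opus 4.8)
The plan is to recast the equality of translation numbers as a statement about the position of elements relative to the central cyclic subgroup $\langle \Delta_n^2\rangle$, and then to exploit the topology of $\mathrm{LO}(B_n)$ together with the conjugacy action. Writing $c_P(\gamma)$ for the largest integer $k$ with $(\Delta_n^2)^k \preceq_P \gamma$, so that $\tau_{\beta}(P) = \lim_{m} \tfrac1m c_P(\beta^m)$, I would first record two structural facts that hold for every $P$ with $\Delta_n^2\in P$: the translation number is homogeneous with $\tau_{\Delta_n^2}\equiv 1$, and it is conjugation-equivariant, $\tau_{\beta}(gPg^{-1}) = \tau_{g^{-1}\beta g}(P)$ (this uses centrality of $\Delta_n^2$, via $c_{gPg^{-1}}(\gamma)=c_P(g^{-1}\gamma g)$). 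An immediate dividend, requiring no hypotheses, is that the Garside-type element $\delta=\sigma_1\cdots\sigma_{n-1}$ satisfies $\delta^n=\Delta_n^2$, whence $\tau_{\delta}(P)=\tau_{\delta}(P_D)=\tfrac1n$ for all such $P$. Together with continuity (Proposition~\ref{continuity}), these let me transport translation-number data between an ordering, its conjugates, and their limits.

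The first genuine step is to extract a convex subgroup from the two hypotheses. By the cofinality result of \cite{CG} applied to $B_{n-1}\cong \Mod(\Sigma_{0,n-1}^1)$, the element $\Delta_{n-1}^2$ is cofinal in \emph{every} left ordering of $B_{n-1}$, and hence in the restriction of $<_P$ to $B_{n-1}$. The hypothesis, on the other hand, says $\Delta_{n-1}^2$ is not cofinal in $<_P$ on all of $B_n$. Consequently the convex hull $C$ of $\langle \Delta_{n-1}^2\rangle$ is a \emph{proper} convex subgroup of $(B_n,<_P)$ that contains $B_{n-1}$. Since $C$ is proper while $\Delta_n^2$ is cofinal, convexity forces $(\Delta_n^2)^{-1}\prec_P c \prec_P \Delta_n^2$ for every $c\in C$; applying this to $\beta^m$ for $\beta\in B_{n-1}$ shows $c_P(\beta^m)$ is bounded, so $\tau_{\beta}(P)=0$. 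The identical argument applies to $P_D$, whose restriction to $B_{n-1}$ is again a left ordering in which $\Delta_{n-1}^2$ is cofinal, while $\Delta_{n-1}^2$ is (by consistency of the hypotheses with $P=P_D$) not cofinal in $<_{P_D}$ on $B_n$. This yields $\tau_{\beta}(P)=\tau_{\beta}(P_D)=0$ for every $\beta\in B_{n-1}$.

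It remains to treat $\beta\in B_n\setminus B_{n-1}$, and this is where continuity and the isolated ordering $P_{DD}$ enter. Dynamically, the previous step says that in the circle action determined by $P$ the subgroup $B_{n-1}$ has a global fixed point, exactly as in the standard action coming from $P_D$, while $\delta$ has rotation number $\tfrac1n$; the rotation number of a general $\beta$ should then be a finite combinatorial quantity governed by how $\beta$ permutes the $B_n$-translates of this fixed configuration, controlled by the image of $\beta$ in $S_n$ together with the convex structure already pinned down. Since $B_n=\langle B_{n-1},\delta\rangle$, the plan is to compute this quantity at the single robust ordering $P_{DD}$, where the finitely generated positive cone makes $c_{P_{DD}}(\gamma)$ explicit, and then to transfer the computation to $P$ and to $P_D$ by a continuity-plus-equivariance argument, relating both orderings to $P_{DD}$ through the conjugacy action. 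The reason continuity can force \emph{equality} rather than a mere inequality is that on the relevant set the rotation numbers are quantized (valued in a fixed lattice once the $B_{n-1}$-fixed point and the value $\tau_{\delta}=\tfrac1n$ are fixed), so a continuous translation number is locally constant there.

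The hard part will be precisely this last step. The non-cofinality hypothesis controls $P$ only on the convex subgroup $B_{n-1}$, and one must promote this to control over the full translation spectrum; the delicate point is to show that the combinatorial arrangement forced on $P$ agrees with that of $P_D$ on the \emph{entire} $B_n$-orbit of the fixed configuration, ruling out that $P$ behaves like one of the isolated conjugates of $P_{DD}$ (which realise different rotation numbers). I expect the isolation of $P_{DD}$ to be essential here, as it is what prevents the limiting rotation numbers from drifting and what anchors the quantization argument that turns continuity into an exact match.
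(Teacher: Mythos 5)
Your preparatory material (conjugation equivariance, continuity, $\tau_{\delta}=1/n$) matches the paper, and the conclusion of your second step is correct: under the hypotheses, $\tau_{\beta}(P)=\tau_{\beta}(P_D)=0$ for all $\beta\in B_{n-1}$. But even there the route is shaky. In a merely left-ordered group the order-convex hull of a subgroup need not be a subgroup, and $\Delta_{n-1}^2$ is \emph{not} central in $B_n$, so ``the convex hull $C$ of $\langle\Delta_{n-1}^2\rangle$ is a proper convex subgroup containing $B_{n-1}$'' is unjustified as stated. The correct rescue is the computation the paper makes in Corollary \ref{cor:agreewithDehornoy}: if some power $\Delta_{n-1}^{2k}$ exceeded $\Delta_n^2$, then centrality of $\Delta_n^2$ gives $\Delta_{n-1}^{2km}\succ\Delta_n^{2m}$ for all $m$, forcing $\Delta_{n-1}^2$ to be cofinal, a contradiction; hence every power of $\Delta_{n-1}^2$ lies between $\Delta_n^{-2}$ and $\Delta_n^{2}$, which bounds $[\beta^m]_P$ for $\beta\in B_{n-1}$ and gives the vanishing translation numbers.

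The genuine gap is your third step, which is where the whole difficulty of the theorem lives and which you leave as a plan rather than an argument; moreover two of its ingredients are wrong or unsubstantiated. First, ``$B_{n-1}$ has a global fixed point in the action determined by $P$'' does not follow from $\tau_{\beta}(P)=0$ for each $\beta\in B_{n-1}$: vanishing rotation number element-by-element gives each element a fixed point but not a common one, since rotation number is not a homomorphism. (It is true, but the proof goes through the cut $s_0$ below, not through translation numbers.) Second, the ``quantization'' claim---that the relevant rotation numbers lie in a fixed lattice so that continuity forces local constancy---has no justification and is not how any rigidity enters. What the paper actually does is the following. It forms the totally ordered set of downward-closed subsets (Dedekind cuts) of $(B_n,<_P)$, observes that the cuts determined by $\Delta_{n-1}^{2i}$ converge to the cut $s_0$ determined by the set of elements bounded by powers of $\Delta_{n-1}^2$ (this is exactly where non-cofinality is used), and applies the accumulation result Proposition \ref{conv seq} to conclude that $\overline{\{\beta P\beta^{-1}\mid\beta\in B_n\}}$ contains an ordering of the form $P'\cup Q$, where $P'$ is a positive cone relative to $\Stab(s_0)$ and $Q\in\mathrm{LO}(\Stab(s_0))$. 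The Dubrovina--Dubrovin rigidity (Theorem \ref{DDtheorem}) then enters twice: to identify $\Stab(s_0)=B_{n-1}$ (Corollary \ref{maximal convex} and Proposition \ref{stab_prop}), and to show that any positive cone relative to $B_{n-1}$ equals $P_D^{rel}$ or its inverse (Lemma \ref{relative rigidity}), so the limit ordering is $P_D^{rel}\cup Q$. Finally, Lemma \ref{relative_translation} shows that translation numbers of a lexicographic ordering $P_D^{rel}\cup Q$ are independent of $Q$, hence agree with those of $P_D$; constancy of $\tau_{\beta}$ on orbit closures (your step one) finishes. Your proposal names the right objects---$P_{DD}$, continuity, the conjugacy action---but supplies no mechanism for promoting control on $B_{n-1}$ to all of $B_n$; the cut-space/orbit-closure argument and the two applications of Dubrovina--Dubrovin rigidity are precisely the missing content, and the quantization heuristic you propose in their place would not deliver them.
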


As an immediate consequence, we can determine exactly which left orderings of $B_n = \mathrm{Mod}(\Sigma_{0,n}^1)$ can be used to compute the fractional Dehn twist coefficient of an element $h \in \mathrm{Mod}(\Sigma_{0,n}^1)$ in terms of the translation number of $h$ determined by the left ordering.  We can state our result as follows.  We use $c(h)$ to denote the fractional Dehn twist coefficient of $h \in \Mod(\Sigma_{g,n}^1)$ and $T_{\alpha}$ the mapping class of a Dehn twist about a simple closed curve $\alpha$ in $\Sigma_{g,n}^b$.

\begin{theorem}
\label{thm:mappingclassorders}
Suppose that $P \subset  \mathrm{Mod}(\Sigma_{0,n}^1)$ is the positive cone of a left ordering $<$ and that $T_d >id$.  If there exists a simple closed curve $\alpha \subset  \Sigma_{0,n}^1 $ such that: 
\begin{enumerate}
\item  $\Sigma_{0,n}^1 \setminus \{ \alpha \} \cong \Sigma_{0,n-1}^1 \cup \Sigma_{0,1}^2$, and
\item $T_{\alpha}$ is not cofinal in the ordering $<$ of $  \mathrm{Mod}(\Sigma_{0,n}^1)$,
\end{enumerate}
then $c(h) = \tau_h(P)$ for all $h \in  \mathrm{Mod}(\Sigma_{0,n}^1)$.  
\end{theorem}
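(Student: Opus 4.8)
The plan is to translate the statement into the language of left orderings of $B_n$ and then invoke Theorem \ref{intro:LO-rigidity}. Using the standard isomorphism $\mathrm{Mod}(\Sigma_{0,n}^1) \cong B_n$, I would first set up a dictionary between the surface-theoretic data appearing in the hypotheses and the braid-group data appearing in Theorem \ref{intro:LO-rigidity}. Under this isomorphism the Dehn twist $T_d$ about the boundary-parallel curve corresponds to the central generator $\Delta_n^2$, so that the hypothesis $T_d > id$ is precisely the requirement that $\Delta_n^2 \in P$.

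The crux of the translation is to relate the curve $\alpha$ to the subgroup $B_{n-1} \leq B_n$. Condition (1) says that cutting along $\alpha$ produces a copy of $\Sigma_{0,n-1}^1$ (carrying $n-1$ of the marked points, with $\alpha$ as its boundary) together with a copy of $\Sigma_{0,1}^2$ (carrying the remaining marked point and the original boundary $d$). I would argue that the subsurface inclusion $\Sigma_{0,n-1}^1 \hookrightarrow \Sigma_{0,n}^1$ induces, on the level of mapping class groups, the standard embedding $B_{n-1} \hookrightarrow B_n$, and that under this embedding $T_\alpha$ is identified with the central generator $\Delta_{n-1}^2$ of $B_{n-1}$, since $\alpha$ is parallel to the boundary of the subsurface $\Sigma_{0,n-1}^1$. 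Consequently condition (2), that $T_\alpha$ is not cofinal in $<$, becomes exactly the hypothesis that $\Delta_{n-1}^2$ is not cofinal in the ordering determined by $P$.

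With the dictionary in place, Theorem \ref{intro:LO-rigidity} applies verbatim and yields $\tau_h(P) = \tau_h(P_D)$ for every $h \in B_n \cong \mathrm{Mod}(\Sigma_{0,n}^1)$. To finish, I would invoke the known identification of the translation number computed from the Dehornoy positive cone $P_D$ with the fractional Dehn twist coefficient: since $P_D$ is the positive cone arising from the standard Nielsen--Thurston action on $\mathbb{R}$, and the fractional Dehn twist coefficient is read off from that action, one has $c(h) = \tau_h(P_D)$ under the normalisation $T_d > id$. Combining this with the previous equality gives $c(h) = \tau_h(P)$, as desired.

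The main obstacle I anticipate is the careful verification of the dictionary, specifically showing that $T_\alpha$ is sent to the generator $\Delta_{n-1}^2$ of the centre of $B_{n-1}$ under the \emph{standard} embedding $B_{n-1} \hookrightarrow B_n$ used in Theorem \ref{intro:LO-rigidity}. This amounts to checking that the embedding induced by the subsurface inclusion agrees with the algebraically defined standard inclusion, and that a boundary-parallel twist in $\Sigma_{0,n-1}^1$ corresponds to the full-twist central element; both are standard facts about surface braid groups, but pinning down conventions (for instance, the placement of the marked points relative to $\alpha$ and the orientation of $\alpha$) requires attention. The remaining input, the equality $c(h) = \tau_h(P_D)$, should be immediate from the definition of the fractional Dehn twist coefficient via the Nielsen--Thurston action, or else citable from the earlier analysis in \cite{CG}.
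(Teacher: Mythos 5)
Your overall strategy---translate to $B_n$, apply Theorem \ref{intro:LO-rigidity}, and finish with the identification $\tau_h(P_D) = c(h)$---is essentially the content of the paper's argument: the paper routes through its Theorem \ref{thm:mappingclassgroupactions} on actions on $\mathbb{R}$, but that theorem's proof reduces to Corollary \ref{cor:agreewithDehornoy} (equivalently Theorem \ref{intro:LO-rigidity}) together with \cite[Theorem 7.5]{Mal}. However, there is a genuine gap in your dictionary. The hypothesis concerns an \emph{arbitrary} simple closed curve $\alpha$ with $\Sigma_{0,n}^1 \setminus \{\alpha\} \cong \Sigma_{0,n-1}^1 \cup \Sigma_{0,1}^2$, and such $\alpha$ need not be isotopic to the standard curve $\gamma$ whose twist is $\Delta_{n-1}^2$: for any mapping class $\beta$, the curve $\beta(\gamma)$ also satisfies condition (1). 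For a general such $\alpha$, the change of coordinates principle \cite[Section 1.3]{FM} gives only $T_\alpha = g \Delta_{n-1}^2 g^{-1}$ for some unknown $g \in B_n \cong \mathrm{Mod}(\Sigma_{0,n}^1)$. This matters because cofinality with respect to a \emph{fixed} left ordering is not a conjugation-invariant property: ``$g\Delta_{n-1}^2g^{-1}$ is not cofinal in $<_P$'' is equivalent to ``$\Delta_{n-1}^2$ is not cofinal in $<_{g^{-1}Pg}$'', which is not the same as ``$\Delta_{n-1}^2$ is not cofinal in $<_P$''. So your step ``condition (2) becomes exactly the hypothesis that $\Delta_{n-1}^2$ is not cofinal in the ordering determined by $P$'' fails, and Theorem \ref{intro:LO-rigidity} does not apply verbatim to $P$.

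The repair is short but it is a real step. Apply Theorem \ref{intro:LO-rigidity} to the conjugate positive cone $Q = g^{-1}Pg$: since $\Delta_n^2$ is central, $\Delta_n^2 \in Q$, and $\Delta_{n-1}^2$ is not cofinal in $<_Q$ by the equivalence above, so $\tau_h(Q) = \tau_h(P_D)$ for all $h$; then Proposition \ref{conjugation} gives $\tau_h(P) = \tau_h(Q) = \tau_h(P_D)$, and Malyutin's theorem finishes as you intended. It is worth noting why the paper's formulation avoids this snag: its working criteria---``$\rho(T_\alpha)$ has a fixed point'' in Theorem \ref{thm:mappingclassgroupactions}, and ``$\tau_{\Delta_{n-1}^2}(P)=0$'' in Corollary \ref{cor:agreewithDehornoy}---are both conjugation-invariant (the latter by Proposition \ref{conjugation}), so the ambiguity in the choice of $\alpha$ is absorbed automatically, whereas non-cofinality is not. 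Finally, your closing claim that $c(h) = \tau_h(P_D)$ is ``immediate'' because $P_D$ arises from the Nielsen--Thurston action is too quick: identifying the algebraically defined translation numbers of a particular ordering with the dynamically defined ones of a particular action requires an argument, and the paper instead cites \cite[Theorem 7.5]{Mal} for precisely this equality.
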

  This builds upon the results of \cite{Mal}, which shows that the Dehornoy ordering can be used to compute fractional Dehn twist coefficients of elements of $\mathrm{Mod}(\Sigma_{0,n}^1)$, and provides a uniqueness result for pseudocharacters $B_n \rightarrow \mathbb{R}$ provided the character takes on nonnegative values for all braids in $P_D$ \cite[Theorem 8.1]{Mal}.  This also contrasts with the results of \cite{CG}, which show that for surfaces of genus $g \geq 2$ any left ordering will serve to compute fractional Dehn twist coefficients.  There is an analogous statement in the case of $\mathrm{Mod}(\Sigma_{1,0}^1)$, owing to the fact that this group is isomorphic to $B_3$, see Theorem \ref{genus1}. 

We can also pass to the quotient $\Mod(\Sigma_{0,n+1}^0)$ using the capping homomorphism, and recover a rigidity result akin to that of Mann and Wolff \cite{MW}.  They determined exactly which orientation-preserving actions of $\Mod(\Sigma_{g,1}^0)$ on $S^1$ are semiconjugate to the standard action when $g \geq 2$, up to reversing orientation of the circle.  We answer that same question when $g=0$ or $g= 1$, by considering $\Mod(\Sigma_{1,1}^0)$, and the subgroup $H \subset \Mod(\Sigma_{0,n+1}^0)$ of mapping classes that fix a prescribed marked point (i.e., $H$ is the image of $\mathrm{Mod}(\Sigma_{0,n}^1)$ under the capping homomorphism).  

\begin{theorem}
\label{thm:intro_rigidity}
Let $H \subset \Mod(\Sigma_{0,n+1}^0)$ denote the image of $\mathrm{Mod}(\Sigma_{0,n}^1)$ under the capping homomorphism, and $*$ the special marked point arising from the capping homomorphism.  Let $\alpha$ denote a simple closed curve on $\Sigma_{0,n+1}$ that surrounds $n-1$ marked points, none of which are $*$.  If the rotation number of the Dehn twist about $\alpha$ is zero with respect to both $f_1$ and $f_2$,  then $[f_1] = \pm[f_2] \in H^2_b(H;\ZZ)$.
\end{theorem}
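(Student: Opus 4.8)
The plan is to reduce the statement about bounded cohomology classes $[f_1], [f_2] \in H^2_b(H;\ZZ)$ to the left-ordering rigidity result of Theorem \ref{intro:LO-rigidity}, by working through the correspondence between circle actions of $H$ and $\RR$-actions (equivalently, left orderings) of the central extension $\Mod(\Sigma_{0,n}^1) \cong B_n$. The first step is to recall that a class in $H^2_b(H;\ZZ)$ corresponds, via the standard dictionary, to a semiconjugacy class of actions of $H$ on $S^1$ by orientation-preserving homeomorphisms; the hypothesis ``$[f_1] = \pm[f_2]$'' is then precisely the statement that the two actions are semiconjugate up to reversal of orientation. So the goal is to show that the two $S^1$-actions encoded by $f_1$ and $f_2$ are semiconjugate (possibly after reversing orientation). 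Since $H$ is the image of $\Mod(\Sigma_{0,n}^1) \cong B_n$ under the capping homomorphism, whose kernel is generated by the central Dehn twist $T_d$ corresponding to $\Delta_n^2$, each $S^1$-action of $H$ pulls back to an $\RR$-action of $B_n$ in which $\Delta_n^2$ acts as the generator of the deck group (i.e. as the integer translation), exactly the setup in which translation numbers $\tau_\beta$ are defined.

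The second step is to translate the hypothesis on the Dehn twist about $\alpha$. Under the capping correspondence, the curve $\alpha \subset \Sigma_{0,n+1}$ surrounding $n-1$ marked points (none equal to $*$) corresponds to the curve in $\Sigma_{0,n}^1$ from Theorem \ref{thm:mappingclassorders}(1), whose Dehn twist $T_\alpha$ is identified with $\Delta_{n-1}^2 \in B_{n-1} \subset B_n$. The condition that the rotation number of the Dehn twist about $\alpha$ vanishes with respect to each $f_i$ is then equivalent, after passing to the pulled-back $\RR$-action and its associated positive cone $P_i \in \LO(B_n)$, to the statement that $\tau_{\Delta_{n-1}^2}(P_i) = 0$, which (since $\Delta_{n-1}^2 > id$ when $\Delta_n^2 \in P_i$) forces $\Delta_{n-1}^2$ to be \emph{non-cofinal} in the ordering determined by $P_i$. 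This is exactly hypothesis (2)/the non-cofinality condition of Theorem \ref{intro:LO-rigidity}.

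With both hypotheses of Theorem \ref{intro:LO-rigidity} verified for each $P_i$ (namely $\Delta_n^2 \in P_i$ after fixing orientation, and $\Delta_{n-1}^2$ non-cofinal), the theorem yields $\tau_\beta(P_i) = \tau_\beta(P_D)$ for all $\beta \in B_n$ and each $i \in \{1,2\}$, hence $\tau_\beta(P_1) = \tau_\beta(P_2)$ for all $\beta$. The third and final step is to upgrade this equality of translation numbers to a semiconjugacy of the circle actions. Here I would invoke the standard fact (Ghys' theorem) that two actions on $S^1$ are semiconjugate if and only if their bounded Euler cocycles are cohomologous, together with the fact that for actions arising from left orderings the translation number function $\tau$ determines the bounded Euler class: the homogeneous quasimorphism $\beta \mapsto \tau_\beta$ descends to the rotation-number quasimorphism on $H$, whose coboundary is the bounded Euler class. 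Equality of $\tau_\beta(P_1) = \tau_\beta(P_2)$ for all $\beta$ then gives equality of rotation-number quasimorphisms on $H$, and hence $[f_1] = [f_2]$ in $H^2_b(H;\ZZ)$. Reversing the orientation of $S^1$ interchanges the two choices of which generator lies in the positive cone, accounting for the $\pm$ sign.

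The main obstacle I anticipate is the final step: carefully matching the analytic data (equality of translation numbers $\tau_\beta$ for all $\beta \in B_n$) with the cohomological statement (equality in $H^2_b(H;\ZZ)$). The subtlety is that a single homogeneous quasimorphism does not in general determine a bounded cohomology class, so I must use the specific structure here—that $\Delta_n^2$ is cofinal and central, so the extension $1 \to \langle \Delta_n^2 \rangle \to B_n \to H \to 1$ realizes $\tau$ as (a lift of) the rotation number quasimorphism whose coboundary is exactly the real bounded Euler class, and then track the passage from $\RR$-coefficients back to the integral class $H^2_b(H;\ZZ)$ via the cofinality of $\Delta_n^2$, which guarantees the relevant actions have no global fixed point and are genuinely circle actions rather than degenerate ones.
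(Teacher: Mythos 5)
Your outline follows the same overall route as the paper's proof of Theorem \ref{main_rigidity} (pass to the central extension $B_n$, translate the rotation-number hypothesis into the non-cofinality hypothesis of Theorem \ref{intro:LO-rigidity}, apply that theorem, then convert equality of translation numbers back into bounded cohomology), but it has a genuine gap at its very first step, and this gap propagates. You assert that, because $H$ is the image of $B_n\cong\Mod(\Sigma_{0,n}^1)$ under the capping homomorphism with kernel $\langle\Delta_n^2\rangle$, each circle action of $H$ ``pulls back to an $\RR$-action of $B_n$ in which $\Delta_n^2$ acts as the generator of the deck group.'' This is not automatic: what it requires is that the central extension $0\to\ZZ\to\widetilde H_{f_i}\to H\to 1$ determined by the circular ordering $f_i$ be equivalent, up to sign, to the capping extension $0\to\ZZ\to B_n\to H\to 1$. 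Since $H$ contains torsion, $H^2(H;\ZZ)$ is nontrivial (the capping class already restricts to a generator of $H^2(\langle a\rangle;\ZZ)\cong\ZZ/n\ZZ$, where $a$ is the image of $\sigma_1\cdots\sigma_{n-1}$), so there are inequivalent central extensions of $H$ by $\ZZ$. If $\widetilde H_{f_i}$ were one of the others, the pulled-back action on $\RR$ would have $\Delta_n^2$ acting by translation by an integer $k$ with $|k|\neq 1$; then your step (ii) fails (vanishing rotation number of $T_\alpha$ would only give $\tau_{\Delta_{n-1}^2}(P_i)\in\frac{1}{k}\ZZ$), and the conclusion $[f_1]=\pm[f_2]$ fails already at the level of ordinary cohomology. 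Ruling this out is exactly the content of Proposition \ref{prop:ext-is-bn}, whose proof (explicit presentations, the minimal-generator normalisation for circularly ordered cyclic groups, and a left-orderability argument killing the twisting integers $k_i$) is the technical heart of Section \ref{sec:MCG}; nothing in your proposal substitutes for it, and mere torsion-freeness of $\widetilde H_{f_i}$ would not be enough to pin down the extension.

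There are two further gaps. First, even granting the correct extension, ``rotation number zero'' only yields $\tau_{\Delta_{n-1}^2}(P_i)\in\ZZ$, because rotation number is defined modulo $\ZZ$; your claimed equivalence with $\tau_{\Delta_{n-1}^2}(P_i)=0$ needs the a priori bound $-1<\tau_{\Delta_{n-1}^2}(P_i)<1$ of Corollary \ref{cor:bounded-translation-number}, which the paper obtains from the Garside-element computations of Lemma \ref{prop1} and Proposition \ref{prop:Cext-Gn-Bn}; this ingredient is absent from your outline (and your parenthetical justification ``since $\Delta_{n-1}^2>id$ when $\Delta_n^2\in P_i$'' is neither true in general nor relevant to this point). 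Second, your final step converts equality of translation numbers into $[f_1]=[f_2]\in H^2_b(H;\ZZ)$ via the rotation-number quasimorphism; as you yourself note, that argument only controls the real bounded Euler class, and equality in $H^2_b(H;\RR)$ does not imply equality in $H^2_b(H;\ZZ)$. The paper closes this with Proposition \ref{prop:translation-number-semi-conjugate}, an integral cochain argument showing that an equivalence of central extensions matching all translation numbers forces the difference cochain to be bounded by $1$ --- and the required equivalence of central extensions again rests on Proposition \ref{prop:ext-is-bn}. So while your strategy is the paper's strategy in outline, it omits the two essential technical inputs that make it work.
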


Stated more informally, this says that if rotation number of some curve surrounding $n-1$ marked points is zero, then this completely determines the circular ordering (equivalently, its corresponding action on $S^1$) up to semiconjugacy and reversing orientation.  Again, we can also resolve the genus-$1$ case by using the isomorphism between $\mathrm{Mod}(\Sigma_{1,0}^1)$ and $B_3$, see Corollary \ref{cor:genus-1}.

Note also that the rotation number assumption in Theorem 1.3 is necessary, in the sense that there exist circular orderings (equivalently, actions on $S^1$) for which curves encircling $n-1$ marked points do not have rotation number zero, and consequently these circular orderings/actions are not semiconjugate to the standard one.  Such examples can be realised easily from the examples of left orderings given in \cite[Section 6]{CG}.

Results of this flavour are not entirely surprising, as there is a known connection between isolated orderings and rigidity of their associated dynamical realisations \cite[Theorems 1.2 and 3.11]{MR}.  To our knowledge, however, this manuscript is the first to explicitly compute and exploit such a connection.  Our work also raises the following natural question.

\begin{question}
    When $b>0$, does $\mathrm{LO}(\Mod(\Sigma_{g,n}^b))$ admit isolated points?  If yes, can one recover the rigidity results of Mann and Wolff using a technique that exploits the isolated points in $\mathrm{LO}(\Mod(\Sigma_{g,0}^1))$?
\end{question}

\subsection{Organisation of the paper.} Section \ref{Background} contains background on orderable groups, including spaces of orderings, dynamic realisations and translation numbers.  In Section \ref{sec3} we show that the translation number is a continuous map on the space of orderings.  Section \ref{sec:dehornoy} introduces the Dehornoy ordering, the Dubrovina-Dubrovin ordering, and contains the proof of Theorem \ref{intro:LO-rigidity}.  In Section \ref{FDTC} we apply this result to fractional Dehn twist coefficients, and in Section \ref{sec:MCG} we prove a rigidity result for actions of low-genus mapping class groups on $S^1$.






\section{Background}
\label{Background}

\subsection{Left-orderable groups and spaces of orderings}

Recall that a group $G$ is \emph{left orderable} if there exists a strict total ordering $<$ of the elements of $G$ such that $g<h$ implies $fg<fh$ for all $f, g, h \in G$.  Equivalently, $G$ is left orderable if there exists a \emph{positive cone} $P \subset G$, satisfying $P \cdot P \subset P$, $G \setminus \{id \} = P \cup P^{-1}$ and $P \cap P^{-1} = \emptyset$.  The equivalence of these two definitions follows from observing that if $<$ is a given ordering of a group $G$, then $P_< = \{ g \in G \mid g>1\}$ is a positive cone; conversely if we are given a positive cone $P$ then $g<h \iff g^{-1}h \in P$ defines a strict total ordering of $G$ that is left-invariant.

A subgroup $C$ of a group $G$ is \emph{$<$-convex} (or convex with respect to $<$)  if for all $c,d \in C$ and $g \in G$, $c<g<d$ implies $g \in C$.  A subgroup $C$ of $G$ is \emph{relatively convex} if there exists an ordering $<$ of $G$ such that $C$ is $<$-convex.  We introduce the following definition for ease of discussion later in the manuscript.

\begin{definition}
\label{relative def}
Let $G$ be a group and $C$ a proper subgroup.  A \emph{relative left ordering of $G$ with respect to $C$} is a strict total ordering $<$ of the set of left cosets $G/C$ such that $gC <hC$ implies $fgC<fhC$ for all $f, g, h \in G$.  When such an ordering exists, we say that $G$ is \emph{relatively left orderable with respect to $C$}.
\end{definition}
 
It is a straightforward exercise to verify that if $G$ is left orderable, then $G$ admits a relative left ordering with respect to $C$ if and only if $C$ is a left-relatively convex subgroup.
  
Equivalently, $G$ is relatively left orderable with respect to a proper subgroup $C$ if there exists a \emph{positive cone relative to $C$}, that is, a subset $P \subset G$ satisfying:
\begin{enumerate}[label=(\roman*)]
\item $P \neq \{ id \}$,
\item $CPC \subset P$,
\item $G = C \sqcup P\sqcup P^{-1}$, where $\sqcup$ indicates a disjoint union. 
\end{enumerate}

There is a correspondence between relative left orderings as in Definition \ref{relative def} and nonempty subsets $P \subset G$ satisfying (i)--(iii) above.   The correspondence appears in \cite[Lemma 2.1]{AR21}.




We define the space of \emph{relative left orderings}, denoted $\mathrm{LO}_{rel}(G)$, as follows. Set
\[\mathrm{LO}_{rel}(G) = \{ P \subset G \mid \mbox{ $\exists$  $C \leq G$, $C \neq G$, s.t. $P$ is a positive cone relative to $C$} \},
\]
note that this is a subset of the power set $\mathcal{P}(G)$.  We equip the power set $\mathcal{P}(G) = \{0,1\}^G$ with the product topology arising from the discrete topology on $\{0,1\}$, and $\mathrm{LO}_{rel}(G)$ with the subspace topology.  The subbasic open sets are therefore:
\[ U_g = \{ P \in \mathrm{LO}_{rel}(G) \mid g \in P\}, \mbox{ and } U_{g^{-1}} = \{ P \in \mathrm{LO}_{rel}(G) \mid g^{-1} \in P\}.
\]

Equipped with the subspace topology, $\mathrm{LO}_{rel}(G)$ is not necessarily compact, however  $\mathrm{LO}_{rel}(G)$ is compact as long as $G$ is finitely generated  \cite[Theorem 1.4]{AR21}.   We similarly define the space of left orderings $$\mathrm{LO}(G) = \{P \subset G \mid \mbox{$P$ is a positive cone} \}$$ and topologise it again using the subspace topology inherited from $\mathcal{P}(G) = \{0,1\}^G$. Note that there is a natural embedding $i: \mathrm{LO}(G) \rightarrow \mathrm{LO}_{rel}(G)$, since every positive cone $P \subset G$ is a positive cone relative to the subgroup $\{id \}$.  Moreover, each of the spaces $\mathrm{LO}_{rel}(G)$ and $\mathrm{LO}(G)$ comes equipped with a natural $G$-action by conjugation, $g \cdot P = gPg^{-1}$ for all $g \in G$, which one can check is an action by homeomorphisms.

\subsection{Dynamic realisations and translation numbers}

Given a left-ordered group $(G, <)$, a \emph{gap} in $G$ is a pair of elements $g, h \in G$ with $g<h$ such that there is no $f \in G$ with $g<f<h$.  We will call an order-preserving embedding $t:(G, <) \rightarrow (\mathbb{R}, <)$  \emph{tight} if for every interval $(a, b) \subset \mathbb{R} \setminus t(G)$ there exists a gap $g, h \in G$ with $(a, b) \subset (t(g), t(h))$.
Whenever $G$ is countable there exists a tight embedding $t:(G, <) \rightarrow (\mathbb{R}, <)$ via the usual `midpoint construction', see e.g. \cite[Chapter 2]{CR16}.

If $G$ is a countable left-ordered group, a \emph{dynamic realisation} of an ordering $<$ with positive cone $P$ is a homomorphism $\rho_P: G \rightarrow \mathrm{Homeo}_+(\mathbb{R})$ defined as follows. First, fix a choice of tight embedding $t$, and then define $\rho_P(g)(t(h)) = t(gh)$ for all $g, h \in G$, extend this action to $\overline{t(G)}$ by taking limits, and then affinely to $\mathbb{R} \setminus \overline{t(G)}$.  One can check that all this works out, and that any two dynamic realisations of the same order (arising from different choices of tight embedding) are conjugate to one another via a orientation-preserving homeomorphism $\mathbb{R} \rightarrow \mathbb{R}$.

Given $\rho: G \rightarrow  \mathrm{H\widetilde{ome}o}_+(S^1)$, we can define the translation number of $g \in G$ determined by $\rho$ as the limit 
\[ \tau_g(\rho) = \lim_{n \to \infty} \frac{\rho(g)(0)}{n}.
\]
When $\rho = \rho_P$ for some ordering with positive cone $P$, we can (provided the group $G$ satisfies a certain algebraic condition) define this same quantity algebraically.

Recall that an element $g$ in a left-ordered group $(G, <)$ is cofinal relative to the ordering $<$ if 
\[ G = \{ h \in G \mid \exists k \in \mathbb{Z} \mbox{ such that } g^{-k} < h < g^k \}.
\]
When $z \in G$ is central, positive and cofinal relative to the ordering $<$ of $G$ with positive cone $P$, we can define the \emph{floor} of $g \in G$ with respect to $<$ to be the unique integer $[g]_P$ such that 
\[ z^{[g]_P} \leq g < z^{[g]_P+1}.
\]
Then we set 
\[ \tau_g(P) = \lim_{n \to \infty}\frac{[g^n]_P}{n}.
\]
While these quantities depend on the choice of $z \in G$, we suppress $z$ from our notation as the positive cofinal element under consideration will be fixed in our applications that follow.

 It is straightforward to work out the following proposition from the definitions; we use $sh(1)$ to denote the function $sh(1)(x) = x+1$ for all $x \in \mathbb{R}$.

\begin{proposition}
\label{translation number def}
Suppose that $<$ is a left ordering of $G$ with positive cone $P$ and that $z \in G$ is positive, central and $<$-cofinal.  If $\rho_P$ is a dynamic realisation of $<$ with $\rho_P(z) = sh(1)$, then $\tau_g(P) = \tau_g(\rho_P)$.
\end{proposition}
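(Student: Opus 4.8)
The plan is to evaluate the translation number $\tau_g(\rho_P)$ along the orbit of the point $t(id)$ and to match it term-by-term with the algebraic floor sequence defining $\tau_g(P)$. First I would record two structural facts. Since $z$ is central, for every $g \in G$ we have $\rho_P(g)\rho_P(z) = \rho_P(gz) = \rho_P(zg) = \rho_P(z)\rho_P(g)$, so $\rho_P(g)$ commutes with $sh(1)$; hence $\rho_P(g)(x+1) = \rho_P(g)(x)+1$, which confirms that $\rho_P$ genuinely takes values in $\mathrm{H\widetilde{ome}o}_+(S^1)$ and that $\tau_g(\rho_P)$ is defined. Second, from $\rho_P(z) = sh(1)$ and the defining relation $\rho_P(z)(t(h)) = t(zh)$ I get $t(zh) = t(h)+1$, and inductively
\[ t(z^k h) = t(h)+k \quad \text{ for all } k \in \mathbb{Z}, \ h \in G; \]
in particular $t(z^k) = t(id)+k$.

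Next I would compute the orbit of the basepoint. Iterating the relation $\rho_P(g)(t(h)) = t(gh)$ gives $\rho_P(g)^n(t(id)) = t(g^n)$ for all $n$. Using the classical fact that the translation number of an element of $\mathrm{H\widetilde{ome}o}_+(S^1)$ is independent of the chosen basepoint, I may evaluate it at $t(id)$ rather than $0$, so that $\tau_g(\rho_P) = \lim_{n\to\infty} t(g^n)/n$.

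The heart of the argument is then a sandwich estimate. Because $z$ is positive and $<$-cofinal, the floor $[g^n]_P$ is defined for every $n$, and by definition $z^{[g^n]_P} \leq g^n < z^{[g^n]_P+1}$. Applying the order-preserving embedding $t$ and the identity $t(z^k) = t(id)+k$ yields
\[ t(id) + [g^n]_P \;\leq\; t(g^n) \;<\; t(id) + [g^n]_P + 1, \]
so that $t(g^n) - t(id) - [g^n]_P \in [0,1)$. Dividing by $n$ and letting $n \to \infty$, both $t(id)/n$ and the bounded remainder divided by $n$ vanish, so $\lim_n t(g^n)/n$ and $\lim_n [g^n]_P/n$ coincide whenever either limit exists.

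Finally I would address existence and conclude: the limit $\tau_g(\rho_P) = \lim_n t(g^n)/n$ exists by the standard theory of translation numbers on $\mathrm{H\widetilde{ome}o}_+(S^1)$, and the sandwich then forces $\lim_n [g^n]_P/n = \tau_g(P)$ to exist and to equal it. I expect the only genuinely delicate steps to be the basepoint-independence of the translation number (standard, but worth invoking explicitly) and the bookkeeping that makes cofinality of $z$ do its job, namely guaranteeing that each $g^n$ lies strictly between two consecutive powers of $z$ so that $[g^n]_P$ is well defined and the above estimate is valid uniformly in $n$.
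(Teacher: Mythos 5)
Your proof is correct. The paper gives no argument for this proposition---it is stated as a straightforward consequence of the definitions---and your argument (the observation that centrality of $z$ forces $\rho_P(g)$ to commute with $sh(1)$, the sandwich $t(id)+[g^n]_P \leq t(g^n) < t(id)+[g^n]_P+1$ obtained by applying the order-preserving embedding $t$ to the floor inequalities, and basepoint-independence plus existence of the classical translation number) is exactly the intended working-out.
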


We can therefore tackle questions about translation numbers algebraically, using left orderings of $G$ and cofinal, central elements.  We observe that conjugation invariance of the translation number (a classical result) implies the following in our algebraic setting. 

\begin{proposition}
\label{conjugation}
Suppose that $<$ is a left ordering of $G$ with positive cone $P$ and that $z \in G$ is positive, central and $<$-cofinal. 
For all  $g,h \in G$, we have $\tau_{h}(P) = \tau_{h}(g P g^{-1})$.
\end{proposition}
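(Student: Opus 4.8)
The plan is to deduce the statement from the classical conjugation invariance of the Poincar\'{e} translation number, transported into the algebraic setting via the passage to dynamic realisations furnished by Proposition \ref{translation number def}. Write $Q = gPg^{-1}$ and let $<_Q$ be the associated left ordering, so that $a <_Q b$ if and only if $g^{-1}ag < g^{-1}bg$; that is, the inner automorphism $\phi_g(a) = g^{-1}ag$ is an order isomorphism $(G, <_Q) \to (G, <)$. First I would run the routine check that $z$ remains positive, central and $<_Q$-cofinal: centrality is unaffected, positivity holds because $g^{-1}zg = z \in P$, and cofinality transports along $\phi_g$ since $z$ is fixed by $\phi_g$. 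Hence $\tau_h(Q)$ is defined, and Proposition \ref{translation number def} becomes available for both $P$ and $Q$ once suitable dynamic realisations are exhibited.

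Next I would produce a dynamic realisation of $<_Q$ explicitly related to $\rho_P$. Starting from the tight embedding $t$ used to build $\rho_P$, set $s = t \circ \phi_g$, i.e.\ $s(a) = t(g^{-1}ag)$. Since $\phi_g$ is an order isomorphism onto $(G,<)$ carrying gaps to gaps, $s$ is a tight embedding of $(G, <_Q)$, and because $\phi_g$ is a bijection of $G$ we have $s(G) = t(G)$ as subsets of $\mathbb{R}$. Writing $\rho_Q$ for the dynamic realisation built from $s$, a direct computation on the dense set $t(G)$ gives
\[ \rho_Q(a)\bigl(s(b)\bigr) = s(ab) = t\bigl(g^{-1}abg\bigr) = \rho_P\bigl(g^{-1}ag\bigr)\bigl(t(g^{-1}bg)\bigr), \]
so $\rho_Q(a)$ and $\rho_P(g^{-1}ag)$ agree on $t(G)$, hence on $\overline{t(G)}$ by continuity. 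As both maps are affine on each complementary interval of the common closed set $\overline{s(G)} = \overline{t(G)}$ and match at its endpoints, they coincide everywhere. Thus $\rho_Q(a) = \rho_P(g^{-1}ag)$ for every $a$, and in particular $\rho_Q(z) = \rho_P(z) = sh(1)$.

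With both realisations normalised to send $z$ to $sh(1)$, Proposition \ref{translation number def} yields $\tau_h(P) = \tau_h(\rho_P)$ and $\tau_h(Q) = \tau_h(\rho_Q)$. Since $\rho_P$ is a homomorphism, $\rho_Q(h) = \rho_P(g^{-1}hg) = \rho_P(g)^{-1}\rho_P(h)\rho_P(g)$, and all of these homeomorphisms commute with $sh(1)$, so they descend to the circle and their translation numbers are defined. The final step is the classical invariance of the translation number under conjugation, giving $\tau_h(\rho_Q) = \tau_h(\rho_P)$ and therefore $\tau_h(gPg^{-1}) = \tau_h(P)$. I expect the only genuine obstacle to lie in the middle paragraph: verifying that the twisted embedding $s$ is really tight and that the realisation it produces coincides with $\rho_P \circ \phi_g$ after the limit-and-affine extension, so that Proposition \ref{translation number def} applies cleanly to both sides. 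Everything else reduces either to centrality of $z$ or to the quoted classical fact.
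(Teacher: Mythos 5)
Your proof is correct, but it takes a more dynamical route than the paper, which argues almost entirely algebraically. The paper's proof is three lines: writing $Q = gPg^{-1}$, it conjugates the defining inequalities $z^{[h^k]_P} \leq_P h^k <_P z^{[h^k]_P+1}$ and uses centrality of $z$ to get $z^{[h^k]_P} \leq_Q gh^kg^{-1} <_Q z^{[h^k]_P+1}$, hence the floor identity $[(ghg^{-1})^k]_Q = [h^k]_P$ and so $\tau_{ghg^{-1}}(Q) = \tau_h(P)$; it then invokes the classical conjugation invariance of translation number once, to pass from $\tau_{ghg^{-1}}(Q)$ to $\tau_h(Q)$. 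Your argument establishes the same correspondence between conjugating the ordering and conjugating the element, but at the level of dynamic realisations rather than floors: you show $\rho_{gPg^{-1}} = \rho_P \circ \phi_g$ for the realisation built from the twisted tight embedding $s = t\circ\phi_g$, then transfer through Proposition \ref{translation number def} on both sides and finish with the same classical fact. Both proofs therefore rest on the identical external ingredient; what differs is the cost of the intermediate step. The paper's floor computation needs nothing beyond the definitions, whereas your route obliges you to verify tightness of $s$, the agreement of the two affine-and-limit extensions, and---a point you assume rather than prove---the existence of a tight embedding $t$ for which $\rho_P(z) = sh(1)$ in the first place (Proposition \ref{translation number def} is stated conditionally on this normalisation; existence is true and standard, since $z$ positive, central and cofinal forces $\rho_P(z)$ to be fixed-point free and hence conjugate to $sh(1)$, but it is an extra lemma your write-up silently uses). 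What your approach buys in exchange is the explicit and conceptually pleasant identity $\rho_{gPg^{-1}}(a) = \rho_P(g^{-1}ag)$, i.e.\ that conjugating a positive cone conjugates its dynamic realisation by the corresponding inner automorphism, which is more information than the proposition itself requires.
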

\begin{proof} Let $P \in \mathrm{LO}(G)$ and $g, h \in G$.  For ease of notation, set $g Pg^{-1} = Q$. For any $k\in \mathbb{N}$ we have 
\[ z^{[h^k]_{P}} \leq_P h^k <_P z^{[h^k]_{P} + 1}
\]
which, by definition of $Q$, yields
\[ z^{[h^k]_{P}} \leq_Q g h^kg^{-1} <_Q z^{[h^k]_{P} + 1}
\]
and therefore $[(g hg^{-1})^k]_Q = [h^k]_P$.
It follows that $\tau_{h}(Q) = \tau_{g h g^{-1}}(Q) = \tau_{h}(P)$, where the equality $\tau_{h}(Q) = \tau_{g h g^{-1}}(Q)$ is precisely the classical fact that translation number is invariant under conjugation.
\end{proof}


\section{Continuity of translation number and its behaviour with respect to relative orderings}
\label{sec3}

For a left-orderable group $G$, we say an element $z \in G$ is \emph{absolutely cofinal} if $z$ is cofinal with respect to every left ordering of $G$.  Our interest in such elements stems from the fact that $\Delta_n^2 \in B_n$, which is a generator of the center of $B_n$ for $n \geq 3$, is absolutely cofinal \cite[Chapter II, Proposition 3.6]{DDRW}.

Let $G$ be a left-orderable group with a central, absolutely cofinal element $z$. Observe that $LO(G) = U_z \cup U_{z^{-1}}$ is a disjoint union, where $U_z$ contains precisely the orderings relative to which $z$ is positive. In this section we study the map $\tau_g:U_z \to \RR$, defined by the translation number for each $g \in G$, and show that it is continuous.  We first require a lemma.

\begin{lemma}
\label{continuity lemma}
Let $G$ be a countable left-orderable group with a central, absolutely cofinal element $z$. Let $P,Q \in U_z \subset LO(G)$ and let $m > 0$ be an arbitrary positive integer. If $g \in G$ satisfies $[g^k]_P = [g^k]_Q$ for all $1 \leq k \leq m$, then for all $\ell > m$ we have $\left|[g^\ell]_P - [g^\ell]_Q\right| < \frac{\ell}{m} + 1$. 
\end{lemma}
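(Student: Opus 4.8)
The plan is to extract a near-additivity (quasimorphism) property of the floor function from the centrality of $z$, and then telescope it over a division-with-remainder decomposition of $\ell$.

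\textbf{Step 1: Near-additivity of the floor.} First I would show that for any $P \in U_z$ and any positive integers $a,b$,
\[
[g^a]_P + [g^b]_P \;\leq\; [g^{a+b}]_P \;\leq\; [g^a]_P + [g^b]_P + 1 .
\]
The defining inequalities are $z^{[g^a]_P} \leq_P g^a <_P z^{[g^a]_P+1}$ and $z^{[g^b]_P} \leq_P g^b <_P z^{[g^b]_P+1}$. Left-multiplying the first pair by $g^b$ and using that $z$ is central (so $g^b z^j = z^j g^b$) together with left-invariance gives $z^{[g^a]_P+[g^b]_P} \leq_P g^{a+b} <_P z^{[g^a]_P+[g^b]_P+2}$, which yields both inequalities. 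The main care needed here is tracking the directions of the inequalities and using centrality precisely to commute the powers of $z$ past $g^b$; this is the only delicate point in the argument.

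\textbf{Step 2: Telescoping.} Iterating Step 1 over any decomposition $\ell = a_1 + \cdots + a_s$ into $s$ positive parts gives
\[
\sum_{i=1}^{s} [g^{a_i}]_P \;\leq\; [g^\ell]_P \;\leq\; \sum_{i=1}^{s} [g^{a_i}]_P + (s-1),
\]
since each of the $s-1$ applications of the upper bound contributes at most one extra $+1$. I would then write $\ell = qm + r$ with $q = \lfloor \ell/m \rfloor$ and $0 \leq r < m$, using the $s = q+1$ parts $m,\dots,m,r$ when $r \geq 1$ (and $s=q$ parts when $r=0$). By hypothesis $[g^m]_P = [g^m]_Q$ and $[g^r]_P = [g^r]_Q$ (these indices lie in the range $1 \leq k \leq m$, with the convention $[g^0]=0$), so setting $A = q[g^m]_P + [g^r]_P$ the telescoping bound places both $[g^\ell]_P$ and $[g^\ell]_Q$ in the interval $[A, A+(s-1)]$.

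\textbf{Step 3: Conclusion.} It follows that $\bigl|[g^\ell]_P - [g^\ell]_Q\bigr| \leq s-1 \leq q = \lfloor \ell/m \rfloor \leq \tfrac{\ell}{m} < \tfrac{\ell}{m} + 1$, which is the claimed bound (indeed slightly sharper). I do not anticipate a genuine obstacle beyond Step 1; once near-additivity is in hand, the remainder is a routine counting of how many error terms accumulate across the division of $\ell$ into blocks of size $m$.
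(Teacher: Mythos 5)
Your proof is correct and takes essentially the same route as the paper's: the paper also writes $\ell = sm+t$ by division with remainder, uses centrality of $z$ and left-invariance to combine the defining inequalities for $[g^m]_P$ and $[g^t]_P$, and traps both $[g^\ell]_P$ and $[g^\ell]_Q$ in a common interval of length $s \leq \ell/m$. Your Step 1 near-additivity lemma plus telescoping is just a modular repackaging of that single computation, yielding the identical bound.
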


\begin{proof}
Write $\ell = sm+t$, where $s>0$ and $0\leq t < m$.  Set $r = [g^m]_P = [g^m]_Q$, and $v = [g^t]_P = [g^t]_Q$.  From the inequalities
\[ z^r \leq_P g^m <_P z^{r+1} \mbox{ and } z^v \leq_P g^t <_P z^{v+1}
\]
we compute
\[ z^{rs + v} \leq_P g^{sm+t} <_P z^{rs+s+v+1}.
\]
It follows that $rs+v \leq [g^{\ell}]_P \leq rs+s+v$.  A similar computation yields $rs+v \leq [g^{\ell}]_Q \leq rs+s+v$, so that $| [g^\ell]_P - [g^\ell]_Q| \leq s < \frac{\ell}{m}+1$.
\end{proof}

\begin{proposition}
\label{continuity}
Let $G$ be a countable left-orderable group with absolutely cofinal central element $z$. Given $g \in G$, the map $\tau_g:U_z \to \mathbb{R}$ is continuous.
\end{proposition}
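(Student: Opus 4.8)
The plan is to prove continuity at each point $P \in U_z$ directly from the $\varepsilon$--$\delta$ characterisation of continuity, using Lemma \ref{continuity lemma} to convert the agreement of finitely many floors into control of the translation number. So I would fix $P \in U_z$ and $\varepsilon > 0$, choose an integer $m$ with $1/m < \varepsilon$, and aim to produce a basic open neighbourhood $N$ of $P$ such that every $Q \in N$ satisfies $[g^k]_Q = [g^k]_P$ for all $1 \leq k \leq m$. Lemma \ref{continuity lemma} will then finish the argument.

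The key step is to observe that, for each fixed $k$, the statement that $[g^k]_Q = j$ (where $j := [g^k]_P$) is equivalent to a conjunction of finitely many membership conditions on the positive cone $Q$, and hence cuts out an open set. Indeed, unwinding the definition of the floor, $[g^k]_Q = j$ is equivalent to $z^j \leq_Q g^k <_Q z^{j+1}$, i.e. to $z^{-j} g^k \in Q \cup \{\id\}$ together with $g^{-k} z^{j+1} \in Q$ (using centrality of $z$). The element $g^{-k} z^{j+1}$ is never trivial, so the upper inequality is precisely the condition $Q \in U_{g^{-k} z^{j+1}}$; and for the lower inequality, either $g^k = z^j$ (in which case it holds with equality for every $Q \in U_z$ and imposes no constraint), or $g^k \neq z^j$, in which case it is exactly the condition $Q \in U_{z^{-j} g^k}$. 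Intersecting these finitely many subbasic open sets over $1 \leq k \leq m$ yields an open set $N$ containing $P$, and by construction every $Q \in N$ agrees with $P$ on the floors $[g^k]$ for $1 \leq k \leq m$.

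Finally I would invoke Lemma \ref{continuity lemma}: for $Q \in N$ and every $\ell > m$ we obtain $\bigl|[g^\ell]_P - [g^\ell]_Q\bigr| < \tfrac{\ell}{m} + 1$, and therefore
\[
\left| \frac{[g^\ell]_P}{\ell} - \frac{[g^\ell]_Q}{\ell} \right| < \frac{1}{m} + \frac{1}{\ell}.
\]
Letting $\ell \to \infty$ and using $\tau_g(P) = \lim_{\ell \to \infty} [g^\ell]_P / \ell$ gives $|\tau_g(P) - \tau_g(Q)| \leq 1/m < \varepsilon$ for every $Q \in N$, which establishes continuity at $P$; since $P \in U_z$ was arbitrary, $\tau_g$ is continuous on $U_z$.

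The main obstacle is really the second, bookkeeping step: one must check carefully that agreement of the floors up to level $m$ is an \emph{open} condition, paying particular attention to the degenerate case $g^k = z^j$, where the defining inequality becomes an equality that holds in every ordering of $U_z$ and so contributes no membership constraint. Once this is in place, the remainder is a routine limit computation powered by Lemma \ref{continuity lemma}.
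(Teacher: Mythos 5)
Your proof is correct, and it executes the shared core idea by a genuinely different route than the paper. Both arguments hinge on Lemma \ref{continuity lemma} and on the observation that agreement of the floors $[g^k]$ for $1 \leq k \leq m$ forces the translation numbers to lie within $1/m$ of each other. The paper, however, proves \emph{sequential} continuity: it takes a sequence $P_q \to P$ in $U_z$, uses convergence to force agreement of the floors up to level $m$ for all large $q$, and then invokes an iterated-limit theorem (citing Bartle) to interchange the limits in $q$ and $k$; this is legitimate because countability of $G$ makes $\mathrm{LO}(G)$ metrizable, so sequential continuity suffices, and indeed the paper's subsequent Remark notes that removing countability would require recasting the proof in terms of nets. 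You instead verify the topological definition of continuity directly: you show that the conditions $[g^k]_Q = [g^k]_P$ for $1 \leq k \leq m$ cut out a finite intersection of subbasic open sets --- with the necessary care in the degenerate case $g^k = z^j$, and using that $g^k \neq z^{j+1}$ is automatic because the upper floor inequality is strict --- and then conclude with a single explicit limit in $\ell$, uniform over the neighbourhood. This buys two things: you avoid the double-limit interchange (and the external citation) entirely, and your argument nowhere uses countability of $G$ beyond quoting the lemma (whose proof also does not use it), so it yields the generalisation the paper relegates to its Remark without any appeal to nets. The paper's sequential formulation is marginally quicker to write once one grants metrizability of $\mathrm{LO}(G)$, but your neighbourhood argument is the more elementary and more general of the two.
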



\begin{proof}
Suppose that $\{P_q\}_{q=1}^{\infty}$ is a sequence of positive cones converging to $P$ in $U_{z}$, and fix $g \in G$.  We first consider the double limit $$\lim_{(q,k) \to \infty} \frac{[g^k]_{P_q}-[g^k]_{P}}{k}$$
and show that this limit tends to zero.  To this end, let $\epsilon >0$ and choose $m$ such that $\frac{1}{m} < \frac{\epsilon}{2}$.  Observe that we can choose $N_m$ so that $[g^k]_{P_q} = [g^k]_P$ for all $q>N_m$ and $1 \leq k \leq m$.  Here we use that the sequence $P_q$ converges to $P$ in order to ensure that the orders determined by $P_q$ and $P$ agree on the finite set of inequalities that determine $[g^k]_{P_q}$ and $[g^k]_P$.  Then by Lemma \ref{continuity}, $| [g^\ell]_{P_q} - [g^\ell]_P| < \frac{\ell}{m}+1$ for all $\ell >m$ and $q >N_m$.  Thus for $\ell, q > \max\{m, N_m\}$ we compute
\[\frac{| [g^\ell]_{P_q} - [g^\ell]_P|}{\ell} <  (\frac{\ell}{m}+1)\cdot \frac{1}{\ell} < \frac{2}{m} <\epsilon.
\]
This shows that 
$$\lim_{(q,k) \to \infty} \frac{[g^k]_{P_q}}{k}- \frac{[g^k]_{P}}{k} = 0.$$

Now considering the limit 
\[ \lim_{q \to \infty} \tau_{g}(P_q) - \tau_{g}(P) = \lim_{q \to \infty} \lim_{k \to \infty}\left( \frac{[g^k]_{P_q} - [g^k]_P}{k}\right)
\]
we observe that for each fixed $q$ the limit $\lim_{k \to \infty}\left( \frac{[g^k]_{P_q} - [g^k]_P}{k}\right)$ exists and is equal to $\tau_{g}(P_q) - \tau_{g}(P)$.  It follows (e.g. \cite[Corollary 19.6]{Bar}) that $ \lim_{q \to \infty} \tau_{g}(P_q) - \tau_{g}(P) =0$, proving the claim.
\end{proof}

We remark that some generalisations of this proposition are possible, though are not required for this manuscript:

\begin{remark}
\begin{enumerate}

\item It is possible to extend the map $\tau_{g}$ to have $\mathrm{LO}(G)$ as its domain.  To do this, we
observe that if $P$ does not contain $z$, i.e. if $z^{-1} \in P$, then we can set
\[ \tau_{g}(P) = - \lim_{k \to \infty} \frac{[g^k]_{P^{-1}}}{k}.
\]
Continuity of the map $\tau_{g}: \mathrm{LO}(G) \rightarrow \mathbb{R}$ defined in this way follows from Proposition \ref{continuity} and the fact that $ U_{z} \cup U_{z^{-1}}$ is a separation of $\mathrm{LO}(G)$.  

\item For a central element $z$ in a left-orderable group $G$, we can let $$U_z = \{P \in LO(G)\mid z \in P \text{ and $z$ is cofinal with respect to $P$}\}$$ (note that $U_z$ may be empty). The proof of Proposition \ref{continuity} shows that for $g \in G$, the map $\tau_g:U_z \to \mathbb{R}$ is continuous.
\item If the countability assumption on $G$ is removed, $LO(G)$ is no longer a sequential space \cite[Example 4.12]{BC21}. However, proofs above can be recast in the language of nets to remove the countability assumption. We will not need this level of generality for the applications that follow.
\end{enumerate}
\end{remark}

For the next lemma, note that if $P \in  \mathrm{LO}_{rel}(G)$ is a positive cone relative to the proper subgroup $C$, then for every $Q \in \mathrm{LO}(C)$ the set $Q \cup P$ is the positive cone of a left ordering of $G$---that is, $Q \cup P \in \mathrm{LO}(G)$. The corresponding ordering is the standard lexicographic one, constructed as follows: assume that the proper subgroup $C$ of $G$ admits a left order $<_Q$ corresponding to the positive cone $Q$, and that the left cosets $G/C$ admit a total ordering $<_P$ with corresponding relative cone $P$. 

We define the total ordering $<$ on $G$
as follows: 
\[
g_1< g_2 \Leftrightarrow \begin{cases}
g_1 C <_P g_2 C \ &\text{if } g_1^{-1}g_2\notin C,\\
id <_Q g_1^{-1}g_2 &\text{if } g_1^{-1}g_2\in C.
\end{cases}
\]

We require this construction for the next lemma.

\begin{lemma}
\label{relative_translation}
Suppose that $G$ is a group admitting an absolutely cofinal, central element $z \in G$. 
 Further suppose that $P \in \mathrm{LO}_{rel}(G)$ is a positive cone relative to the proper subgroup $C \subset G$,  that $z \in P$ and that $Q_1, Q_2 \in \mathrm{LO}(C)$.  Then for all $g \in G$ we have $\tau_{g}(Q_1 \cup P) = \tau_{g}(Q_2 \cup P)$.
\end{lemma}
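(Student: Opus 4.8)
The plan is to compare the floor functions $[g^k]_{Q_1 \cup P}$ and $[g^k]_{Q_2 \cup P}$ directly and show that they differ by at most $1$ for every $k$; dividing by $k$ and letting $k \to \infty$ then forces the two translation numbers to agree. Write $<_1$ and $<_2$ for the left orderings with positive cones $Q_1 \cup P$ and $Q_2 \cup P$, which are genuine left orderings by the lexicographic construction recalled just before the lemma. Since $z \in P$ we have $z \notin C$, and $z$ is positive and, being absolutely cofinal, cofinal in both orderings, so $Q_1 \cup P, Q_2 \cup P \in U_z$ and both floors are well defined. The guiding observation is that $<_1$ and $<_2$ induce the \emph{same} total order on the coset space $G/C$ (namely the relative order $<_P$ determined by $P$), and differ only when comparing two elements lying in one common coset of $C$, where the verdict is rendered by $Q_1$ versus $Q_2$.

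First I would record that the cosets $\{z^m C : m \in \mathbb{Z}\}$ are pairwise distinct, equivalently that $z^j \notin C$ for all $j \neq 0$. This follows from the lexicographic structure together with $z \notin C$ and left-invariance: from $id <_i z$ and $z \notin C$ we get $C <_P zC$, so if $z^j \in C$ for some least $j \geq 2$, then $z^j C = C <_P zC \ni z$ gives $z^j <_i z$, while left-invariance gives $z^j >_i z$ (the case $j = 1$ already contradicts $z \notin C$). Consequently the coset sequence $z^m C$ is strictly $<_P$-increasing in $m$. Now fix $k$. The floor $[g^k]_{Q_i \cup P}$ is determined by comparing $g^k$ against the powers $z^m$, and each such comparison is dictated by the coset order $<_P$---hence identical in $<_1$ and $<_2$---\emph{unless} $z^{-m} g^k \in C$, i.e. unless $g^k$ lies in the coset $z^m C$. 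By distinctness of the cosets $z^m C$, the element $g^k$ lies in at most one of them, say $z^{m_0} C$; if it lies in none, every comparison agrees and the two floors are equal.

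In the remaining case, the comparisons of $g^k$ against $z^{m_0 - 1}$ and $z^{m_0 + 1}$ are governed by $<_P$ and, by strict monotonicity of the coset sequence, yield $z^{m_0 - 1} <_i g^k <_i z^{m_0 + 1}$ in both orderings. Hence each floor $[g^k]_{Q_i \cup P}$ lies in $\{m_0 - 1, m_0\}$, the sole ambiguity---whether $g^k$ sits above or below $z^{m_0}$---being resolved by $Q_i$ and able to move the floor by at most one. Therefore $\bigl|[g^k]_{Q_1 \cup P} - [g^k]_{Q_2 \cup P}\bigr| \leq 1$ for every $k$, so that $\bigl|[g^k]_{Q_1 \cup P}/k - [g^k]_{Q_2 \cup P}/k\bigr| \leq 1/k \to 0$; since both limits defining $\tau_g$ exist, we conclude $\tau_g(Q_1 \cup P) = \tau_g(Q_2 \cup P)$.

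I expect the main obstacle to be the bookkeeping that isolates the single ambiguous power $z^{m_0}$: one must verify that every comparison other than $g^k$ versus $z^{m_0}$ is forced by the shared coset order, which rests precisely on the distinctness of the cosets $z^m C$. Once that structural fact is secured, the $\leq 1$ bound and the limiting step are routine.
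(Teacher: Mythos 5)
Your proof is correct and follows essentially the same route as the paper's: both arguments exploit that the two lexicographic orderings share the coset order $<_P$ and can disagree only when $g^k$ lies in a coset $z^{m_0}C$, forcing $\bigl|[g^k]_{Q_1 \cup P} - [g^k]_{Q_2 \cup P}\bigr| \leq 1$ and hence equal translation numbers in the limit. The only cosmetic difference is that you establish global distinctness of the cosets $z^mC$ up front, whereas the paper argues locally that consecutive translates $z^{-m}g$, $z^{-m-1}g$ cannot both lie in $C$ (since $z$ is central and $z \notin C$); the substance is identical.
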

\begin{proof} 
Set $P_1 = Q_1 \cup P$, $P_2 = Q_2 \cup P$, each with corresponding orderings $<_1$ and $<_2$ of $G$ constucted lexicographically from $<_{Q_1}$, $<_{Q_2}$ and $<_P$ as in the preceding paragraphs.

Fix $g \in G$, and suppose that 
\[z^{m} \leq_1 g <_1 z^{m+1},
\]
so that $[g]_{P_1} = m$.  

First note that if $z^{-m} g \notin C$ and $z^{-m-1} g \notin C$, then $z^{m}C <_P g C <_P z^{m+1} C$ and so $z^{m} <_i g <_i z^{m+1}$ for $i=1,2$. It follows that $[g]_{P_1} = [g]_{P_2}$.

Next, suppose that $z^{-m} g \in C$. Observe that this implies $z^{-m-1} g \notin C$ and $z^{-m+1} g \notin C$, because $z^{-m} g, z^{-m-1} g \in C$ implies $z = z^{m+1} g^{-1} g z^{-m} \in C$, a contradiction--similarly for $z^{-m+1} g$.  Therefore from the inequality $z^{zm} \leq_1 g <_1 z^{m+1}$, which implies $z^{m-1} <_1 g <_1 z^{m+1}$, we deduce that $z^{m-1}C <_P g C<_P z^{m+1}C$ and so $z^{m-1} <_2 g <_2 z^{m+1}$.  Therefore $[g]_{P_2} \in \{m-1, m\}$.

Last, suppose $z^{-m-1} g \in C$ and observe that this implies $z^{-m} g \notin C$ and $z^{-m-2} g \notin C$.  Proceeding as in the previous paragraph we conclude that $[g]_{P_2} \in \{m, m+1\}$.

We conclude that $|[g]_{P_1} - [g]_{P_2}| \leq 1$ for all $g \in G$, so that for any $g \in G$
\[ \lim_{k \to \infty} \frac{[g^k]_{P_1} - [g^k]_{P_2}}{k} = 0,
\]
so that $\tau_{g}(Q_1 \cup P) = \tau_{g}(P_1) = \tau_{g}(P_2) = \tau_{g}(Q_2 \cup P)$.
\end{proof}

\section{The Dehornoy ordering, isolated points and accumulation points}
\label{sec:dehornoy}
In this section we use the algebraic properties of the Dehornoy ordering, and the closely associated Dubrovina-Dubrovin ordering, to prove our main theorem.  We frequently use, without mention, that $\Delta_n^2 \in B_n$ is absolutely cofinal \cite[Chapter II, Proposition 3.6]{DDRW}. 

\subsection{Generalities on ordered sets and group actions}

Suppose that $ (\Omega, <)$ is a totally ordered set, and that $G$ is a left-ordered group acting on $\Omega$ in an order-preserving fashion.  For each $x \in \Omega$, and for each choice of positive cone $P \in \mathrm{LO}(\Stab(x))$, one can create a positive cone $Q_{P,x} \subset G$ according to the rule:
\[ g \in Q_{P,x}  \iff g\cdot x >x \mbox{ or } g \in \Stab(x) \mbox{ and } g \in P.
\]
For each $x \in \Omega$, set 
\[ \mathfrak{O}_x = \{ Q_{P,x} \mid P \in \mathrm{LO}(\Stab(x)) \},
\]
and for each subset $S \subset \Omega$, set 
\[\mathfrak{O}_S = \bigcup_{x \in S} \mathfrak{O}_x.
\]
\begin{proposition} \cite[\textit{c.f.} Lemmas 3.6 and 3.7]{BC} 
\label{conv seq}
Let $G$ be a countable left-ordered group acting on a totally ordered set $ (\Omega, <)$ by order-preserving bijections.  Let $x \in \Omega$, and suppose that there exists a sequence of points $\{x_i\}_{i=1}^{\infty} \subset \Omega$ converging to $x$ in the order topology on $\Omega$. Let $S \subset \Omega$.  If $\{x_i\}_{i=1}^{\infty} \subset S$ then $\mathfrak{O}_x \cap \overline{\mathfrak{O}_S} \neq \emptyset$.
\end{proposition}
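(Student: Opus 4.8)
The plan is to realise the desired element of $\mathfrak{O}_x$ as a limit, in $\mathrm{LO}(G)$, of positive cones drawn from $\mathfrak{O}_S$. For each $i$ I would choose any $P_i \in \mathrm{LO}(\Stab(x_i))$ and set $Q_i = Q_{P_i, x_i}$; since $x_i \in S$, each $Q_i$ lies in $\mathfrak{O}_{x_i} \subseteq \mathfrak{O}_S$. Because $G$ is countable, $\{0,1\}^G$ is metrizable and $\mathrm{LO}(G)$ is a closed (hence compact) subspace of it, so $\mathrm{LO}(G)$ is sequentially compact; I would then pass to a subsequence along which $Q_i \to Q$ for some $Q \in \mathrm{LO}(G)$. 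Such a $Q$ automatically lies in $\overline{\mathfrak{O}_S}$, so the entire task reduces to showing $Q \in \mathfrak{O}_x$, i.e. that $Q = Q_{P,x}$ for a suitable $P \in \mathrm{LO}(\Stab(x))$.

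First I would note that an order-preserving bijection of a totally ordered set is an order isomorphism, hence a homeomorphism in the order topology; in particular $g x_i \to g x$ for every $g \in G$. I would next record the elementary fact that order-topology limits preserve weak inequalities: if $a_i \to a$, $b_i \to b$, and $a_i \le b_i$ for all $i$, then $a \le b$. (This is proved by separating $a$ and $b$ with a point strictly between them when one exists, and otherwise by the two complementary open rays at the jump.) The key consequence is that $gx > x$ forces $g x_i > x_i$ for all large $i$ — otherwise $g x_i \le x_i$ infinitely often would pass to the limit as $gx \le x$. Consequently, for every $g \notin \Stab(x)$: if $gx > x$ then $g \in Q_i$ eventually, so $g \in Q$; and if $gx < x$ then $g^{-1} x > x$, so $g^{-1} \in Q_i$ eventually and $g \notin Q$. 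In short, $g \in Q \iff gx > x$ for all $g \notin \Stab(x)$, matching the defining rule of $Q_{P,x}$ off the stabiliser.

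It then remains to treat $\Stab(x)$, which I would handle simply by setting $P = Q \cap \Stab(x)$. Since $Q$ is a genuine positive cone of $G$, its intersection with the subgroup $\Stab(x)$ is automatically a positive cone of $\Stab(x)$: both closure under multiplication and the trichotomy $\Stab(x) \setminus \{id\} = P \sqcup P^{-1}$ are inherited directly from $Q$. With this $P$, the two clauses defining $Q_{P,x}$ are satisfied — off $\Stab(x)$ by the previous paragraph, and on $\Stab(x)$ because there $g \in Q_{P,x} \iff g \in P \iff g \in Q$ — so $Q = Q_{P,x} \in \mathfrak{O}_x$, completing the argument.

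The hard part, I expect, will be the second paragraph: verifying that strict order relations between $x$ and $gx$ survive in the limit along $x_i$. Because the order topology on a general totally ordered set can have both gaps and jumps, the separation argument underpinning inequality-preservation must be carried out in both the case where a point lies strictly between and the case of a jump. Everything else is bookkeeping once compactness of $\mathrm{LO}(G)$ supplies the limiting cone $Q$.
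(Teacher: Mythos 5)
Your proof is correct, and it takes a genuinely different (though closely parallel) route from the paper's. Both arguments begin by choosing $Q_i \in \mathfrak{O}_{x_i}$, and both rest on the same dynamical core---an element $g$ with $g\cdot x > x$ must satisfy $g \cdot x_i > x_i$ for all large $i$---but they apply compactness in different places, which changes what must be verified afterwards. The paper extracts a convergent subsequence of the \emph{restricted} cones $P_i = Q_i \cap \Stab(x)$ inside $\mathrm{LO}(\Stab(x))$, names the candidate limit $Q_{P,x}$ in advance, and then proves the convergence $Q_{i_j} \to Q_{P,x}$ by hand: given a basic neighbourhood $\bigcap_{i=1}^n U_{g_i}$ of $Q_{P,x}$, it separates the $g_i$ that move $x$ from those lying in $\Stab(x) \cap P$, forms the order-topology neighbourhood $(y',y)$ of $x$ from the points $g_i^{\pm 1}\cdot x$, and checks membership in the $U_{g_i}$ by a two-case analysis according to whether $x_{i_j}$ lies above or below $x$. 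You instead extract a convergent subsequence of the \emph{full} cones $Q_i$ in $\mathrm{LO}(G)$ itself (equally justified: $G$ countable, $\mathrm{LO}(G)$ closed in $\{0,1\}^G$, hence compact metrizable and sequentially compact), so that membership of the limit $Q$ in $\overline{\mathfrak{O}_S}$ is automatic, and the remaining task is to recognise $Q$ as an element of $\mathfrak{O}_x$. Your identification step---continuity of each $g$ in the order topology plus the lemma that weak inequalities survive order-topology limits, with its jump/no-jump dichotomy---is the paper's dynamical core in different clothing, while your post hoc choice $P = Q \cap \Stab(x)$ makes the stabiliser clause in the definition of $Q_{P,x}$ tautological; by contrast, the paper needs the convergence $P_{i_j} \to P$ in $\mathrm{LO}(\Stab(x))$ as a separate input precisely to handle those coordinates. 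The trade-off: the paper's version yields slightly more explicit information (the cones $Q_{i_j}$ converge to a cone built from the limit of their restrictions), whereas yours has lighter bookkeeping because the limit never has to be guessed in advance, only analysed after compactness hands it to you.
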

\begin{proof}

First, for each $i \in \mathbb{N}$ choose a positive cone $Q_i \in \mathfrak{O}_{x_i}$, and set $P_i = Q_i \cap \Stab(x)$.   Then $\{P_i\}_{i=1}^{\infty}$ is a sequence of positive cones in $\mathrm{LO}(\Stab(x))$, choose a convergent subsequence $\{P_{i_j}\}_{j=1}^{\infty}$ with limit $P \in \mathrm{LO}(\Stab(x))$--this is possible since $G$ is countable and therefore $\mathrm{LO}(\Stab(x))$ is a compact metric space.\footnote{Countability is necessary here to ensure that $\mathrm{LO}(G)$ is sequentially compact, which does not hold in general.}

We claim that $Q_{P, x} \in \overline{\mathfrak{O}_S}$ since $\{Q_{i_j}\}_{j=1}^{\infty}$ converges to $Q_{P,x}$, which will complete the proof. To see this, suppose $\bigcap_{i=1}^n U_{g_i}$ is an arbitrary basic open neighbourhood of $Q_{P, x}$ in $LO(G)$.  Then for each $i=1, \dots, n$, either $g_i \cdot x >x$ or $g_i \in \Stab(x) \cap P$; suppose without loss of generality that $g_i \cdot x >x$ for $i = 1, \dots, k$ and that $g_i \in \Stab(x) \cap P$ for $i = k+1, \dots, n$.

Set $y = \min_{i = 1, \dots, k}\{ g_i \cdot x \}$ and $y' = \max_{i=1, \dots, k} \{g_i^{-1} \cdot x\}$, so that $(y', y)$ is a neighbourhood of $x$ in the order topology on $\Omega$.  Now choose $s, t \in \mathbb{N}$ as follows:  Choose $s$ large enough that for all $j>s$, $x_{i_j} \in  (y', y)$, and choose $t$ large enough that for all $j >t$ we have $P_{i_j} \in \bigcap_{i=k+1}^n U_{g_i} \subset \mathrm{LO}(\Stab(x))$.  Set $N = \max\{s, t\}$.

We finish by showing that $Q_{i_j} \in \bigcap_{i=1}^n U_{g_i}$ for $j>N$.  First, $Q_{i_j} \in \bigcap_{i=1}^k U_{g_i}$, which we can verify by considering cases.  Fix $g_i$ with $i \in \{1, \dots, k\}$.

\noindent \textbf{Case 1.} $x \leq x_{i_j} <y$.  Then  $g_i \cdot x_{i_j} \geq y > x_{i_j}$ so that $Q_{i_j} \in U_{g_i}$.

\noindent \textbf{Case 2.} $y'  < x_{i_j} < x$.   Then  $g_i^{-1} \cdot x_{i_j}  \leq y' < x_{i_j} $ so that $x_{i_j} < g \cdot x_{i_j}$ and again $Q_{i_j} \in U_{g_i}$.


Next, $Q_{i_j} \in \bigcap_{i=k+1}^n U_{g_i}$ because $g_i \in P_{i_j} \subset Q_{i_j}$ for all $i \in \{k+1, \dots, n\}$ and for all $j >t$.  Overall, this yields $Q_{i_j} \in \bigcap_{i=1}^n U_{g_i}$  and completes the claim.

\end{proof}

\subsection{The Dehornoy and Dubrovina-Dubrovin orderings}

We recall the definition of the Dehornoy ordering.  Given a word $w$ in the generators $\sigma_1, \dots, \sigma_{n-1}$, we call $w$ an $i$-positive word (resp. $i$-negative word) if there exists $i \in \{ 1, \dots, n-1\}$ such that $\sigma_i$ occurs in $w$ with only positive exponents (resp. negative exponents), and there are no occurrences of $\sigma_j^{\pm 1}$ with $j>i$.  We call a braid $\beta \in B_n$ an $i$-positive braid if $\beta$ admits an $i$-positive representative word.   Set
\[ P_D = \{ \beta \in B_n \mid \beta \mbox{ is $i$-positive for some $1 \leq i \leq n-1$} \}.
\]

\begin{theorem}[\cite{Deh}]
The set $P_D$ is the positive cone of a left ordering of $B_n$.
\end{theorem}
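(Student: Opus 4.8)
The plan is to verify directly that $P_D$ satisfies the three defining properties of a positive cone: $P_D \cdot P_D \subseteq P_D$, $B_n \setminus \{id\} = P_D \cup P_D^{-1}$, and $P_D \cap P_D^{-1} = \emptyset$. The closure property is the only elementary one. Given an $i$-positive representative of $\alpha$ and a $j$-positive representative of $\beta$ with, say, $i \geq j$, their concatenation has maximal-index generator $\sigma_i$ occurring only with positive exponents (since the word for $\beta$ involves no $\sigma_k$ with $k>j$, and $j \leq i$), and no $\sigma_k$ with $k>i$; hence $\alpha\beta$ is $i$-positive. The very same word-level computation shows that the concatenation of any two $\sigma$-positive words is again $\sigma$-positive, a fact I will reuse below.

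The remaining two properties I would reduce to the two fundamental features of the Dehornoy order, isolated as separate lemmas. The first is \emph{Comparison}: every nontrivial braid admits a representative that is $i$-positive or $i$-negative for some $i$; this yields $B_n \setminus \{id\} = P_D \cup P_D^{-1}$. The second is \emph{Acyclicity}: no word representing the trivial braid is $\sigma$-positive. Granting Acyclicity, disjointness follows quickly. If some $\beta$ lay in $P_D \cap P_D^{-1}$, it would have an $i$-positive representative $w_+$ and a $j$-negative representative $w_-$; reversing and inverting the $j$-negative word $w_-$ produces a $j$-positive word $w_-^{-1}$, so the concatenation $w_+ w_-^{-1}$ is a $\sigma$-positive word (by the concatenation fact above) that represents $\beta\beta^{-1} = id$, contradicting Acyclicity.

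To prove Comparison I would use \emph{handle reduction}: a $\sigma_i$-handle is a subword of the form $\sigma_i^{e} v \sigma_i^{-e}$ (with $e = \pm 1$) in which $v$ involves only generators of index $>i$ and in which $\sigma_{i+1}$ occurs with a single sign, and the reduction move rewrites such a subword using the braid relations so as to cancel the outer pair $\sigma_i^{e}, \sigma_i^{-e}$. I would then argue that iterated handle reduction terminates, producing a handle-free representative; a nonempty handle-free word has its maximal-index generator appearing with a single sign, hence is $\sigma$-positive or $\sigma$-negative, while the trivial braid reduces only to the empty word. For Acyclicity I would either track a suitable complexity invariant under handle reduction to show that reduction of a $\sigma$-positive word never yields the empty word, or, in keeping with this paper's geometric viewpoint, invoke the faithful order-preserving action of $B_n$ on isotopy classes of arcs in the punctured disk (equivalently, on $\pi_1$ of the punctured disk): a $\sigma$-positive braid moves a standard arc diagram strictly in the positive direction near the relevant puncture, so it cannot be the identity.

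The hard part will be the termination of handle reduction, together with Acyclicity. The reduction move can lengthen a word and create new handles at lower indices, so convergence is far from obvious and requires a carefully chosen well-founded complexity measure; this combinatorial core is the genuinely delicate part of Dehornoy's theorem. I expect the cleanest route to sidestep it is the geometric one: establishing the faithful order-preserving action on the punctured disk gives both that $\sigma$-positivity is detectable (hence Comparison) and that $\sigma$-positive braids are nontrivial (hence Acyclicity) in one stroke, at the cost of identifying the resulting left order with the combinatorially defined cone $P_D$.
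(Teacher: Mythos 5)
The first thing to note is that the paper contains no proof of this statement: it is imported wholesale from Dehornoy's work via the citation \cite{Deh}, and the manuscript's only contribution is the cosmetic change of convention, flagged immediately after the theorem, in which ``$i$-positive'' refers to the \emph{maximal}-index generator rather than the minimal one. So your proposal can only be compared against the literature. Measured that way, your reduction of the cone axioms to the two pillars of Dehornoy's theory (Comparison and Acyclicity) is the standard architecture, and the elementary steps are right: closure under concatenation works exactly as you say under the paper's convention, and deriving $P_D \cap P_D^{-1} = \emptyset$ by concatenating $w_+$ with the letterwise inverse of the reversed $j$-negative word is the usual argument. (One bookkeeping point: Comparison gives only the inclusion $B_n \setminus \{id\} \subseteq P_D \cup P_D^{-1}$; the reverse inclusion, $id \notin P_D \cup P_D^{-1}$, is again Acyclicity. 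You have both ingredients in hand, so nothing is missing, but the attribution as written is off.)

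Two genuine problems remain. First, a concrete error: your handle definition is calibrated to the \emph{standard} (minimal-index) convention, not the paper's. With handles $\sigma_i^{e} v \sigma_i^{-e}$ in which $v$ involves only generators of index $> i$, handle-freeness controls the lowest-index letter, not the highest: the word $\sigma_2\sigma_1\sigma_2^{-1}$ contains no handle in your sense, yet its maximal-index generator occurs with both signs, so your claim that ``a nonempty handle-free word has its maximal-index generator appearing with a single sign'' is false as stated. For the paper's convention you must dualise the definition ($v$ involving only generators of index $< i$, with the single-sign condition on $\sigma_{i-1}$), or transport everything through the flip automorphism $\sigma_i \mapsto \sigma_{n-i}$ and quote the standard statement. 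Second, and more fundamentally, what you have written is a roadmap rather than a proof: termination of handle reduction and Acyclicity are precisely where all the difficulty of Dehornoy's theorem lives, and both are deferred --- as you candidly acknowledge. The geometric route you point to (the order-preserving action on arcs/curve diagrams in the punctured disk) is indeed a standard way to obtain both Comparison and Acyclicity in one stroke, but invoking it amounts to citing a proof of the theorem, not giving one. As a blind reconstruction of the structure of Dehornoy's argument your proposal is faithful; as a proof it is incomplete at exactly the two points where completeness is hard-won.
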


As a consequence, 
\[ P_D^{-1} = \{ \beta \in B_n \mid \beta \mbox{ is $i$-negative for some $1 \leq i \leq n-1$} \}.
\]

Note that this definition is slightly different than the typical definition of the Dehornoy ordering, where one would ask that there are no occurrences of $\sigma_j^{\pm 1}$ with $j<i$ \cite{DDRW}.   Our reason for doing this is as follows.  Recall that for $m,n \in \mathbb{N}$ and $m<n$, there is an embedding $i:B_m \rightarrow B_n$ given by $i(\sigma_j) = \sigma_j$ for all $1 \leq j \leq m-1$.  For ease of notation, we will write $B_m$ in place of $i(B_m)$.

\begin{proposition}\cite[Chapter II, Proposition 3.16]{DDRW}
Given $n, m >1$ with $m<n$, the subgroup $B_m$ is convex relative to the Dehornoy ordering of $B_n$.
\end{proposition}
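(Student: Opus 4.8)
The plan is to prove convexity of $B_m$ in $(B_n, <_D)$ directly from the combinatorial definition of the Dehornoy ordering, using the \emph{subword property} encoded by $i$-positivity. Recall that with the convention adopted in this paper, $B_m \leq B_n$ is generated by $\sigma_1, \dots, \sigma_{m-1}$, and a braid $\beta$ is $i$-positive exactly when it admits a word in which the highest-index generator appearing is $\sigma_i$ and it appears only with positive exponents. The key observation is that an element $\gamma \in B_n$ lies \emph{outside} $B_m$ precisely when every representative word for $\gamma$ must contain some occurrence of a generator $\sigma_j$ with $j \geq m$.

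First I would characterise $B_m$ in terms of the $i$-positivity data: I claim that $\gamma \in B_m$ if and only if $\gamma$ is neither $i$-positive nor $i$-negative for any $i \geq m$. Indeed, if $\gamma \in B_m$ then $\gamma$ has a representative word using only $\sigma_1, \dots, \sigma_{m-1}$, so no $\sigma_j^{\pm 1}$ with $j \geq m$ occurs, and by the defining property of the Dehornoy ordering (that $i$-positivity/negativity is a well-defined invariant of the braid) $\gamma$ cannot be $i$-positive or $i$-negative for any $i \geq m$. Conversely, the nontrivial direction relies on the fundamental acyclicity / property~$\mathrm{A}$ of the Dehornoy ordering, which guarantees that every nontrivial braid is either $i$-positive or $i$-negative for a \emph{unique} $i$; if this unique index satisfies $i < m$, one uses the structure theory (e.g. \cite[Chapter I]{DDRW}) to conclude $\gamma \in B_m$. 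This equivalence is the crucial input and I would cite it rather than reprove it.

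With this characterisation in hand, the convexity argument is short. Suppose $c, d \in B_m$ and $\gamma \in B_n$ with $c <_D \gamma <_D d$; I must show $\gamma \in B_m$. Translating to positive cones, $c^{-1}\gamma \in P_D \cup \{id\}$ and $\gamma^{-1}d \in P_D \cup \{id\}$. The plan is to argue that if $\gamma \notin B_m$, then $\gamma$ is $i$-positive or $i$-negative for some $i \geq m$, and then to show this forces the comparison with the elements $c, d \in B_m$ to go the ``wrong way.'' Concretely, since $c \in B_m$, the product $c^{-1}\gamma$ inherits the highest-index generator of $\gamma$, so $c^{-1}\gamma$ is $i$-positive or $i$-negative for the same $i \geq m$; the same holds for $\gamma^{-1} d$. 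One then checks that $c^{-1}\gamma$ and $\gamma^{-1}d$ carry \emph{opposite} signs at level $i$ (one $i$-positive, the other $i$-negative), which is incompatible with both lying in $P_D$ (or one being trivial). This contradiction yields $\gamma \in B_m$.

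The main obstacle I anticipate is the bookkeeping in the step asserting that multiplication by an element of $B_m$ preserves the top-level $i$-positivity data of $\gamma$ when $i \geq m$: one must verify that no cancellation at level $\geq m$ can occur, since the $B_m$-factor contributes no generators $\sigma_j$ with $j \geq m$. This is morally the statement that the ``$\sigma_i$-exponent sum at the top level'' behaves additively and that $B_m$ acts trivially on indices $\geq m$, but making it rigorous requires the well-definedness of the $i$-positive/$i$-negative dichotomy under the group operation, which is exactly where the deep properties of the Dehornoy ordering enter. I would isolate this as the technical heart and lean on the established structure theory in \cite{DDRW} to avoid reproving acyclicity from scratch.
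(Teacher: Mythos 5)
Your proposal is correct and takes essentially the same route as the paper's proof: the engine in both is the observation that concatenating a word in $\sigma_1, \dots, \sigma_{m-1}$ with an $i$-positive (or $i$-negative) word for $i \geq m$ yields a word of the same sign at level $i$, which then contradicts the ordering hypothesis via Dehornoy's dichotomy. The only differences are cosmetic: the paper first reduces by left-invariance to $id <_D \alpha <_D \beta$ with $\beta \in B_m$ and uses a single product $\alpha^{-1}\beta$, whereas you keep both inequalities and check the two products $c^{-1}\gamma$ and $\gamma^{-1}d$; also, the step you flag as the technical heart (that multiplying by a $B_m$-element preserves the level-$i$ sign) is immediate at the level of words, the genuinely deep input being only that $P_D$ is a positive cone, i.e., acyclicity, which both you and the paper rightly take from \cite{DDRW}.
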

\begin{proof} Let $<_D$ be the ordering corresponding to $P_D$.
Without loss of generality assume that $id <_D \alpha <_D \beta$ for some $\alpha \in B_n$ and $\beta\in B_m$. Hence $\alpha$ is $i$-positive and $\beta$ is $j$-positive for some $i\in\{1,\dots, n-1\}$ and $j\in \{1,\dots, m-1\}$.  

Assume $i> m-1$, we will arrive at a contradiction.  Let $w$ be an $i$-positive representative word for $\alpha$, and $v$ a $j$-positive representative word for $\beta$.  Then $w^{-1}v$ is a representative word for $\alpha^{-1} \beta$, which is $i$-negative since it contains only occurrences of $\sigma_i^{-1}$ while containing no occurrences of $\sigma_k$ for $k>i$.  But then $\alpha^{-1} \beta \in P_D^{-1}$, a contradiction to $\alpha <_D \beta$.
\end{proof}

For ease of exposition, we single out a particular subset of the positive cone of the Dehornoy ordering.  We set
$$P_D^{rel} = \{ \beta \in B_n \mid \mbox{ $\beta$ is $(n-1)$-positive} \} \subset B_n,$$
note that the definition of $P_D^{rel}$ depends on the index $n$, so we will always take care to specify the index when we are working with this subset.

\begin{proposition}
\label{PDrel}
The subset $P_D^{rel} \subset B_n$ and its inverse $(P_D^{rel})^{-1}$ are positive cones relative to $B_{n-1}$.
\end{proposition}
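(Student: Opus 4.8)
The plan is to verify directly the three defining conditions (i)--(iii) of a positive cone relative to the proper subgroup $B_{n-1}$, as listed just before Definition~\ref{relative def}, and then to deduce the statement for $(P_D^{rel})^{-1}$ from that for $P_D^{rel}$. Indeed, once $P_D^{rel}$ is known to be a positive cone relative to $B_{n-1}$, the identity $\bigl(B_{n-1}\,(P_D^{rel})^{-1}\,B_{n-1}\bigr)^{-1} = B_{n-1}\,P_D^{rel}\,B_{n-1}$, valid because $B_{n-1}$ is a subgroup, transports conditions (i)--(iii) to $(P_D^{rel})^{-1}$ at once. Condition (i) is immediate, since $\sigma_{n-1} \in P_D^{rel}$ shows $P_D^{rel} \neq \{\id\}$.

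For condition (ii), I would argue at the level of representative words. An element of $B_{n-1}$ admits a representative word involving only $\sigma_1, \dots, \sigma_{n-2}$, hence a $\sigma_{n-1}$-free one. Given $a, b \in B_{n-1}$ with $\sigma_{n-1}$-free representatives $u, u'$ and $\beta \in P_D^{rel}$ with an $(n-1)$-positive representative $w$, the concatenation $u w u'$ represents $a\beta b$; since every occurrence of $\sigma_{n-1}$ in $u w u'$ comes from $w$, where such occurrences are positive and at least one is present, the word $uwu'$ is again $(n-1)$-positive. This gives $B_{n-1}\,P_D^{rel}\,B_{n-1}\subset P_D^{rel}$.

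Condition (iii), the decomposition $B_n = B_{n-1}\sqcup P_D^{rel}\sqcup (P_D^{rel})^{-1}$, splits into exhaustion and disjointness. For exhaustion I would invoke Dehornoy's theorem, which gives $B_n = \{\id\}\sqcup P_D\sqcup P_D^{-1}$: any $\beta \in P_D$ is $i$-positive for some $i$, and if $i = n-1$ then $\beta \in P_D^{rel}$, whereas if $i \le n-2$ the $i$-positive representative is $\sigma_{n-1}$-free, so $\beta \in B_{n-1}$. The case $\beta \in P_D^{-1}$ is symmetric and $\id \in B_{n-1}$, so the three sets cover $B_n$. For disjointness, $P_D^{rel}\cap (P_D^{rel})^{-1}\subset P_D\cap P_D^{-1}=\emptyset$; it then remains to check $P_D^{rel}\cap B_{n-1}=\emptyset$ (from which $(P_D^{rel})^{-1}\cap B_{n-1}=\emptyset$ follows by taking inverses inside the subgroup $B_{n-1}$). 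Here, if some $\beta$ were simultaneously $(n-1)$-positive, via a word $w$, and an element of $B_{n-1}$, via a $\sigma_{n-1}$-free word $v$, then $w v^{-1}$ would be an $(n-1)$-positive word representing $\id$, forcing $\id \in P_D$ and contradicting that $P_D$ is a positive cone.

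The only genuinely substantive input is this last contradiction, namely the disjointness $P_D^{rel}\cap B_{n-1}=\emptyset$: it rests on the fact that an $(n-1)$-positive braid is never trivial, which is exactly the acyclicity underlying Dehornoy's theorem that $P_D$ is a positive cone. Since that theorem is already available to us, I expect the remaining verifications to be purely formal manipulations of representative words, with no further obstacle.
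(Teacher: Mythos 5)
Your proof is correct and follows essentially the same route as the paper's: a direct verification of conditions (i)--(iii), with the identical choice of $\sigma_{n-1}$ for (i), the same word-concatenation argument for (ii), and the same appeal to Dehornoy's trichotomy for the exhaustion part of (iii). The only difference is that you give an explicit argument for the disjointness $P_D^{rel} \cap B_{n-1} = \emptyset$ (concatenating an $(n-1)$-positive word with a $\sigma_{n-1}$-free word to show $\id$ would otherwise lie in $P_D$), a point the paper passes over as holding ``by definition.''
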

\begin{proof}
We verify the three conditions in the definition of a relative positive cone.

$(i)$ Since $\sigma_{n-1}\in P_D^{rel}$ we know $P_D^{rel}\neq \{id\}$.

$(ii)$ Let $\alpha, \beta \in B_{n-1}$ and $\gamma\in P_D^{rel}$. Then $\gamma$ is $(n-1)$-positive and $\alpha, \beta$ are represented by words which contain no occurrences of $\sigma_{n-1}$, which implies that $\alpha\gamma\beta$ is also $(n-1)$-positive. 

$(iii)$ Since $P_D$ is the positive cone of a left ordering and $P_D^{rel} \subset P_D$, we conclude that $P_D^{rel}$ and $(P_D^{rel})^{-1}$ are disjoint.  Neither contains elements of $B_{n-1} \subset B_n$, by definition.  This shows that $B_{n-1} \cup P_D^{rel} \cup(P_D^{rel})^{-1}$ is a disjoint union; that it is equal to $B_n$ follows from the fact that every element of $B_n$ is $i$-positive for some $i \in \{1, \ldots, n-1\}$, and those that are $i$-positive for $i \in \{1, \ldots, n-2\}$ are the elements of $B_{n-1}$.
\end{proof}

In preparation for the next lemma, we introduce another canonical ordering of $B_n$, known as the Dubrovina-Dubrovin ordering.  We define its positive cone, denoted $P^n_{DD} \subset B_n$, inductively. 

For $B_2 = \langle \sigma_1 \rangle \cong \mathbb{Z}$, set $P^2_{DD} = \{ \sigma_1^k \mid k>1 \}$.  Now, assuming that we have defined $P^{n-1}_{DD} \subset B_{n-1}$, define $P^{n}_{DD}$ according to the rule:
\[ P^{n}_{DD} = P^{n-1}_{DD} \cup (P_D^{rel})^{(-1)^n},
\]
where $P_D^{rel} \subset B_n$.  Note that this defines a positive cone by Proposition \ref{PDrel}.  This positive cone is of interest as it is an isolated point in $\mathrm{LO}(B_n)$, specifically:

\begin{theorem}[\cite{DD}]
\label{DDtheorem}
If $Q \subset B_n$ is a positive cone containing the braids 
\[ \sigma_1, (\sigma_1 \sigma_2)^{-1}, (\sigma_1 \sigma_2 \sigma_3), \dots, (\sigma_1 \dots \sigma_{n-1})^{(-1)^n}
\]
then $Q = P^n_{DD}$.
\end{theorem}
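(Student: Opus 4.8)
The plan is to reduce the statement to a single semigroup containment and then prove that by induction on $n$. First I record the elementary fact that makes the ``containing the generators forces equality'' phenomenon work: if $P \subseteq P'$ are two positive cones of a group, then $P = P'$, since any $g \in P' \setminus P$ would have $g^{-1} \in P \subseteq P'$, contradicting $P' \cap (P')^{-1} = \emptyset$. Writing $a_i = (\sigma_1\cdots\sigma_i)^{(-1)^{i+1}}$ for the $n-1$ listed braids, it therefore suffices to prove $P^n_{DD}\subseteq Q$. Since $Q$ is closed under multiplication and contains each $a_i$, it contains the subsemigroup $S = \langle a_1,\dots,a_{n-1}\rangle^+$ that they generate; and since each $a_i$ lies in $P^n_{DD}$ (the braid $\sigma_1\cdots\sigma_i$ is $i$-positive, so $a_i$ is $i$-positive for $i$ odd and $i$-negative for $i$ even, matching the sign rule built into the inductive definition of $P^n_{DD}$), we have $S \subseteq P^n_{DD}$. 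The whole theorem thus reduces to the identity $S = P^n_{DD}$, i.e. to the single inclusion $P^n_{DD}\subseteq S$.

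I would prove $P^n_{DD}\subseteq S$ by induction on $n$, the base case $B_2 \cong \mathbb{Z}$ being immediate. For the inductive step I use the defining decomposition $P^n_{DD} = P^{n-1}_{DD}\cup (P_D^{rel})^{(-1)^n}$ and treat a braid $\beta \in P^n_{DD}$ according to which piece it occupies. If $\beta \in P^{n-1}_{DD}$, then $\beta$ is a word in $\sigma_1,\dots,\sigma_{n-2}$, and the inductive hypothesis, applied inside $B_{n-1}$ with generators $a_1,\dots,a_{n-2}$, already expresses $\beta$ as a positive product of these, so $\beta \in S$. The whole problem therefore concentrates on the remaining case, where $\beta$ lies in $(P_D^{rel})^{(-1)^n}$, i.e. $\beta$ is $(n-1)$-positive (if $n$ is even) or $(n-1)$-negative (if $n$ is odd).

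For this top case the key algebraic identity expresses $\sigma_{n-1}^{\pm 1}$ through $a_{n-1}$ and a factor in $B_{n-1}$: writing $\delta_i = \sigma_1\cdots\sigma_i$ so that $a_i = \delta_i^{(-1)^{i+1}}$, one has $\sigma_{n-1} = \delta_{n-2}^{-1}a_{n-1}$ when $n$ is even (here $a_{n-1}=\delta_{n-1}$) and $\sigma_{n-1}^{-1} = a_{n-1}\delta_{n-2}$ when $n$ is odd (here $a_{n-1}=\delta_{n-1}^{-1}$), with $\delta_{n-2}\in B_{n-1}$ in both cases. Substituting this into an $(n-1)$-signed representative word $\beta = w_0\,\sigma_{n-1}^{\pm 1}\,w_1\cdots \sigma_{n-1}^{\pm 1}\,w_k$ with $w_j \in B_{n-1}$ rewrites $\beta$ as an alternating product $u_0\,a_{n-1}\,u_1\,a_{n-1}\cdots a_{n-1}\,u_k$ with each $u_j \in B_{n-1}$, and if every $u_j$ were trivial or lay in $P^{n-1}_{DD}$ we could finish by the inductive hypothesis. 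The main obstacle is precisely that these intermediate factors $u_j$ need not be positive in the $B_{n-1}$ ordering, and one cannot feed their inverses into $S$. Overcoming this is the technical heart of the argument: I would induct on the number $k$ of top-index letters, using the braid relations ($\sigma_{n-1}$ commutes with $\sigma_1,\dots,\sigma_{n-3}$ and satisfies $\sigma_{n-1}\sigma_{n-2}\sigma_{n-1}=\sigma_{n-2}\sigma_{n-1}\sigma_{n-2}$) to push a maximal negative $B_{n-1}$-block past an adjacent $a_{n-1}$, thereby either cancelling a top-index letter (lowering $k$) or converting the block into a positive one. This rewriting step---showing that any genuinely $(n-1)$-signed element of $P^n_{DD}$ can be massaged into a form all of whose $B_{n-1}$-factors are nonnegative---is where the specific choice of generators $a_i$ is essential, and it is the part that requires the most care.
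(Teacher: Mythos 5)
Your preliminary reductions are all correct: nested positive cones coincide, each $a_i=(\sigma_1\cdots\sigma_i)^{(-1)^{i+1}}$ lies in $P^n_{DD}$, the theorem is therefore equivalent to the single inclusion $P^n_{DD}\subseteq S$, and the case $\beta\in P^{n-1}_{DD}$ of your induction follows from the inductive hypothesis. But everything up to that point is the routine half of the statement. The remaining case---that every braid in $(P_D^{rel})^{(-1)^n}$ lies in the semigroup $S$---is not a technical loose end to be ``massaged''; it is the entire content of the theorem, namely Dubrovina and Dubrovin's result that their positive cone is finitely generated as a semigroup, transported here by the automorphism $\phi(\sigma_i)=(\sigma_{n-i})^{(-1)^n}$. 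Note that the paper does not prove this statement at all: the theorem carries the citation \cite{DD}, and the remark following it explains only how the two formulations correspond. So a blind proof must reprove the main theorem of \cite{DD}, and your proposal stops exactly where that proof begins, with the key rewriting step stated as an intention (``I would induct\dots'') rather than carried out.

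Moreover, the rewriting you sketch would not work as described. After your substitution, every occurrence of $a_{n-1}$ in $u_0a_{n-1}u_1\cdots a_{n-1}u_k$ has exponent $+1$, and no $u_j\in B_{n-1}$ can supply an $a_{n-1}^{-1}$, so ``cancelling a top-index letter'' is not an available move; the count $k$ can only change through braid relations such as $\sigma_{n-1}\sigma_{n-2}\sigma_{n-1}=\sigma_{n-2}\sigma_{n-1}\sigma_{n-2}$, for which you give no controlled procedure or termination argument. Likewise, ``pushing a negative $B_{n-1}$-block past an adjacent $a_{n-1}$'' replaces that block by its conjugate under $\sigma_1\cdots\sigma_{n-1}$, and this conjugation shifts generator indices upward (for instance it carries $\sigma_{n-2}$ to $\sigma_{n-1}$), so the conjugated block generally leaves $B_{n-1}$ and destroys the alternating form on which your induction is predicated. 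The arguments that do succeed rest on more structure than the braid relations alone: besides your substitution identity (which is $\sigma_{n-1}=a_{n-2}a_{n-1}$ in disguise when $n$ is even, with a mirror identity for $n$ odd), one needs that $\sigma_{n-1}$ commutes with $a_1,\dots,a_{n-3}$, a multi-parameter induction in which each factor $u_j$ is further decomposed according to its sign in $P^{n-1}_{DD}$, and ultimately Dehornoy's trichotomy for $\sigma$-positive braids. Since none of this is supplied, the essential half of the theorem remains unproven.
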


We remark that this definition of the positive cone differs from that in \cite{DD}.  In the original source, the positive cone $P^n_{DD}$ is defined to be the subsemigroup of $B_n$ generated by 
\[ (\sigma_1 \sigma_2 \dots \sigma_{n-1}), (\sigma_1 \sigma_2 \dots \sigma_{n-2})^{-1} ,\dots, (\sigma_{n-1})^{(-1)^n},
\]
we have altered the definition here for ease of exposition, so that the convex subgroups agree with those arising from our definition of $P_D$.  One can see that our definition of $P^n_{DD}$ is equivalent to that of \cite{DD} by noting that our $P^n_{DD}$ is the image of Dubrovina and Dubrovin's positive cone under the automorphism $\phi : B_n \rightarrow B_n$ given by $\phi(\sigma_i) = (\sigma_{n-i})^{(-1)^n}$.

\begin{lemma}
\label{relative rigidity}
Suppose that $Q \subset B_n$ is a positive cone relative to $B_{n-1}$.  Then $Q \in \{P_D^{rel}, (P_D^{rel})^{-1} \}$.
\end{lemma}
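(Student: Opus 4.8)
The plan is to show that, after possibly replacing $Q$ by $Q^{-1}$, we have $P_D^{rel} \subseteq Q$, and then to upgrade this inclusion to an equality. For the reduction, note that $\sigma_{n-1} \notin B_{n-1}$, so by condition (iii) in the definition of a relative positive cone exactly one of $\sigma_{n-1} \in Q$ or $\sigma_{n-1} \in Q^{-1}$ holds; since $Q^{-1}$ is again a positive cone relative to $B_{n-1}$, I may assume $\sigma_{n-1} \in Q$ and aim to prove $Q = P_D^{rel}$. Granting $P_D^{rel} \subseteq Q$, the equality follows formally: applying inverses gives $(P_D^{rel})^{-1} \subseteq Q^{-1}$, and since both $\{P_D^{rel}, (P_D^{rel})^{-1}\}$ (by Proposition \ref{PDrel}) and $\{Q, Q^{-1}\}$ partition $B_n \setminus B_{n-1}$, any element of $Q \setminus P_D^{rel}$ would lie in $(P_D^{rel})^{-1} \cap Q \subseteq Q^{-1} \cap Q = \emptyset$, a contradiction.

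The crux is an observation that upgrades the one-sided closure property (ii), namely $B_{n-1} Q B_{n-1} \subseteq Q$, to genuine multiplicative closure $Q \cdot Q \subseteq Q$. I would deduce this from the left-invariance of the associated ordering $<$ on the coset space $B_n/B_{n-1}$ (the correspondence recorded after Definition \ref{relative def}): writing $o$ for the coset $B_{n-1}$, one has $g \in Q$ iff $o < g \cdot o$, so if $g, h \in Q$ then $o < h \cdot o$ yields $g \cdot o < gh \cdot o$ by invariance, and combining with $o < g \cdot o$ gives $o < gh \cdot o$, i.e. $gh \in Q$. This is exactly the point where I use that $Q$ records a \emph{total order} on cosets rather than merely a bi-$B_{n-1}$-invariant subset; condition (ii) by itself cannot see products such as $\sigma_{n-1} w \sigma_{n-1}$ with $w \in B_{n-1}$.

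With $Q \cdot Q \subseteq Q$ in hand, I would finish by an induction showing every $(n-1)$-positive braid lies in $Q$. Any $\beta \in P_D^{rel}$ has a representative word of the form $w_0 \sigma_{n-1} w_1 \sigma_{n-1} \cdots \sigma_{n-1} w_k$ with each $w_i \in B_{n-1}$ and $k \geq 1$; by (ii) the outer factors $w_0, w_k$ are harmless, so it suffices to show the alternating products $\sigma_{n-1} w_1 \sigma_{n-1} \cdots w_{k-1} \sigma_{n-1}$ lie in $Q$. This follows by induction on $k$: the base case $k = 1$ is the assumption $\sigma_{n-1} \in Q$, and the inductive step factors such a product as $\sigma_{n-1} \cdot \bigl(w_1 \cdot (\text{shorter alternating product})\bigr)$, where the shorter product is in $Q$ by induction, left multiplication by $w_1 \in B_{n-1}$ keeps it in $Q$ by (ii), and left multiplication by $\sigma_{n-1} \in Q$ keeps it in $Q$ by $Q \cdot Q \subseteq Q$. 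Hence $P_D^{rel} \subseteq Q$, completing the argument. The only genuine obstacle is the multiplicative-closure step; once it is established, the remainder is a bookkeeping induction on the number of occurrences of $\sigma_{n-1}$ in a defining word.
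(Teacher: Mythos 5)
Your proof is correct, but it takes a genuinely different route from the paper. The paper's proof never touches the word-level definition of $P_D^{rel}$ beyond what Proposition \ref{PDrel} provides: after choosing $\epsilon$ so that $(\sigma_1\cdots\sigma_{n-1})^{(-1)^n} \in Q^{\epsilon}$, it forms the lexicographic positive cone $Q' = P^{n-1}_{DD} \cup Q^{\epsilon}$ on all of $B_n$ and invokes the Dubrovina--Dubrovin rigidity result (Theorem \ref{DDtheorem}): since $Q'$ contains the braids $\sigma_1, (\sigma_1\sigma_2)^{-1}, \dots, (\sigma_1\cdots\sigma_{n-1})^{(-1)^n}$, it must equal $P^n_{DD}$, and intersecting with $B_n \setminus B_{n-1}$ gives $Q^{\epsilon} = (P_D^{rel})^{(-1)^n}$. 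You instead normalise by $\sigma_{n-1}$ rather than by $\delta_n = \sigma_1\cdots\sigma_{n-1}$, extract multiplicative closure $Q \cdot Q \subseteq Q$ from the coset-order correspondence of Definition \ref{relative def}, and run a direct induction on the number of occurrences of $\sigma_{n-1}$ in an $(n-1)$-positive word; the only nontrivial external input you need is the trichotomy $B_n = B_{n-1} \sqcup P_D^{rel} \sqcup (P_D^{rel})^{-1}$ from Proposition \ref{PDrel}, which encodes Dehornoy's theorem. What your approach buys is self-containedness: it bypasses Theorem \ref{DDtheorem} entirely, and it makes explicit the semigroup property of relative cones that the paper uses only implicitly (the paper's claim that $Q'$ is a positive cone of a left ordering also requires $Q^{\epsilon}Q^{\epsilon} \subseteq Q^{\epsilon}$, which does not follow from conditions (i)--(iii) alone and likewise rests on the correspondence with total orders on $B_n/B_{n-1}$ --- you are right to flag this as the crux). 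What the paper's approach buys is brevity and thematic coherence: the whole manuscript is organised around exploiting the isolated point $P^n_{DD}$, and the same Theorem \ref{DDtheorem} is reused immediately afterwards in Corollary \ref{maximal convex}, whereas your word-combinatorial argument is special to this one lemma.
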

\begin{proof}
Let $Q \subset B_n$ be a positive cone relative to $B_{n-1}$. Note that $\sigma_1 \sigma_2 \dots \sigma_{n-1} \notin B_{n-1}$, and so since $B_n = B_{n-1} \cup Q \cup Q^{-1}$, we can choose $\epsilon \in \{ \pm 1\}$ so that $(\sigma_1 \sigma_2 \dots \sigma_{n-1})^{(-1)^n} \in Q^{\epsilon}$.  Then 
$Q' = P^{n-1}_{DD} \cup Q^{\epsilon}
$
is the positive cone of a left ordering, and by our choice of $\epsilon$, $Q'$ contains $$\sigma_1, (\sigma_1 \sigma_2)^{-1}, (\sigma_1 \sigma_2 \sigma_3), \dots, (\sigma_1 \dots \sigma_{n-1})^{(-1)^n}.$$ This implies that $Q' = P^n_{DD}$ by  Theorem \ref{DDtheorem}.  In particular, this means that 
\[Q^{\epsilon} = Q' \cap (B_n \setminus B_{n-1}) = P^n_{DD} \cap (B_n \setminus B_{n-1}) = (P_D^{rel})^{(-1)^n}.
\]
 \end{proof}

\begin{corollary}
\label{maximal convex}
Suppose that $C$ is a relatively convex subgroup satisfying $B_{n-1} \subseteq C \subseteq B_n$.  Then $C = B_{n-1}$ or $C = B_n$.
\end{corollary}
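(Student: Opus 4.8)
The plan is to prove the statement by contradiction: assuming $C$ is relatively convex with $B_{n-1}\subseteq C\subseteq B_n$ and $C\neq B_n$, I would deduce $C=B_{n-1}$, so that suppose toward a contradiction $B_{n-1}\subsetneq C\subsetneq B_n$. The idea is to manufacture \emph{two distinct} positive cones of $B_n$ relative to $B_{n-1}$ and confront them with the two-point rigidity furnished by Lemma~\ref{relative rigidity}. First I would extract the two relative cones the hypotheses provide. Since $C$ is a proper relatively convex subgroup, the correspondence recalled in Section~\ref{Background} yields a positive cone $P$ of $B_n$ relative to $C$, so that $B_n=C\sqcup P\sqcup P^{-1}$ with $CPC\subseteq P$; moreover $P\neq\emptyset$, as $C\neq B_n$. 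Since $B_{n-1}$ is convex in the Dehornoy ordering of $B_n$ (as established above) and $B_{n-1}\subseteq C$, restricting the Dehornoy ordering to $C$ shows $B_{n-1}$ is convex in $C$; because we are assuming $B_{n-1}\subsetneq C$, this produces a nonempty positive cone $P_1$ of $C$ relative to $B_{n-1}$. The crucial observation is that $P_1^{-1}$ is then also a positive cone of $C$ relative to $B_{n-1}$, and $P_1\neq P_1^{-1}$ (they are disjoint and nonempty).

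Next I would assemble these into cones on all of $B_n$ by setting $Q=P_1\cup P$ and $Q'=P_1^{-1}\cup P$. A routine check of axioms (i)--(iii) shows each is a positive cone of $B_n$ relative to $B_{n-1}$: exhaustion and disjointness come from the decomposition $B_n=(B_{n-1}\sqcup P_1\sqcup P_1^{-1})\sqcup P\sqcup P^{-1}$, while the condition $B_{n-1}QB_{n-1}\subseteq Q$ splits into the case $q\in P_1$ (using $B_{n-1}P_1B_{n-1}\subseteq P_1$) and the case $q\in P$ (using $B_{n-1}\subseteq C$ together with $CPC\subseteq P$). Lemma~\ref{relative rigidity} then forces both $Q,Q'\in\{P_D^{rel},(P_D^{rel})^{-1}\}$.

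The punchline is an inversion contradiction. Because $Q\cap C=P_1$ while $Q'\cap C=P_1^{-1}$ and $P_1\neq P_1^{-1}$, we have $Q\neq Q'$, so $\{Q,Q'\}$ exhausts the two-element set $\{P_D^{rel},(P_D^{rel})^{-1}\}$; in particular $Q'=Q^{-1}=P_1^{-1}\cup P^{-1}$. Comparing the two descriptions $Q'=P_1^{-1}\cup P=P_1^{-1}\cup P^{-1}$ and intersecting with $B_n\setminus C$ (which contains $P$ and $P^{-1}$ but is disjoint from $P_1^{-1}\subseteq C$) yields $P=P^{-1}$, which is impossible since $P\neq\emptyset$ and $P\cap P^{-1}=\emptyset$. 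Hence $B_{n-1}\subsetneq C\subsetneq B_n$ cannot occur, giving $C=B_{n-1}$ or $C=B_n$.

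I expect the only delicate point—more a matter to be careful about than a real obstacle—to be the legitimacy of gluing the cone $P_1$ arising from the Dehornoy ordering on $C$ to the cone $P$ arising from $C$'s convexity in a possibly unrelated ordering; this is harmless because the gluing uses only the set-theoretic axioms (i)--(iii) for relative cones, not any compatibility of the underlying orders. The genuinely clever ingredient is passing to $P_1^{-1}$ to obtain the second cone $Q'$: it is precisely the distinctness of $Q$ and $Q'$, combined with the fact that Lemma~\ref{relative rigidity} leaves only two possibilities, that forces $Q'=Q^{-1}$ and collapses the argument.
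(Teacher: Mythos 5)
Your proof is correct, and at its heart it uses the same mechanism as the paper's: decompose $B_n$ along the chain $B_{n-1}\subsetneq C\subsetneq B_n$, extract a relative cone on each layer, flip the middle one to manufacture two distinct cones, and collide this with Dubrovina--Dubrovin rigidity. The execution differs in a meaningful way, though. The paper takes its middle cone $R = P^n_{DD}\cap(C\setminus B_{n-1})$ from the Dubrovina--Dubrovin ordering, fills in the bottom layer with $P^{n-1}_{DD}$, and glues \emph{three} layers to obtain two distinct full positive cones $P^{n-1}_{DD}\cup R^{\pm 1}\cup Q$ in $\mathrm{LO}(B_n)$ (after a WLOG placing $(\sigma_1\cdots\sigma_{n-1})^{(-1)^n}$ in the top cone $Q$), both containing the Dubrovina--Dubrovin generators; this contradicts the uniqueness statement of Theorem \ref{DDtheorem} directly. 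You instead take the middle cone from the Dehornoy ordering, leave the bottom layer unfilled, and glue only \emph{two} layers to obtain two distinct relative cones $P_1\cup P$ and $P_1^{-1}\cup P$ with respect to $B_{n-1}$; Lemma \ref{relative rigidity} (rather than Theorem \ref{DDtheorem}) then forces them to be mutually inverse, and intersecting with $B_n\setminus C$ yields the clean contradiction $P=P^{-1}$. What your route buys is economy: no WLOG, no auxiliary choice of ordering on the bottom layer, and the rigidity input is used as a black box instead of re-running the generator argument; the cost is that you must verify the relative-cone axioms (i)--(iii) for the glued sets, whereas the paper's three-layer gluing stays inside $\mathrm{LO}(B_n)$ and leans on the lexicographic construction already set up in Section \ref{sec3}. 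Both verifications are routine, and your checks (invariance under $B_{n-1}$ on both sides, and the disjoint decomposition $B_n = B_{n-1}\sqcup P_1\sqcup P_1^{-1}\sqcup P\sqcup P^{-1}$) are exactly the right ones.
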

\begin{proof} 
Assume that $B_{n-1} \subset C \subset B_n$, with both containments proper. Choose a positive cone $P \in \mathrm{LO}(B_n)$ such that $C$ is convex relative to the induced ordering $<_P$.  Set $Q = P \cap (B_n\setminus C)$, so that $Q$ is a positive cone relative to $C$.  Since $B_{n-1} \subset C$ and $C \neq B_{n}$, we know that $(\sigma_1 \sigma_2 \dots \sigma_{n-1})^{(-1)^n} \in Q \cup Q^{-1}$.  Suppose $(\sigma_1 \sigma_2 \dots \sigma_{n-1})^{(-1)^n} \in Q,$ the case of $Q^{-1}$ being similar.

Next, set $R = P^{n}_{DD} \cap (C \setminus B_{n-1})$, which is a positive cone in $C$ relative to $B_{n-1}$, as is $R^{-1}$.  Now we observe that $P^{n-1}_{DD} \cup R \cup Q$ and $P^{n-1}_{DD} \cup R^{-1} \cup Q$ are distinct positive cones in $B_n$, each containing the elements 
\[ \sigma_1, (\sigma_1 \sigma_2)^{-1}, (\sigma_1 \sigma_2 \sigma_3), \dots, (\sigma_1 \dots \sigma_{n-1})^{(-1)^n}
\]
by construction.  This violates Theorem \ref{DDtheorem}, and so is a contradiction.
\end{proof}

Note that the embeddings $B_m \subset B_n$ for $m<n$ also allow us to think of $\Delta_m^2$ as an element of $B_n$ for all $m<n$.  We can apply the previous Corollary to analyse actions of $B_n$ by order-preserving automorphisms of an arbitrary totally-ordered set.

\begin{proposition}
\label{stab_prop}
Suppose that $B_n$ acts nontrivially (i.e. without global fixed points) on a totally ordered set $(\Omega, <)$ by order-preserving bijections. If $\Delta_{n-1}^2 \in \Stab(x_0)$ for some $x_0 \in \Omega$, then $\Stab(x_0) = B_{n-1}$.
\end{proposition}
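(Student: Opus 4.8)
The plan is to deduce the statement directly from Corollary \ref{maximal convex} applied to $C = \Stab(x_0)$. To invoke that corollary I must verify three things: that $\Stab(x_0)$ is relatively convex, that $B_{n-1} \subseteq \Stab(x_0)$, and that $\Stab(x_0) \neq B_n$. The last is immediate, since $\Stab(x_0) = B_n$ would make $x_0$ a global fixed point, contradicting nontriviality of the action. So the real work is the first two points, and the conjunction of them is where I expect the main content to lie.

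For relative convexity I would use the construction of $Q_{P,x_0}$ introduced at the start of this section. Since $B_n$ is left orderable, so is its subgroup $\Stab(x_0)$; fix any $P \in \mathrm{LO}(\Stab(x_0))$ and form the positive cone $Q = Q_{P,x_0} \in \mathrm{LO}(B_n)$. A short computation with order-preservation shows that $\Stab(x_0)$ is convex with respect to $<_Q$. Indeed, suppose $c <_Q g <_Q d$ with $c,d \in \Stab(x_0)$ but $g \notin \Stab(x_0)$. Then $c^{-1}g \in Q$ and $c^{-1}g \notin \Stab(x_0)$, so by definition $c^{-1}g \cdot x_0 > x_0$; applying the order-preserving bijection $c$ (which fixes $x_0$) gives $g \cdot x_0 > x_0$. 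Likewise $g^{-1}d \in Q$ and $g^{-1}d \notin \Stab(x_0)$ give $g^{-1} \cdot x_0 > x_0$, and applying $g$ yields $g \cdot x_0 < x_0$, a contradiction. Hence $g \in \Stab(x_0)$, so $\Stab(x_0)$ is $<_Q$-convex and therefore relatively convex.

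The inclusion $B_{n-1} \subseteq \Stab(x_0)$ is where I expect the crux to be, and it is exactly here that the hypothesis $\Delta_{n-1}^2 \in \Stab(x_0)$ enters. I would restrict $<_Q$ to $B_{n-1}$, obtaining a left ordering of $B_{n-1}$ for which $\Stab(x_0) \cap B_{n-1}$ is a convex subgroup (the intersection of a convex subgroup with any subgroup is convex in the restricted order). Now $\Delta_{n-1}^2$ generates the centre of $B_{n-1}$ and, just as $\Delta_n^2$ is absolutely cofinal in $B_n$, it is absolutely cofinal in $B_{n-1}$; in particular it is cofinal with respect to the restricted ordering. Since $\Delta_{n-1}^2 \in \Stab(x_0) \cap B_{n-1}$ and a convex subgroup containing a cofinal element must be the whole group (for any $h \in B_{n-1}$ one has $(\Delta_{n-1}^2)^{-k} < h < (\Delta_{n-1}^2)^k$ for some $k$, and both bounds lie in the convex subgroup), we conclude $\Stab(x_0) \cap B_{n-1} = B_{n-1}$, i.e. $B_{n-1} \subseteq \Stab(x_0)$. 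With all three hypotheses in hand, Corollary \ref{maximal convex} forces $\Stab(x_0) = B_{n-1}$, since $\Stab(x_0) \neq B_n$. The main obstacle is bookkeeping the interplay between convexity of the stabilizer and cofinality of the central element, together with ensuring the stabilizer really is convex in a genuine left ordering of $B_n$; once the $Q_{P,x_0}$ construction supplies such an ordering, the rest is a clean application of the cofinality/convexity principle and the corollary.
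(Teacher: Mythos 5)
Your proof is correct and takes essentially the same route as the paper's: both equip $B_n$ with the left ordering $Q_{P,x_0}$ arising from the action so that $\Stab(x_0)$ is convex, both use the fact that $\Delta_{n-1}^2$ is cofinal in every left ordering of $B_{n-1}$ to force $B_{n-1} \subseteq \Stab(x_0)$, and both conclude via Corollary \ref{maximal convex} together with the absence of global fixed points. The only cosmetic difference is that the paper runs the cofinality step as a direct contradiction (an element $\alpha \in B_{n-1}\setminus\Stab(x_0)$ would dominate every power of $\Delta_{n-1}^2$), whereas you phrase it as ``a convex subgroup containing a cofinal element is the whole group''---these are the same argument.
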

\begin{proof}
Assuming $\Delta_{n-1}^2 \in \Stab(x_0)$ for some $x_0 \in \Omega$, equip $B_n$ with a left ordering $\prec$ relative to which $\Stab(x_0)$ is convex as follows. Choose $Q \in \mathrm{LO}(\Stab(x_0))$, without loss of generality we may assume that $\Delta_{n-1}^2 \in Q$, then declare $id \prec \beta$ if and only if $\beta \cdot x_0 > x_0$ or $\beta \in \Stab(x_0)$ and $\beta \in Q$.  

Now observe that if $\alpha \in B_{n-1} \setminus \Stab(x_0)$ and $id \prec \alpha$, then $id \prec \Delta_{n-1}^{2k} \prec \alpha$ for all $k >0$ since $\Delta_{n-1}^2$ is contained in the $\prec$-convex subgroup $\Stab(x_0)$, while $\alpha$ is not.  This contradicts the fact that $\Delta_{n-1}^2$ is cofinal in every left ordering of $B_{n-1}$, so we conclude $B_{n-1} \subset \Stab(x_0)$.

Thus $B_{n-1} \subset \Stab(x_0) \subset B_n$.  Since we are assuming the second containment is proper (the action is without global fixed points), Corollary \ref{maximal convex} implies $B_{n-1} = \Stab(x_0)$.
\end{proof}

\begin{theorem}
\label{main_theorem}
Suppose that $P \in \mathrm{LO}(B_n)$ and $\Delta_{n-1}^2$ is not cofinal in the left ordering of $B_n$ determined by $P$.  Then:
\begin{enumerate}
\item If $\Delta_n^2 \in P$, then there exists $Q \in \mathrm{LO}(B_{n-1})$ such that $$P_D^{rel} \cup Q \in \overline{\{\beta P \beta^{-1} \mid \beta \in B_n \}},$$
\item If $\Delta_n^2 \in P^{-1}$, then  there exists $Q \in \mathrm{LO}(B_{n-1})$ such that $$(P_D^{rel})^{-1} \cup Q \in \overline{\{\beta P \beta^{-1} \mid \beta \in B_n \}}.$$
\end{enumerate}
\end{theorem}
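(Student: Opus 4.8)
The plan is to realise the ordering dynamically and extract a fixed point of $\Delta_{n-1}^2$ that is an accumulation point of the orbit of the identity, then feed it into Propositions~\ref{stab_prop} and~\ref{conv seq}. First I would fix a tight embedding $t\colon (B_n,<_P)\to(\mathbb{R},<)$, form the dynamic realisation $\rho_P\colon B_n\to\mathrm{Homeo}_+(\mathbb{R})$, and set $\Omega=\overline{t(B_n)}$, on which $B_n$ acts by order-preserving bijections (restrictions of homeomorphisms) and without global fixed point, since $\rho_P(g)(t(id))=t(g)\neq t(id)$ for $g\neq id$. The orbit $\{t(\Delta_{n-1}^{2k})\}_{k}$ is monotone, and because $\Delta_{n-1}^2$ is not cofinal it is bounded on one side; the corresponding supremum or infimum $x_0\in\Omega$ is then fixed by $\rho_P(\Delta_{n-1}^2)$ (by continuity and monotonicity) and is an accumulation point of a monotone sequence of orbit points. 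As $\Delta_{n-1}^2\in\Stab(x_0)$, Proposition~\ref{stab_prop} gives $\Stab(x_0)=B_{n-1}$.

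Next I would identify $\mathfrak{O}_S$ for $S=t(B_n)$ with the conjugacy orbit of $P$. Each point of $S$ has the form $t(g)=g\cdot t(id)$ and hence trivial stabiliser, so $\mathfrak{O}_{t(g)}$ consists of the single cone $\{h: h\cdot t(g)>t(g)\}$; unwinding the definition via left-invariance of $<_P$, the inequality $h\cdot t(g)>t(g)$ reads $g^{-1}hg\in P$, so this cone equals $gPg^{-1}$. Therefore $\mathfrak{O}_S=\{\beta P\beta^{-1}:\beta\in B_n\}$ and $\overline{\mathfrak{O}_S}=\overline{\{\beta P\beta^{-1}:\beta\in B_n\}}$. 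Taking a monotone sequence from $\{t(\Delta_{n-1}^{2k})\}$ converging to $x_0$ in the order topology, Proposition~\ref{conv seq} produces some $R\in\mathrm{LO}(\Stab(x_0))=\mathrm{LO}(B_{n-1})$ with $Q_{R,x_0}\in\overline{\{\beta P\beta^{-1}\}}$.

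By construction $Q_{R,x_0}=\widetilde{Q}\cup R$, where $\widetilde{Q}=\{g\in B_n: g\cdot x_0>x_0\}$ and $R\subseteq B_{n-1}=\Stab(x_0)$. Since $\Stab(x_0)=B_{n-1}$, a routine verification of conditions (i)--(iii) (nonemptiness, $B_{n-1}\widetilde{Q}B_{n-1}\subseteq\widetilde{Q}$ by order-preservation, and the trichotomy $B_n=B_{n-1}\sqcup\widetilde{Q}\sqcup\widetilde{Q}^{-1}$) shows $\widetilde{Q}$ is a positive cone relative to $B_{n-1}$. Lemma~\ref{relative rigidity} then forces $\widetilde{Q}\in\{P_D^{rel},(P_D^{rel})^{-1}\}$, so the cone produced above is either $P_D^{rel}\cup R$ or $(P_D^{rel})^{-1}\cup R$.

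Finally I would pin down the sign using $\Delta_n^2$. Because $\Delta_n^2$ is central and absolutely cofinal, $U_{\Delta_n^2}$ and $U_{\Delta_n^{-2}}$ form a separation of $\mathrm{LO}(B_n)$ into clopen pieces, so the entire conjugacy orbit of $P$ and its closure lie in whichever piece contains $P$. Since $\Delta_n^2=(\sigma_1\cdots\sigma_{n-1})^n$ is $(n-1)$-positive we have $\Delta_n^2\in P_D^{rel}$ and $\Delta_n^2\notin(P_D^{rel})^{-1}$; as $\Delta_n^2\notin B_{n-1}$, the case $\Delta_n^2\in P$ forces $\widetilde{Q}=P_D^{rel}$ and the case $\Delta_n^2\in P^{-1}$ forces $\widetilde{Q}=(P_D^{rel})^{-1}$. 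Setting $Q=R$ then yields the two claims. I expect the main obstacle to be the first paragraph: carefully extracting from non-cofinality a fixed point of $\rho_P(\Delta_{n-1}^2)$ that is a genuine limit of orbit points in the order topology of $\Omega$, so that Proposition~\ref{conv seq} applies, rather than any difficulty in the rigidity step, which is essentially immediate once $\widetilde{Q}$ is recognised as a relative positive cone.
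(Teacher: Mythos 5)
Your proposal is correct and takes essentially the same route as the paper: the paper likewise feeds a limit of the points corresponding to $\Delta_{n-1}^{2k}$ into Proposition \ref{conv seq}, identifies the stabiliser of the limit point as $B_{n-1}$ via Proposition \ref{stab_prop}, pins down the relative cone with Lemma \ref{relative rigidity}, and fixes the sign using centrality of $\Delta_n^2$; the only difference is cosmetic, in that the paper realises the argument inside the abstract totally ordered set of $<_P$-downward-closed subsets of $B_n$ acted on by left multiplication (with $s_0$ playing the role of your $x_0$ and the principal cuts $t_\beta$ the role of your $t(\beta)$), rather than inside $\Omega = \overline{t(B_n)} \subset \mathbb{R}$ via the dynamic realisation. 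One small point to tighten: your justification that the action on $\Omega$ has no global fixed points only shows that points of the orbit $t(B_n)$ are not globally fixed, whereas what is needed (and what the proof of Proposition \ref{stab_prop} actually uses) is that $\Stab(x_0) \neq B_n$, which does hold because a tight embedding has unbounded image in both directions, so no point of $\mathbb{R}$ can be fixed by all of $B_n$.
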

\begin{proof}
We show (1). Let $<$ denote the left ordering determined by $P$ relative to which $\Delta_{n-1}^2$ is not cofinal, and note that $\Delta_n^2$ is positive.  Consider the totally ordered set 
\[ S = \{ T \subset B_n \mid \beta \in T \mbox{ and } \alpha < \beta \mbox{ implies } \alpha \in T \}.
\] 
This set is totally ordered by inclusion, and one can verify that left-multiplication by elements of $B_n$ defines an action of $B_n$ on $S$ by order-preserving bijections.  

Consider the element $s_0 \in S$ defined by 
\[s_0 = \{ \beta \in B_n \mid \beta < \Delta_{n-1}^{2k} \mbox{ for some } k \in \mathbb{Z} \}. 
\]
Without loss of generality we assume that $id<\Delta_{n-1}^2$, and define a sequence $\{s_i\}_{i=1}^{\infty} \subset S$ by  $s_i = \{ \beta \in B_n \mid \beta \leq \Delta_{n-1}^{2i} \}$ for $i \geq 1$ (if $id > \Delta_{n-1}^2$, define an analogous sequence using negative powers).  We show that $\{s_i\}_{i=1}^{\infty}$ converges to $s_0$ in the order topology on $S$.

To see this, let $(t_0, t_1)$ be an open interval containing $s_0$.  Then the containment $t_0 \subset s_0$ is proper, so there exists $\beta \in s_0 \setminus t_0$.  Since $\beta \in s_0$ there exists $N>0$ such that $\beta < \Delta_{n-1}^{2k}$ for all $k>N$, so that $\beta \in s_{k}$.  Since every element in $t_0$ is less than $\beta$, this shows $t_0 \subset s_k \subset s_0$ and so $s_k \in (t_0, t_1)$ for all $k>N$.

Now we are in a position to apply Proposition \ref{conv seq}.  For each $\beta \in B_n$, define the set $t_{\beta} = \{ \alpha \in B_n \mid \alpha \leq \beta\}$, and set $T  = \{ t_{\beta} \mid \beta \in B_n\} \subset S$ and note $\{s_i\}_{i=1}^{\infty} \subset T \subset S$.   The stabiliser of each $t_{\beta} \in S$ under the $B_n$-action is trivial, and $\mathfrak{O}_{t_{\beta}} = \{ \beta P \beta^{-1} \}$ for each $\beta \in B_n$.  Thus $\mathfrak{O}_T = \{\beta P \beta^{-1} \mid \beta \in B_n \}$ and by Proposition \ref{conv seq}, $\mathfrak{O}_{s_0} \cap  \overline{\{\beta P \beta^{-1} \mid \beta \in B_n \}} \neq \emptyset$.

It remains to show that every element of $\mathfrak{O}_{s_0}$ is of the form $P_D^{rel} \cup Q$ where $Q \in \mathrm{LO}(B_{n-1})$.  Every element of $\mathfrak{O}_{s_0}$ is of the form $P \cup Q$ where $P$ is a positive cone in $B_n$ relative to $\Stab(s_0)$, and $Q \in \mathrm{LO}(\Stab(s_0))$.  By Proposition \ref{stab_prop} we have $\Stab(s_0) = B_{n-1}$, so $Q \in \mathrm{LO}(B_{n-1})$.  Now by Lemma \ref{relative rigidity}, $P \in \{ P_D^{rel}, (P_D^{rel})^{-1} \}$.  Since we have assumed $\Delta_n^2 \in P$, we also have $\Delta_n^2 \in \beta P \beta^{-1}$ for all $\beta \in B_n$ and thus $\Delta_n^2$ is contained in every limit point of $\{\beta P \beta^{-1} \mid \beta \in B_n \}$.   It follows that $P = P_D^{rel}$, finishing the proof.  

The case where $\Delta_n^2 \in P^{-1}$ is nearly identical in its proof.
\end{proof}

The next theorem shows that whether or not $\Delta_{n-1}^2$ is cofinal in an ordering completely determines the translation numbers of $\beta \in B_n$ relative to that ordering.

\begin{theorem}
\label{translation_number_same}
Suppose that $P \in \mathrm{LO}(B_n)$ and $\Delta_n^2 \in P$,  and that $\Delta_{n-1}^2$ is not cofinal in the left ordering of $B_n$ determined by $P$.  Then $\tau_{\beta}(P) = \tau_{\beta}(P_D)$ for all $\beta \in B_n$.
\end{theorem}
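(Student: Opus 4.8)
The plan is to deduce this from Theorem \ref{main_theorem} together with the two structural facts about $\tau_\beta$ established earlier: that it is continuous on $U_{\Delta_n^2}$ (Proposition \ref{continuity}) and invariant under the conjugation action (Proposition \ref{conjugation}). These two facts together say that $\tau_\beta$ is constant on the closure of any conjugacy orbit lying inside $U_{\Delta_n^2}$, and Theorem \ref{main_theorem} tells us precisely that this closure reaches an ordering whose relative part is the Dehornoy relative cone $P_D^{rel}$. The remaining input, Lemma \ref{relative_translation}, will then let us discard the $B_{n-1}$-part of that ordering and land on $P_D$ itself.

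Concretely, first I would invoke Theorem \ref{main_theorem}(1): since $\Delta_n^2 \in P$ and $\Delta_{n-1}^2$ is not cofinal with respect to $P$, there is some $Q \in \mathrm{LO}(B_{n-1})$ with
\[
P_D^{rel} \cup Q \in \overline{\{ \beta P \beta^{-1} \mid \beta \in B_n \}}.
\]
Because $B_n$ is countable, $\mathrm{LO}(B_n)$ is metrizable, so this closure point is the limit of a sequence $\{\beta_i P \beta_i^{-1}\}$. Every conjugate $\beta_i P \beta_i^{-1}$ contains the central, positive element $\Delta_n^2$, so each lies in $U_{\Delta_n^2}$; since $U_{\Delta_n^2}$ is clopen (its complement being $U_{\Delta_n^{-2}}$), the limit $P_D^{rel} \cup Q$ lies in $U_{\Delta_n^2}$ as well. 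I can therefore apply continuity of $\tau_\beta$ on $U_{\Delta_n^2}$ to obtain $\tau_\beta(P_D^{rel} \cup Q) = \lim_i \tau_\beta(\beta_i P \beta_i^{-1})$, and by conjugation-invariance each term on the right equals $\tau_\beta(P)$. Hence $\tau_\beta(P) = \tau_\beta(P_D^{rel} \cup Q)$ for all $\beta \in B_n$.

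It then remains to replace $Q$ by the correct cone on $B_{n-1}$. Here I would record the decomposition $P_D = P_D^{rel} \cup (P_D \cap B_{n-1})$, which follows from the description in the proof of Proposition \ref{PDrel}: the $(n-1)$-positive braids are exactly $P_D^{rel}$, while the braids that are $i$-positive for some $i \leq n-2$ are precisely the nontrivial elements of $B_{n-1}$ positive in the Dehornoy ordering, so $P_D \cap B_{n-1}$ is the Dehornoy cone of $B_{n-1}$ and lies in $\mathrm{LO}(B_{n-1})$. Since $\Delta_n^2$ is $(n-1)$-positive we have $\Delta_n^2 \in P_D^{rel}$, so Lemma \ref{relative_translation} applies with $C = B_{n-1}$, relative cone $P_D^{rel}$, central cofinal element $\Delta_n^2$, and the two cones $Q$ and $P_D \cap B_{n-1}$ in $\mathrm{LO}(B_{n-1})$. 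This yields $\tau_\beta(P_D^{rel} \cup Q) = \tau_\beta(P_D^{rel} \cup (P_D \cap B_{n-1})) = \tau_\beta(P_D)$, and combined with the previous paragraph gives $\tau_\beta(P) = \tau_\beta(P_D)$ for all $\beta \in B_n$.

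I expect the substantive content to be entirely front-loaded into Theorem \ref{main_theorem}; the difficulties remaining in the present argument are bookkeeping rather than conceptual. The two points requiring care are verifying that the limiting ordering $P_D^{rel} \cup Q$ actually sits in the domain $U_{\Delta_n^2}$ on which $\tau_\beta$ is known to be continuous, so that Proposition \ref{continuity} may legitimately be invoked, and confirming the splitting $P_D = P_D^{rel} \cup (P_D \cap B_{n-1})$ together with $\Delta_n^2 \in P_D^{rel}$, so that the hypotheses of Lemma \ref{relative_translation} are genuinely met.
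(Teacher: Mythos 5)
Your proposal is correct and follows essentially the same route as the paper's own proof: Theorem \ref{main_theorem} to reach $P_D^{rel} \cup Q$ in the closure of the conjugacy orbit, Propositions \ref{conjugation} and \ref{continuity} to conclude $\tau_\beta(P) = \tau_\beta(P_D^{rel} \cup Q)$, and Lemma \ref{relative_translation} to replace $Q$ by $P_D \cap B_{n-1}$ and land on $P_D$. The details you add beyond the paper's text---that the orbit and hence its closure lie in the clopen set $U_{\Delta_n^2}$, that $\Delta_n^2 \in P_D^{rel}$, and that $P_D = P_D^{rel} \cup (P_D \cap B_{n-1})$---are exactly the hypotheses the paper leaves implicit, and you verify them correctly.
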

\begin{proof}
By Theorem \ref{main_theorem}, there exists $Q \in \mathrm{LO}(B_{n-1})$ such that $$P_D^{rel} \cup Q \in \overline{\{\alpha P \alpha^{-1} \mid \alpha \in B_n \}}.$$

Fix $\beta \in B_n$.  By Lemma \ref{conjugation}, $\tau_{\beta}$ is constant on $\{\alpha P \alpha^{-1} \mid \alpha \in B_n \}$, and by Lemma \ref{continuity}, it is therefore constant on $ \overline{\{\alpha P \alpha^{-1} \mid \alpha \in B_n \}}$.  In particular, we conclude that $\tau_{\beta}(P)  = \tau_{\beta}( P_D^{rel} \cup Q)$.  By Lemma \ref{relative_translation}, $\tau_{\beta}(P_D^{rel} \cup Q) = \tau_{\beta}(P_D)$, which concludes the proof.

\end{proof}

From this, we arrive at the following corollary, which we can think of as meaning that $\Delta_{n-1}^2$ serves as a ``test element'' for computations of the translation numbers defined by a given left ordering of $B_n$. 

\begin{corollary}
\label{cor:agreewithDehornoy}
Suppose that $P \in \mathrm{LO}(B_n)$ and $\Delta_n^2 \in P$.  Then $\tau_{\beta}(P) = \tau_{\beta}(P_D)$ for all $\beta \in B_n$ if and only if $\tau_{\Delta_{n-1}^2}(P) = \tau_{\Delta_{n-1}^2}(P_D) = 0$.
\end{corollary}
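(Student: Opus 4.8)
The plan is to reduce the corollary to Theorem~\ref{translation_number_same} by establishing two facts: that $\tau_{\Delta_{n-1}^2}(P_D)=0$, and that for any $P\in\mathrm{LO}(B_n)$ with $\Delta_n^2\in P$, one has $\tau_{\Delta_{n-1}^2}(P)=0$ if and only if $\Delta_{n-1}^2$ fails to be cofinal in the ordering determined by $P$.

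First I would compute $\tau_{\Delta_{n-1}^2}(P_D)$. By the convexity of $B_{n-1}$ relative to the Dehornoy ordering, $B_{n-1}$ is a proper convex subgroup of $(B_n,<_D)$. Since $\Delta_n^2$ is absolutely cofinal and no proper convex subgroup can contain a cofinal element, we have $\Delta_n^2\notin B_{n-1}$; as $\Delta_n^2>_D id\in B_{n-1}$, convexity forces $\Delta_n^2$ to exceed every element of $B_{n-1}$ and $(\Delta_n^2)^{-1}$ to lie below every element, so that $(\Delta_n^2)^{-1}<_D b<_D \Delta_n^2$ for all $b\in B_{n-1}$. Applying this to $b=(\Delta_{n-1}^2)^k\in B_{n-1}$ gives $[(\Delta_{n-1}^2)^k]_{P_D}=0$ for every $k$, whence $\tau_{\Delta_{n-1}^2}(P_D)=0$. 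In particular the condition ``$\tau_{\Delta_{n-1}^2}(P)=\tau_{\Delta_{n-1}^2}(P_D)=0$'' is equivalent to the single condition ``$\tau_{\Delta_{n-1}^2}(P)=0$''.

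The heart of the argument, and the step I expect to be the \emph{main obstacle}, is the following dynamical lemma: if $\Delta_n^2\in P$ and $\Delta_{n-1}^2$ is cofinal in $<_P$, then $\tau_{\Delta_{n-1}^2}(P)\neq 0$. Write $w=\Delta_{n-1}^2$ and $z=\Delta_n^2$, and first suppose $w>_P id$. Cofinality of $w$, applied to the element $z$, yields an integer $K>0$ with $z<_P w^{K}$. Because $z$ is central, this single inequality lifts to all powers by induction: assuming $z^m<_P w^{Km}$, left-multiplication gives $z^{m+1}<_P z\,w^{Km}=w^{Km}z<_P w^{Km}w^{K}=w^{K(m+1)}$, where the middle equality uses centrality of $z$ and the last inequality left-multiplies $z<_P w^{K}$ by $w^{Km}$. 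Hence $[w^{Km}]_P\ge m$, so $[w^{Km}]_P/(Km)\ge 1/K$, and since the defining limit exists we conclude $\tau_w(P)\ge 1/K>0$. If instead $w<_P id$, the same argument applied to $w^{-1}$ gives $\tau_{w^{-1}}(P)>0$, and $\tau_{w^{-1}}(P)=-\tau_w(P)$ yields $\tau_w(P)\neq 0$. The only delicate point is the passage from $z<_P w^{K}$ to $z^m<_P w^{Km}$, which rests entirely on left-invariance together with the centrality of $z$.

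With these ingredients the corollary follows. For the forward implication, assuming $\tau_\beta(P)=\tau_\beta(P_D)$ for all $\beta$, specialising to $\beta=\Delta_{n-1}^2$ and invoking the first computation gives $\tau_{\Delta_{n-1}^2}(P)=\tau_{\Delta_{n-1}^2}(P_D)=0$. For the converse, assuming $\tau_{\Delta_{n-1}^2}(P)=0$, the contrapositive of the lemma shows that $\Delta_{n-1}^2$ is not cofinal in $<_P$, so Theorem~\ref{translation_number_same} applies and yields $\tau_\beta(P)=\tau_\beta(P_D)$ for all $\beta\in B_n$.
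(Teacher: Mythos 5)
Your proof is correct and follows essentially the same route as the paper: the forward direction uses that $\Delta_{n-1}^2$ lies in the $<_D$-convex subgroup $B_{n-1}$ (so $\tau_{\Delta_{n-1}^2}(P_D)=0$), and the converse uses the centrality-induction estimate (from $\Delta_n^2 <_P \Delta_{n-1}^{2K}$ deduce $\Delta_n^{2m} <_P \Delta_{n-1}^{2Km}$, hence $\tau_{\Delta_{n-1}^2}(P)\geq 1/K$) to rule out cofinality and then invokes Theorem \ref{translation_number_same}. The only cosmetic difference is that you phrase this estimate as a standalone lemma (cofinal $\Rightarrow$ nonzero translation number) and take its contrapositive, whereas the paper runs the same computation as a proof by contradiction.
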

\begin{proof}
Observe that if $\tau_{\Delta_{n-1}^2}(P) = \tau_{\Delta_{n-1}^2}(P_D)$, then $\tau_{\Delta_{n-1}^2}(P) = 0$ as $\Delta_{n-1}^2$ is bounded in the Dehornoy ordering (it is contained in the convex subgroup $B_{n-1}$).

On the other hand, if $\tau_{\Delta_{n-1}^2}(P) = 0$ then we suppose that $\Delta_{n-1}^{2m} > \Delta_{n}^2 $ for some $m>0$ and arrive at a contradiction (the case of $m<0$ being identical).  For in this case, $\Delta_{n-1}^{2km} > \Delta_{n}^{2k}$ for all $k>0$, so that $[\Delta_{n-1}^{2km}]_P> k$.  We compute
\[\tau_{\Delta_{n-1}^2}(P) = \lim_{k \to \infty} \frac{[\Delta_{n-1}^{2km}]_P}{km} \geq \lim_{k \to \infty} \frac{k}{km} = \frac{1}{m},
\]
this contradiction shows $\Delta_{n-1}^{2m}$ must be bounded above by $\Delta_{n}^2$.  Now apply Theorem \ref{translation_number_same}.
\end{proof}

\subsection{Many orderings sharing the same translation numbers}

At this point, it is a natural question to ask how many left orderings of $B_n$ satisfy the condition that $\Delta_{n-1}^2$ is bounded.  We answer this question with the following proposition:

\begin{proposition}
There are uncountably many left orderings of $B_n$ relative to which $\Delta_{n-1}^2$ is bounded.
\end{proposition}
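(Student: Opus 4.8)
The plan is to produce the required family completely explicitly, by combining the relative cone $P_D^{rel}$ with the lexicographic construction recalled just before Lemma~\ref{relative_translation}. Since $P_D^{rel}\subset B_n$ is a positive cone relative to $B_{n-1}$ (Proposition~\ref{PDrel}), for each $Q\in\mathrm{LO}(B_{n-1})$ the set $P_D^{rel}\cup Q$ is a positive cone of a left ordering of $B_n$. My first step would be to note that in every one of these orderings $B_{n-1}$ is a convex subgroup: it is exactly the bottom subgroup $C$ in the lexicographic construction, so any positive element lying outside $B_{n-1}$ is an upper bound for all of $B_{n-1}$.

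Next I would use this convexity to verify the defining property. As $\Delta_{n-1}^2\in B_{n-1}$, every power $\Delta_{n-1}^{2k}$ lies in the proper convex subgroup $B_{n-1}$ and is therefore bounded; for instance $\sigma_{n-1}\in P_D^{rel}$ is a positive element outside $B_{n-1}$ that bounds all positive powers from above. Hence $\Delta_{n-1}^2$ is not cofinal in the ordering determined by $P_D^{rel}\cup Q$, for every choice of $Q$.

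It then remains to count the orderings so produced. The assignment $Q\mapsto P_D^{rel}\cup Q$ is injective, since no element of $P_D^{rel}$ lies in $B_{n-1}$ and so $Q=(P_D^{rel}\cup Q)\cap B_{n-1}$ recovers $Q$. Thus the family $\{P_D^{rel}\cup Q : Q\in\mathrm{LO}(B_{n-1})\}$ is in bijection with $\mathrm{LO}(B_{n-1})$, and the proposition follows from the uncountability of $\mathrm{LO}(B_{n-1})$.

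The one genuinely delicate point, and where I expect the main obstacle to lie, is exactly this cardinality input. For $n\geq 4$ the subgroup $B_{n-1}$ is a braid group on at least three strands, and $\mathrm{LO}(B_{n-1})$ is uncountable by the classical fact that such braid groups carry a Cantor set of left orderings; the construction then finishes immediately. The boundary case $n=3$ is not covered by this argument, since $B_{n-1}=B_2\cong\mathbb{Z}$ admits only two left orderings, and must be handled separately, for example by directly constructing a Cantor set of left orderings of $B_3$ in which $\sigma_1$ (hence $\Delta_2^2=\sigma_1^2$) fails to be cofinal, using a proper convex subgroup that contains $\sigma_1$ but avoids the absolutely cofinal central element $\Delta_3^2$.
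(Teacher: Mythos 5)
Your construction for $n \geq 4$ is correct, and it is a genuinely different route from the paper's: you build the orderings explicitly as lexicographic extensions $P_D^{rel} \cup Q$ over the convex subgroup $B_{n-1}$ (using Proposition \ref{PDrel}), with injectivity of $Q \mapsto P_D^{rel}\cup Q$ and boundedness of $\Delta_{n-1}^2$ both argued correctly, and then import the known uncountability of $\mathrm{LO}(B_{n-1})$ for $n-1\geq 3$. The paper instead takes the conjugates $\beta P_D\beta^{-1}$ of the Dehornoy ordering, shows they all lie in the closed set $r^{-1}(S)$ where $r\colon \mathrm{LO}(B_n)\to\mathrm{LO}(\langle\Delta_{n-1}^2,\Delta_n^2\rangle)$ is the restriction map and $S$ is the set of four lexicographic orderings of $\mathbb{Z}^2$, and then uses the fact that $P_D$ is an accumulation point of its own conjugates \cite[Chapter XIV, Proposition 2.3]{DDRW} to get a compact perfect, hence uncountable, set. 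Note that the ``classical fact'' you invoke is itself usually proved by exactly this orbit-closure argument, so your proof is not really more elementary---it outsources the same mechanism to a citation.

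The genuine gap is the case $n=3$, and your proposed fallback cannot be repaired as stated. You suggest building a Cantor set of left orderings of $B_3$ ``using a proper convex subgroup that contains $\sigma_1$.'' But any subgroup containing $\sigma_1$ contains $\langle\sigma_1\rangle = B_2$, so by Corollary \ref{maximal convex} the only proper relatively convex subgroup of $B_3$ containing $\sigma_1$ is $B_2$ itself; and by Lemma \ref{relative rigidity} the relative positive cone on $B_3/B_2$ must be $P_D^{rel}$ or $(P_D^{rel})^{-1}$, while $\mathrm{LO}(B_2) \cong \mathrm{LO}(\mathbb{Z})$ has exactly two elements. Hence there are precisely \emph{four} left orderings of $B_3$ in which $\sigma_1$ lies in a proper convex subgroup---nowhere near uncountably many. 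The paper's own rigidity results thus rule out your fix. What makes the proposition true for $n=3$ is that boundedness of $\sigma_1^2$ does \emph{not} require $\langle\sigma_1\rangle$ to be convex: in a conjugate ordering $\beta P_D\beta^{-1}$ the convex subgroup is $\beta\langle\sigma_1\rangle\beta^{-1}$, yet $\sigma_1^2$ remains bounded because translation numbers are conjugation invariant (Proposition \ref{conjugation}) and vanishing translation number forces boundedness (the argument inside Corollary \ref{cor:agreewithDehornoy}). So for $n=3$ you would need to switch to something like the paper's conjugation argument, which in fact handles all $n\geq 3$ uniformly.
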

\begin{proof}
Consider the subgroup $H = \langle \Delta_{n-1}^2, \Delta_n^2 \rangle$, which is isomorphic to $\mathbb{Z} \oplus \mathbb{Z}$, and the restriction map $r : \mathrm{LO}(B_n) \rightarrow \mathrm{LO}(H)$ given by $r(P) = P \cap H$.  This map is clearly continuous.

Considering the short exact sequence
\[ 0 \rightarrow \langle \Delta_{n-1}^2 \rangle \rightarrow H \rightarrow \langle \Delta_n^2 \rangle \rightarrow 0,
\]
we let $S \subset \mathrm{LO}(H)$ denote the set consisting of the four lexicographic orderings arising from this short exact sequence. Observe that $P \in \mathrm{LO}(B_n)$ corresponds to an ordering where $\Delta_{n-1}^2$ is bounded if and only if $r(P)\in S$.  

Next, observe that $\beta P_D \beta^{-1} \in r^{-1}(S)$ for all $\beta \in B_n$.  Moreover, the Dehornoy ordering is an accumulation point of its own conjugates \cite[Chapter XIV, Proposition 2.3]{DDRW}.  Therefore the set $\overline{\{ \beta P_D \beta^{-1} \mid \beta \in B_n \}}$ is a closed (hence compact) subset of $r^{-1}(S)$ which contains no isolated points.  Thus it is uncountable, meaning $r^{-1}(S)$ is uncountable.
\end{proof}

\section{Fractional Dehn twist coefficients of low-genus surfaces}

\label{FDTC}

Let $\Sigma_{g,n}^b$ denote a surface of genus $g$ with $n$ marked points and $b$ boundary components.  Given a curve $\alpha \in \Sigma_{g,n}^b$ we use $T_\alpha \in  \Mod(\Sigma_{g,n}^b)$ to denote the class of a Dehn twist about $\alpha$; given a boundary component $d$ we use $T_d$ to denote the class of a Dehn twist about a curve parallel to the boundary component $d$. 

Now, if $\Sigma_{g,n}^b$ is a hyperbolic surface with boundary (i.e. $b>0$, and $g > 0$ or $n > 1$), then we can construct a particular action of $\Mod(\Sigma_{g,n}^b)$ on $\mathbb{R}$ as below, and use it to define the fractional Dehn twist coefficient of an element $h\in  \Mod(\Sigma_{g,n}^b)$.

Let $p: \widetilde \Sigma_{g,n}^b \rightarrow \Sigma_{g,n}^b$ denote the universal cover, and fix an isometric embedding $\widetilde \Sigma_{g,n}^b \subset \mathbb{H}^2$.   Choose $x_0 \in \partial \Sigma_{g,n}^b$ and let $d$ denote the component of the boundary fixing $x_0$.  Fix a choice of lift $\tilde{x_0} \in \tilde{d} \subset \partial \widetilde \Sigma_{g,n}^b$, so that for each $h \in \Mod(\Sigma_{g,n}^b)$ there is a unique lift of $h:\widetilde \Sigma_{g,n}^b \rightarrow \widetilde \Sigma_{g,n}^b$ satisfying $h(\tilde{x_0}) = \tilde{x_0}$, so that $h$ also fixes $\tilde{d}$.  Thus we have an action of $ \Mod(\Sigma_{g,n}^b)$ on $\widetilde \Sigma_{g,n}^b$ fixing $\tilde{d}$, which extends to an action on $\partial \widetilde \Sigma_{g,n}^b$ by orientation-preserving homeomorphisms.

 Let $\ell \subset \overline{\mathbb{H}^2}$ denote the closure of $\tilde{d}$, and identify $\partial \widetilde \Sigma_{g,n}^b \setminus \ell$ with the interval $(0,\pi)$, and thus with $\mathbb{R}$, by identifying each point $y$ on the boundary with the unique geodesic from $x_0$ to $y$.  Now the action on the boundary of $\Mod(\Sigma_{g,n}^b)$ yields and orientation-preserving action on $\mathbb{R}$.  We orient the boundary and parameterise it so that the action of the boundary Dehn twist $T_d$ satisfies $T_d(x) = x+1$ for all $x \in \mathbb{R}$.  This gives 
 \[ \rho_{s,d}: \Mod(\Sigma_{g,n}^b) \rightarrow \mathrm{H}\widetilde{\mathrm{ome}}\mathrm{o}_+(S^1)
 \]
which we call the \emph{standard representation with respect to $d$}, or the \emph{Nielsen-Thurston action on $\mathbb{R}$ with respect to $d$}.  The \emph{fractional Dehn twist coefficient} of $h \in \Mod(\Sigma_{g,n}^b)$ can be defined as
\[ c(h, d) = \tau_h(\rho_{s,d}).
\]
That this is equivalent to the usual definition of the fractional Dehn twist coefficient can be found in \cite[Theorem 4.16]{IK}, and for the special case of the braid groups (which is our case of interest) the result appears in \cite{Mal}.  When  there is a single boundary component (as in our case of interest), we do not need to keep track of the component $d$ used in this construction, so we simply use the notation $ \rho_{s}$ and $ c(h)$.

\begin{theorem}
\label{thm:mappingclassgroupactions}
Suppose that $\rho: \mathrm{Mod}(\Sigma_{0,n}^1) \rightarrow \mathrm{Homeo_+}(\mathbb{R})$ is an injective homomorphism such that the induced action of $\mathrm{Mod}(\Sigma_{0,n}^1) $ on $\mathbb{R}$ is without global fixed points, and let $d$ denote a simple closed curve parallel to the boundary component of  $\Sigma_{0,n}^1$.  If there exists a simple closed curve $\alpha \subset  \Sigma_{0,n}^1 $ such that: 
\begin{enumerate}
\item  $\Sigma_{0,n}^1 \setminus \{ \alpha \} \cong \Sigma_{0,n-1}^1 \cup \Sigma_{0,1}^2$, and
\item $\rho(T_{\alpha})$ has a fixed point in $\mathbb{R}$,
\end{enumerate}
then $\rho$ is conjugate by a (possibly orientation-reversing) homeomorphism of $\mathbb{R}$ to a representation $\rho' :  \mathrm{Mod}(\Sigma_{0,n}^1) \rightarrow \mathrm{H\widetilde{ome}o}_+(\mathbb{S}^1)$ such that:
\begin{enumerate}
\item   $\rho'(T_d)(x) = x+1$ for all $x \in \mathbb{R}$, and
\item $c(h) =\tau_h(\rho')$ for all $h \in  \mathrm{Mod}(\Sigma_{0,n}^1)$.
\end{enumerate}

\end{theorem}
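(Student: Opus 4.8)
The plan is to convert the given action $\rho$ into a left ordering of $B_n \cong \Mod(\Sigma_{0,n}^1)$ to which Theorem \ref{translation_number_same} applies, and then read the translation numbers of $\rho$ off of that ordering. First I would record the topological input. Condition (1) says precisely that $\alpha$ encircles $n-1$ of the marked points, so the Dehn twist $T_\alpha$ is the full twist on those $n-1$ strands; thus $T_\alpha$ is conjugate in $B_n$ to $\Delta_{n-1}^2$ (under the embedding $B_{n-1}\subset B_n$). Since having a fixed point is preserved under conjugation of homeomorphisms, condition (2) then gives that $\rho(\Delta_{n-1}^2)$ has a fixed point in $\mathbb{R}$.

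The first genuinely dynamical step, and the one I expect to be the main obstacle, is to show that $\rho(\Delta_n^2)=\rho(T_d)$ acts \emph{freely} on $\mathbb{R}$; this is what makes a genuine conjugacy (rather than a mere semiconjugacy) to the desired normal form possible. I would argue this using the $Q_{P,x}$ construction from Section \ref{sec:dehornoy}: if $\rho(\Delta_n^2)$ fixed some point $p$, then building the positive cone $Q_{P_0,p}$ from the $\rho$-action at $p$ (for any $P_0\in\mathrm{LO}(\Stab(p))$) would place the central element $\Delta_n^2$ inside $\Stab(p)$; since the action has no global fixed point there is some $g$ with $g\cdot p>p$, and then $g$ exceeds every power of $\Delta_n^2$ in this ordering, contradicting that $\Delta_n^2$ is absolutely cofinal \cite{DDRW}. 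Hence $\rho(\Delta_n^2)$ is fixed-point-free, so either $\rho(\Delta_n^2)(x)>x$ for all $x$ or $\rho(\Delta_n^2)(x)<x$ for all $x$, and in either case it is conjugate to $sh(1)$ by a homeomorphism of $\mathbb{R}$ that is orientation-reversing precisely in the second case. Conjugating the whole action by this homeomorphism produces $\rho'$ with $\rho'(T_d)=sh(1)$, giving conclusion (1); because $\Delta_n^2$ is central, every $\rho'(h)$ commutes with $sh(1)$ and hence lands in $\mathrm{H\widetilde{ome}o}_+(S^1)$. Note that $\rho'(\Delta_{n-1}^2)$ still fixes some point $x_0$.

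Next I would build the ordering. Take $x_0$ to be a fixed point of $\rho'(\Delta_{n-1}^2)$ and set $P=Q_{P_0,x_0}$ for some $P_0\in\mathrm{LO}(\Stab(x_0))$, using the $\rho'$-action. Then $\Delta_n^2\in P$ because $\rho'(\Delta_n^2)(x_0)=x_0+1>x_0$, while $\Delta_{n-1}^2\in\Stab(x_0)$ is not cofinal in $P$: the element $\Delta_n^2$ sends $x_0$ strictly up and so exceeds every power of $\Delta_{n-1}^2$. Thus Theorem \ref{translation_number_same} applies and gives $\tau_\beta(P)=\tau_\beta(P_D)$ for every $\beta\in B_n$.

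Finally I would match the two notions of translation number. Computing the floor $[\beta^k]_P$ directly from the definition of $Q_{P_0,x_0}$, one finds $(\Delta_n^2)^m\le_P\beta^k$ iff $\rho'(\beta)^k(x_0)>x_0+m$ (up to a tie-break on $\Stab(x_0)$ that perturbs the floor by at most $1$), so $\bigl|[\beta^k]_P-(\rho'(\beta)^k(x_0)-x_0)\bigr|\le 1$. Dividing by $k$ and letting $k\to\infty$ yields $\tau_\beta(P)=\tau_\beta(\rho')$, where the right-hand side is the dynamical translation number, which is basepoint-independent since every $\rho'(h)$ commutes with $sh(1)$. Combining this with $\tau_\beta(P_D)=c(\beta)$, i.e. the fact that the Dehornoy ordering computes the fractional Dehn twist coefficient \cite{Mal}, gives $c(\beta)=\tau_\beta(\rho')$ for all $\beta$, which is conclusion (2). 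The remaining routine points are that $Q_{P_0,x_0}$ is a genuine positive cone (the construction recalled in Section \ref{sec:dehornoy}) and the standard fact that a fixed-point-free orientation-preserving homeomorphism of $\mathbb{R}$ is conjugate to a unit translation.
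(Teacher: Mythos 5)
Your proof is correct and follows the same overall strategy as the paper's: use the change of coordinates principle to transfer the fixed-point hypothesis from $T_\alpha$ to $\Delta_{n-1}^2$, conjugate so that $\rho'(T_d)=sh(1)$, induce a left ordering of $B_n \cong \Mod(\Sigma_{0,n}^1)$ from the action, invoke the ordering-rigidity result to conclude the translation numbers agree with those of the Dehornoy ordering, and finish with Malyutin's theorem that the Dehornoy ordering computes fractional Dehn twist coefficients. The one genuine difference is how the ordering is built. The paper induces $P$ from an enumeration of the rationals and then must check $\tau_{\Delta_{n-1}^2}(P)=0$, routing through Corollary \ref{cor:agreewithDehornoy}; you instead base the $Q_{P_0,x_0}$ construction of Section \ref{sec:dehornoy} at a fixed point $x_0$ of $\rho'(\Delta_{n-1}^2)$, which places $\Delta_{n-1}^2$ inside the convex stabiliser $\Stab(x_0)$, makes non-cofinality immediate, and lets Theorem \ref{translation_number_same} apply directly. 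Your write-up also fills in two points the paper leaves implicit: the argument (via absolute cofinality of $\Delta_n^2$ and the $Q_{P,x}$ construction) that $\rho(T_d)$ must act freely on $\mathbb{R}$, which is what legitimises the conjugation into $\mathrm{H\widetilde{ome}o}_+(S^1)$; and the floor estimate $\left|[\beta^k]_P - \left(\rho'(\beta)^k(x_0)-x_0\right)\right| \leq 1$ identifying the algebraic translation number $\tau_\beta(P)$ with the dynamical one $\tau_\beta(\rho')$, which the paper dismisses with ``one can verify.''
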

\begin{proof} 
Let $\phi:B_n \rightarrow \mathrm{Mod}(\Sigma_{0,n}^1)$ denote the standard isomorphism.  
First note that as $\rho$ provides an action without global fixed points, the element $\rho(\phi(\Delta_n^2))= \rho(T_d)$ must act without fixed points.  Thus, by using an orientation-reversing homeomorphism if necessary, we may conjugate $\rho$ to produce $\rho'$ with image in $\mathrm{H\widetilde{ome}o}_+(\mathbb{S}^1)$.

Note also that $\phi(\Delta_{n-1})$ 
 is represented by a Dehn twist about a curve $\gamma$ such that $\Sigma_{0,n}^1 \setminus \{ \gamma \} \cong \Sigma_{0,n-1}^1 \cup \Sigma_{0,1}^2$.  Moreover, if $\alpha$ is any other curve such that $\Sigma_{0,n}^1 \setminus \{ \alpha \} \cong \Sigma_{0,n-1}^1 \cup \Sigma_{0,1}^2$, then there exists a homeomorphism of $\Sigma_{0,n}^1$ carrying $\gamma$ to $\alpha$ \cite[Section 1.3]{FM}.  Thus $\rho'(\phi(\Delta_{n-1}))$ is conjugate to an element having a fixed point, so itself has a fixed point.

Now fix an enumeration $\{r_0, r_1, \dots \}$ of the rationals, and define a positive cone $P \subset B_n$ in the usual way:  $\beta \in B_n$ satisfies $\beta \in P$ if and only if $\rho'(\phi(\beta))(r_i) >r_i$, where $i$ is the smallest index such that $\rho'(\phi(\beta))(r_i) \neq r_i$.  One can verify that $\tau_\beta(P) = \tau_\beta(\rho'\circ \phi)$ for all $\beta \in B_n$ and that $\tau_\beta(\Delta_{n-1}^2) (P) = 0$.  By Corollary \ref{cor:agreewithDehornoy} this implies that $\tau_\beta(P) = \tau_\beta(P_D)$ for all $\beta \in B_n$, which by \cite[Theorem 7.5]{Mal} implies that $\tau_\beta(P) = c(\phi(\beta))$, completing the proof.
\end{proof}

We can also offer a restatement of this theorem in terms of left orderings of $B_n \cong  \mathrm{Mod}(\Sigma_{0,n}^1)$, which appears as Theorem \ref{thm:mappingclassorders} in the introduction.

\begin{proof}[Proof of Theorem \ref{thm:mappingclassorders}]
Let $\rho_P$ denote the dynamic realisation of the given ordering.  The restriction $T_d > id$ guarantees that $\rho_P$ is conjugate via an orientation-preserving homeomorphism of $\mathbb{R}$ to a representation $\rho'$ with image in $\mathrm{H\widetilde{ome}o}_+(\mathbb{S}^1)$. Therefore $\tau_h(P) = \tau_{h}(\rho') = c(h)$ for all $h \in  \mathrm{Mod}(\Sigma_{0,n}^1)$ by Theorem \ref{thm:mappingclassgroupactions}
.
\end{proof}

We arrive at similar results for the surface $\Sigma_{1,0}^1$.  Recall that $\mathrm{Mod}(\Sigma_{1,0}^1) \cong B_3$, where the square of generator of the centre $\Delta_3^4$ is carried to the Dehn twist $T_d$ about the boundary curve $d$, and each generator is carried to a Dehn twist about a non-separating simple closed curve (e.g. see \cite{MWi}).  Therefore we conclude:

\begin{theorem}
\label{genus1}
Suppose that $\rho: \mathrm{Mod}(\Sigma_{1,0}^1) \rightarrow \mathrm{Homeo_+}(\mathbb{R})$ is an injective homomorphism such that the induced action of $\mathrm{Mod}(\Sigma_{1,0}^1) $ on $\mathbb{R}$ is without global fixed points, and let $d$ denote a simple closed curve parallel to the boundary component of  $\Sigma_{1,0}^1$.  Let $\alpha$ be a non-separating simple closed curve on $\Sigma_{1,0}^1$. If $\tau_{T_\alpha}(\rho) = 0$ 
then $\rho$ is conjugate by a (possibly orientation-reversing) homeomorphism of $\mathbb{R}$ to a representation $\rho' :  \mathrm{Mod}(\Sigma_{1,0}^1) \rightarrow \mathrm{H\widetilde{ome}o}_+(\mathbb{S}^1)$ such that:
\begin{enumerate}
\item   $\rho'(T_d)(x) = x+1$ for all $x \in \mathbb{R}$, and
\item $c(h) = \tau_h(\rho')$ for all $h \in  \mathrm{Mod}(\Sigma_{1,0}^1)$.
\end{enumerate}
\end{theorem}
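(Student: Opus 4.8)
The plan is to leverage the isomorphism $\mathrm{Mod}(\Sigma_{1,0}^1) \cong B_3$ and reduce Theorem \ref{genus1} to the already-established Theorem \ref{thm:mappingclassgroupactions} (or its orderings version) applied to $B_3$. Let $\psi : B_3 \to \mathrm{Mod}(\Sigma_{1,0}^1)$ denote the standard isomorphism described in the remark preceding the statement. Under $\psi$, the central element $\Delta_3^4 \in B_3$ is carried to the boundary Dehn twist $T_d$, and each Artin generator $\sigma_i$ is carried to a Dehn twist about a non-separating simple closed curve. The key translation is that the role played by $T_d$ in the surface picture is played by $\Delta_3^4$ (equivalently $(\Delta_3^2)^2$, noting $\Delta_3^2$ generates the centre of $B_3$) on the braid side, while the role of $\Delta_{n-1}^2$ in the $B_n$ argument is played here by the image of $\Delta_2^2 = \sigma_1^2$, which is precisely a Dehn twist $T_\alpha$ about a non-separating curve $\alpha$.

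\textbf{Step 1: Pass to a circle action.} Exactly as in the proof of Theorem \ref{thm:mappingclassgroupactions}, since $\rho$ acts without global fixed points, the central element $\rho(\psi(\Delta_3^4)) = \rho(T_d)$ must act on $\mathbb{R}$ without fixed points; hence, after conjugating by an orientation-reversing homeomorphism if necessary, we may assume $\rho = \rho'$ has image in $\mathrm{H\widetilde{ome}o}_+(\mathbb{S}^1)$, and we may reparametrise so that $\rho'(T_d)(x) = x+1$.

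\textbf{Step 2: Translate the hypothesis into a statement about translation numbers and a test element.} Using the enumeration-of-rationals construction from the proof of Theorem \ref{thm:mappingclassgroupactions}, define a positive cone $P \subset B_3$ so that $\tau_\beta(P) = \tau_\beta(\rho' \circ \psi)$ for all $\beta \in B_3$ and $\Delta_3^4 \in P$ (equivalently $\Delta_3^2 \in P$, which is the central, absolutely cofinal element $z$ needed to invoke the algebraic translation-number machinery). The hypothesis $\tau_{T_\alpha}(\rho) = 0$ translates, via $\psi(\sigma_1^2) = T_\alpha$ and $\sigma_1^2 = \Delta_2^2 = \Delta_{n-1}^2$ for $n=3$, into $\tau_{\Delta_{2}^2}(P) = 0$. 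By Corollary \ref{cor:agreewithDehornoy} this is exactly the condition forcing $\tau_\beta(P) = \tau_\beta(P_D)$ for all $\beta \in B_3$.

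\textbf{Step 3: Identify with the fractional Dehn twist coefficient.} Finally, invoke the appropriate comparison result identifying the Dehornoy translation numbers with fractional Dehn twist coefficients (the cited \cite[Theorem 7.5]{Mal}, together with the dictionary between $B_3$ and $\mathrm{Mod}(\Sigma_{1,0}^1)$) to conclude $\tau_h(\rho') = c(h)$ for all $h \in \mathrm{Mod}(\Sigma_{1,0}^1)$, yielding both conclusions of the theorem. The main obstacle I anticipate is \emph{not} the circle-action or ordering machinery, which transfers verbatim through the isomorphism, but rather justifying carefully that the non-separating curve $\alpha$ may be taken (up to a homeomorphism of $\Sigma_{1,0}^1$, which induces an automorphism of $\mathrm{Mod}(\Sigma_{1,0}^1)$ preserving translation numbers by conjugation-invariance, Proposition \ref{conjugation}) to be $\psi(\sigma_1)$'s defining curve, so that $T_\alpha$ really corresponds to $\sigma_1^2$ and the cofinality test element $\Delta_2^2$ matches up correctly. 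This relies on the change-of-coordinates principle: any two non-separating simple closed curves on $\Sigma_{1,0}^1$ are related by a homeomorphism of the surface, so that the hypothesis $\tau_{T_\alpha}(\rho)=0$ is independent of which non-separating curve is chosen and may be transported to the specific curve realising $\sigma_1$.
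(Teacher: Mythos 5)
Your proposal is correct and follows essentially the same route as the paper: the paper's proof likewise observes that $\Delta_{n-1}^2 = \sigma_1^2$ when $n=3$, invokes the change-of-coordinates principle for non-separating curves on $\Sigma_{1,0}^1$, and then runs the proof of Theorem \ref{thm:mappingclassgroupactions} verbatim through the isomorphism with $B_3$ (circle action, positive cone from an enumeration of rationals, Corollary \ref{cor:agreewithDehornoy}, and the comparison with fractional Dehn twist coefficients). The only cosmetic slip is that $\psi(\sigma_1) = T_\alpha$, so $\sigma_1^2$ corresponds to $T_\alpha^2$ rather than $T_\alpha$; this is harmless since $\tau_{T_\alpha}(\rho)=0$ if and only if $\tau_{T_\alpha^2}(\rho)=0$.
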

\begin{proof}
We need only observe that when $n=3$, $\Delta_{n-1}^2 = \sigma_1^2$, and that if $\alpha, \gamma$ are nonseparating curves in $\Sigma_{1,0}^1$ then there exists a homeomorphism of $\Sigma_{1,0}^1$ carrying one to the other \cite[Section 1.3.1]{FM}.  Now proceed as in the proof of Theorem \ref{thm:mappingclassgroupactions}.
\end{proof}

In analogy with Theorem \ref{thm:mappingclassorders}, if so inclined, one can restate Theorem \ref{genus1} purely in terms of left orderings of the mapping class group $\mathrm{Mod}(\Sigma_{1,0}^1)$.


\section{Mapping class groups of low-genus surfaces acting on the circle}
\label{sec:MCG}

We use the results of the previous section to control the action of low-genus mapping class groups on $S^1$.  First, we require a brief background on circular orderings, their relationship with left orderings, and cohomology.

\subsection{Circular orderings and cohomology}
\label{subsec:lifting_basics}

Given a group $G$, recall that an inhomogeneous 2-cocyle $f: G^2 \rightarrow \{ 0, 1\}$ is a function satisfying $f(id, g) = f(g, id) = 0$ for all $g \in G$ and $f(h,k) - f(gh, k) + f(g, hk) - f(g,h) = 0$ for all $g, h, k \in G$.  A \emph{circular ordering} of $G$ is an inhomogeneous 2-cocycle that also satisfies $f(g, h) \in \{0,1\}$ for all $g, h \in G$ and $f(g, g^{-1}) = 1$ for all $ g \in G \setminus \{ id \}$.

Given a group $G$ with a circular ordering $f$, we can construct a left-ordered central extension $\widetilde{G}_f$ of $G$ as follows. Take the set $\mathbb{Z} \times G$, and equip it with a multiplication according to the rule $(n, g) (m,h) = (n+m+f(g,h), gh)$.  One can check that $P_f = \{ (n,g) \mid n \geq 0 \} \setminus \{ (0, id)\}$ defines a positive cone in $\widetilde{G}_f$, so that $\widetilde{G}_f$ is left ordered.  There is a central extension 
\[ 0 \lra \mathbb{Z} \stackrel{\iota}{\lra} \widetilde{G}_f \overset{q}\lra G \lra 1 
\]
where $\iota(n) = (n, id)$ and $q(n,g) = g$. The element $(1,id)$ is positive cofinal and central, and the floor of $(n,g) \in \widetilde G_f$ is given by $[(n,g)] = n$. 

This construction is nothing more than a special instance of the usual correspondence between elements of $H^2(G; \mathbb{Z})$ and equivalence classes of central extensions $0 \lra \mathbb{Z} \stackrel{\iota_1}{\lra} H \overset{q}\lra G \lra 1 $. Recall that two central extensions
\[
0 \lra \mathbb{Z} \overset{\iota_1}\lra H_1 \overset{\rho_1}\lra G \lra 1 \eand 0 \lra \mathbb{Z} \overset{\iota_2}\lra H_2 \overset{\rho_2}\lra G \lra 1
\]
are equivalent whenever there exists a homomorphism $\phi:H_1 \to H_2$ such that $\phi\iota_1 = \iota_2$ and $\rho_2\phi = \rho_1$, which in fact makes $\phi$ into an isomorphism.

With these lifts in hand, we can also define the rotation number of an element $g \in G$ relative to a circular ordering $f$ of $G$.  It is 
\[ \mathrm{rot}_g(f) = \tau_{\tilde{g}}(P_f) \mod \mathbb{Z}, 
\]
where $\tilde{g} \in \widetilde{G}_f$ is any element satisfying $q(\tilde{g}) = g$.  It is straightforward to check that this definition is independent of our choice of lift $\tilde{g}$, and that for all $k \in \ZZ$, $\mathrm{rot}_{g^k}(f) = k(\mathrm{rot}_{g}(f))$.

Since a circular ordering on a group $G$ takes values in $\{0,1\}$, it is bounded. Thus it defines an element $[f]$ in bounded cohomology $H^2_b(G;\ZZ)$. We say two circular orderings $f_1$, $f_2$ on $G$ are \emph{semiconjugate} if $[f_1] = [f_2] \in H^2_b(G;\ZZ)$. 

\begin{remark}
Given a circular ordering $f$ on a countable group $G$, one can define a dynamic realisation $\rho_f:G \to S^1$. There is a dynamical notion of two actions of $G$ on $S^1$ being semiconjugate. The two uses of the word semiconjugacy are compatible. More precisely, $f_1$ and $f_2$ are semiconjugate circular orderings on a group $G$ if and only if $\rho_{f_1}$ and $\rho_{f_2}$ are semiconjugate (see \cite{CG} for an exposition of this well-known fact).
\end{remark}

The next result gives a characterisation of semiconjugate circular orderings on a group $G$ in terms of translation numbers of their left-ordered central extension. The result is well-known from a dynamical perspective (see e.g. \cite{BFH} for an expository discussion in terms of bounded Euler classes and translation number, or \cite{Gh}).

\begin{proposition}\label{prop:translation-number-semi-conjugate}
Let $f_1$ and $f_2$ be circular orderings on a group $G$. Let $P_{f_1}$ and $P_{f_2}$ be the positive cones of the left-ordered central extensions $\widetilde G_{f_1}$ and $\widetilde G_{f_2}$. Then $[f_1] = [f_2] \in H^2_b(G;\ZZ)$ if and only if there exists an equivalence of central extensions $\theta:\widetilde G_{f_1} \to \widetilde G_{f_2}$ satisfying $\tau_{\tilde g}(P_{f_1}) = \tau_{\theta(\tilde g)}(P_{f_2})$ for all $\tilde g \in \widetilde G_{f_1}$.
\end{proposition}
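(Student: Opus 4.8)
The plan is to prove both directions by translating the cohomological statement into a statement about the left-ordered central extensions, exploiting the fact that the bounded Euler class is the obstruction recorded by these extensions. First I would recall that since $f_1$ and $f_2$ take values in $\{0,1\}$, the equality $[f_1]=[f_2] \in H^2_b(G;\ZZ)$ is equivalent to the existence of a bounded function $u: G \to \ZZ$ with $f_1(g,h) - f_2(g,h) = u(gh) - u(g) - u(h)$ for all $g,h$, i.e. $f_1$ and $f_2$ differ by a coboundary of a bounded $0$-cochain. This $u$ is exactly the datum needed to build the equivalence of central extensions, since $f_1$ and $f_2$ represent the same ordinary class in $H^2(G;\ZZ)$ whenever they represent the same bounded class, and the standard correspondence gives an isomorphism $\theta: \widetilde G_{f_1} \to \widetilde G_{f_2}$ commuting with $\iota$ and $q$.

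For the forward direction, given such a $u$, I would define $\theta(n,g) = (n + u(g), g)$ and verify directly that this respects the twisted multiplications: the cocycle identity $(n,g)(m,h) = (n+m+f_1(g,h), gh)$ maps under $\theta$ to $(n+m+f_1(g,h)+u(gh), gh)$, which must equal $\theta(n,g) \cdot_{f_2} \theta(m,h) = (n+u(g)+m+u(h)+f_2(g,h), gh)$; these agree precisely because of the coboundary relation satisfied by $u$. One checks $\theta \iota_1 = \iota_2$ (as $u(id)=0$, which follows from $f_i(id,id)=0$) and $q_2 \theta = q_1$, so $\theta$ is an equivalence of central extensions. The key point is then that $\theta$ preserves translation numbers: since the floor of $(n,g)$ is $n$ and $\theta$ shifts the $\ZZ$-coordinate by the \emph{bounded} quantity $u(g)$, we have $[\theta(\tilde g)^k]_{P_{f_2}} - [\tilde g^k]_{P_{f_1}} = u(g^k)$ when $\tilde g = (n,g)$, which is bounded uniformly in $k$, so dividing by $k$ and taking $k \to \infty$ gives $\tau_{\tilde g}(P_{f_1}) = \tau_{\theta(\tilde g)}(P_{f_2})$.

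For the converse, suppose an equivalence $\theta$ of central extensions with matching translation numbers exists. Any equivalence of central extensions is of the form $\theta(n,g) = (n + u(g), g)$ for some function $u: G \to \ZZ$ with $u(id)=0$, and the homomorphism property forces the coboundary relation $f_1(g,h) - f_2(g,h) = u(gh) - u(g) - u(h)$. It remains only to show $u$ is bounded, for then $[f_1] = [f_2]$ in bounded cohomology. This is where the translation-number hypothesis does its work and is the main obstacle: I would use the matching of translation numbers together with the quasimorphism property of $\tau$ to control $u$. Concretely, writing $\widetilde g = (0,g)$, the equality $\tau_{(0,g)}(P_{f_1}) = \tau_{(u(g),g)}(P_{f_2}) = u(g) + \tau_{(0,g)}(P_{f_2})$ shows $u(g) = \tau_{(0,g)}(P_{f_1}) - \tau_{(0,g)}(P_{f_2})$ lies within bounded distance of the difference of two homogeneous quasimorphisms evaluated at the same element; since translation number differs from any genuine homomorphism by a bounded defect and the two extensions both surject onto $G$, this difference is a bounded function of $g$. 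The delicate part is justifying this boundedness rigorously rather than heuristically, since $\tau$ itself is generally unbounded on $\widetilde G_{f_i}$ — the resolution is that $u$ is a \emph{coboundary} realizing the difference of two cocycles both bounded by $1$, so $u(gh)-u(g)-u(h)$ is uniformly bounded, and a standard argument shows a $0$-cochain with uniformly bounded coboundary and the additional constraint supplied by the translation-number equality must itself be bounded.
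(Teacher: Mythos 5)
Your forward direction is correct and is essentially the paper's argument: define $\theta(n,g)=(n+u(g),g)$, observe that $\theta(\tilde g)^k$ and $\tilde g^k$ have floors differing by $u(g^k)$, and use boundedness of $u$ to kill this difference after dividing by $k$. (One harmless sign slip: for the $\theta$ you wrote down to be a homomorphism you need $f_1(g,h)-f_2(g,h)=u(g)+u(h)-u(gh)$, the opposite sign of the coboundary relation you stated; replace $u$ by $-u$ throughout.)

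The converse, however, has a genuine gap at its crucial step, the boundedness of $u$. Your identity $u(g)=\tau_{(0,g)}(P_{f_1})-\tau_{(0,g)}(P_{f_2})$ is exactly the right starting point (it is the paper's computation in compressed form, and the intermediate step $\tau_{(u(g),g)}(P_{f_2})=u(g)+\tau_{(0,g)}(P_{f_2})$ is fine since $(u(g),g)=(1,\id)^{u(g)}(0,g)$ with $(1,\id)$ central of translation number $1$), but neither of your two attempts to conclude from it is valid as stated. The quasimorphism argument fails: translation number is \emph{not} within bounded distance of any homomorphism (a homogeneous quasimorphism at bounded distance from a homomorphism must equal it), and the difference of two quasimorphisms is again only a quasimorphism, which can be unbounded, so ``this difference is a bounded function of $g$'' does not follow from those generalities. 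The fallback is not an argument either: a $0$-cochain with uniformly bounded coboundary need not be bounded (any unbounded homomorphism $G\to\ZZ$ has vanishing coboundary), so the entire weight rests on ``the additional constraint supplied by the translation-number equality,'' which is precisely the thing left unproved. The fix is one line, and it is what the paper does: from the multiplication rule in $\widetilde G_{f_i}$ one has $(0,g)^k=\bigl(\sum_{j=1}^{k-1}f_i(g^j,g),\,g^k\bigr)$, and since the floor of $(m,h)$ is $m$, this gives $\tau_{(0,g)}(P_{f_i})=\lim_{k\to\infty}\frac{1}{k}\sum_{j=1}^{k-1}f_i(g^j,g)$, which lies in $[0,1]$ because each $f_i$ takes values in $\{0,1\}$. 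Your identity then yields $|u(g)|\leq 1$ immediately, with no quasimorphism theory needed.
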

\begin{proof}  Throughout this proof we will write $P_i$ in place of $P_{f_i}$ and $\widetilde G_i$ in place of $\widetilde G_{f_i}$ in order to simplify notation.

Suppose $f_1(g,h) - f_2(g,h) = d(g) + d(h) - d(gh)$ for some bounded function $d:G \to \ZZ$ (so $[f_1] = [f_2] \in H^2_b(G;\ZZ)$). Then $\theta:\widetilde G_1 \to \widetilde G_2$ given by $\theta(n,g) = (n + d(g),g)$ is an equivalence of central extensions. Let $(n,g) \in \widetilde G_1$. For $k \in \ZZ_{>0}$, let $(n,g)^k = (m_k,g^k)$. Then
\begin{align*}
\tau_{\theta(n,g)}(P_2) - \tau_{(n,g)}(P_1) &= \lim_{k \to \infty}\frac{1}{k}([\theta(n,g)^k]_{P_2} - [(n,g)^k]_{P_1}) \\
&= \lim_{k \to \infty}\frac{1}{k}([(m_k + d(g^k), g^k)]_{P_2} - [(m_k,g^k)]_{P_1}) \\
&= \lim_{k \to \infty}\frac{1}{k}(m_k + d(g^k) - m_k) \\
&= \lim_{k \to \infty}\frac{d(g^k)}{k}\\
&= 0
\end{align*}
since $d$ is bounded. Therefore $\tau_{\theta(n,g)}(P_2) = \tau_{(n,g)}(P_1)$.

Conversely, suppose $\theta:\widetilde G_1 \to \widetilde G_2$ is an equivalence of central extensions satisfying $\tau_{(n,g)}(P_1) = \tau_{\theta(g,n)}(P_2)$ for all $(n,g) \in \widetilde G_1$. Then $\theta$ takes the form $\theta(n,g) = (n + d(g),g)$ for a function $d:G \to \ZZ$ satisfying $f_1(g,h) - f_2(g,h) = d(g) + d(h) - d(gh)$ for all $g,h \in G$. We will show $d$ is bounded, and in particular, that $-1 \leq d(g) \leq 1$ for all $g \in G$.

Let $(n,g) \in \widetilde G_1$ and $k \in \ZZ_{>0}$. Then
\[
(n,g)^k = \left(kn + \sum_{i=1}^{k-1} f_1(g^i,g),g^k\right)\]
and
\[\theta((n,g))^k = \left(kn + kd(g) +  \sum_{i=1}^{k-1} f_2(g^i,g),g^k\right).
\]
The assumption $\tau_{(n,g)}(P_1) = \tau_{\theta(n,g)}(P_2)$ implies
\begin{align*}
0 &= \lim_{k\to \infty} \frac{1}{k} \left(kn + \sum_{i=1}^{k-1} f_1(g^i,g)\right) - \lim_{k\to \infty} \frac{1}{k} \left(kn + kd(g) +  \sum_{i=1}^{k-1} f_2(g^i,g)\right) \\
&= \lim_{k \to \infty} \frac{1}{k} \left(-kd(g) + \sum_{i=1}^{k-1}(f_1(g^i,g) - f_2(g^i,g))\right) \\
&= -d(g) + \lim_{k \to \infty} \frac{1}{k}\sum_{i=1}^{k-1}(f_1(g^i,g) - f_2(g^i,g)).
\end{align*}
Since $f_1(g^i,g),f_2(g^i,g) \in \{0,1\}$, we have
\[
-(k-1) \leq \sum_{i=1}^{k-1}(f_1(g^i,g) - f_2(g^i,g)) \leq k-1
\]
so 
\[
-1 \leq \lim_{k\to\infty} \frac{1}{k} \sum_{i=1}^{k-1}(f_1(g^i,g) - f_2(g^i,g)) \leq 1.
\]
Therefore $-1 \leq d(g) \leq 1$ for all $g \in G$, as desired.
\end{proof}

\subsection{Quotients of the braid group, circular orderings and lifts}
For $n \geq 2$ set $\delta_n = \sigma_1 \sigma_2 \cdots \sigma_{n-1}$.  Recall that the Garside element $\Delta_n$, whose square generates the centre of $B_n$, can be expressed as  $\Delta_n = \delta_n \delta_{n-1} \cdots \delta_2$.  Moreover,  we have $\delta_n^n = \Delta_n^2$ \cite[Lemma 4.4]{DDRW}.  The following lemmas allow us to bound the translation number of $\Delta_{n-1} \in B_n$.

\begin{lemma}
\label{prop1}
Suppose that $n \geq 3$ is an integer.  Then the following hold for all $k \in \mathbb{Z}$:
\begin{enumerate}
\item $\Delta_{n-1}^{-2(k+1)}\Delta_{n}^{2k}=\Delta_{n-1}^{-2}(\Delta_n \delta_n \Delta_n^{-1} \delta_n)^k$, and
    \item $\Delta_{n-1}^{2(k+1)}\Delta_{n}^{2k} = \Delta_{n-1}^2(\delta_n^{-1} \Delta_n \delta_n^{n-1}\Delta_n)^k$.
    
\end{enumerate}
\end{lemma}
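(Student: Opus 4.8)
The plan is to prove both identities by induction on $k$, treating the nonnegative and negative cases via the same recursive structure. The key algebraic facts I would lean on are the relation $\delta_n^n = \Delta_n^2$ and the well-known conjugation behaviour of the Garside element, namely that $\Delta_n$ conjugates $\sigma_i$ to $\sigma_{n-i}$, together with the fact that $\Delta_{n-1}^2$ is central in $B_{n-1}$ and commutes with everything in the subgroup $B_{n-1} \subset B_n$. Since $\Delta_n^2$ is central in $B_n$, it commutes freely with every factor, which is what makes the telescoping work.

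For part (1), I would verify the base case $k=0$ directly: the right-hand side reduces to $\Delta_{n-1}^{-2}$ and the left-hand side is $\Delta_{n-1}^{-2}\Delta_n^0 = \Delta_{n-1}^{-2}$, so they agree. For the inductive step, assuming the identity at $k$, I would multiply both sides by $\Delta_{n-1}^{-2}\Delta_n^2$ on the appropriate side and use centrality of $\Delta_n^2$ to move it past the accumulated product, showing that the product $(\Delta_n \delta_n \Delta_n^{-1}\delta_n)$ is precisely the factor that advances the exponent of $\Delta_n^2$ by one step while absorbing one extra power of $\Delta_{n-1}^{-2}$. The crucial computational identity I expect to need is that $\Delta_{n-1}^{-2}\,\Delta_n^2 = \Delta_{n-1}^{-2}(\Delta_n\delta_n\Delta_n^{-1}\delta_n)$ after accounting for one $\Delta_{n-1}^2$, i.e.\ that $\Delta_n\delta_n\Delta_n^{-1}\delta_n = \Delta_{n-1}^2\Delta_n^2\Delta_{n-1}^{-2}$ up to the bookkeeping — so I would first isolate and verify this single-step relation, after which the induction is a formal telescoping argument. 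Part (2) follows the identical scheme with $\Delta_{n-1}^{2}$ in place of $\Delta_{n-1}^{-2}$ and the step factor $(\delta_n^{-1}\Delta_n\delta_n^{n-1}\Delta_n)$, where I would use $\delta_n^n = \Delta_n^2$ to simplify $\delta_n^{n-1} = \delta_n^{-1}\Delta_n^2$ and reduce the factor to a recognisable central combination.

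The main obstacle I anticipate is pinning down the precise single-step relations, that is, showing $\Delta_n\delta_n\Delta_n^{-1}\delta_n$ and $\delta_n^{-1}\Delta_n\delta_n^{n-1}\Delta_n$ each equal the "correct" element of the form $\Delta_{n-1}^{2\epsilon}\Delta_n^2$ up to the compensating powers. This requires carefully tracking how $\Delta_n$ conjugates $\delta_n = \sigma_1\cdots\sigma_{n-1}$ and how the resulting word relates to $\Delta_{n-1} = \delta_{n-1}\cdots\delta_2$ sitting inside $B_{n-1}$; the interaction between the full-twist $\Delta_n$ and the subsurface twist $\Delta_{n-1}$ is where all the genuine content lives. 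Once that single conjugation identity is established, everything else is routine telescoping, so I would front-load the verification of these two relations and then present the induction compactly.
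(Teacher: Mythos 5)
Your overall architecture --- isolate a single-step relation, then telescope --- is exactly the paper's, and you correctly identify that all the content lies in expressing $\Delta_n \delta_n \Delta_n^{-1}\delta_n$ and $\delta_n^{-1}\Delta_n\delta_n^{n-1}\Delta_n$ as suitable products of $\Delta_{n-1}^{\pm2}$ with $\Delta_n^2$. One minor structural difference: no induction is needed at all. Since $\Delta_n^2$ is central, $(\Delta_{n-1}^{\mp2}\Delta_n^2)^k = \Delta_{n-1}^{\mp2k}\Delta_n^{2k}$ holds for every $k \in \mathbb{Z}$ simultaneously, which is how the paper dispatches both signs of $k$ in one line; your induction with separate nonnegative/negative cases would work but is heavier than necessary.

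The genuine gap is that the pivotal step is both missing and, where you make it explicit, wrong. You never verify the single-step relations, and your one concrete guess, $\Delta_n\delta_n\Delta_n^{-1}\delta_n = \Delta_{n-1}^2\Delta_n^2\Delta_{n-1}^{-2}$, is false: centrality of $\Delta_n^2$ collapses the right-hand side to $\Delta_n^2$, which would force $\Delta_{n-1} = \Delta_{n-1}^{-1}$, impossible for $n \geq 3$. The correct identities are $\Delta_{n-1}^{-2}\Delta_n^2 = \Delta_n\delta_n\Delta_n^{-1}\delta_n$ and $\Delta_{n-1}^{2}\Delta_n^2 = \delta_n^{-1}\Delta_n\delta_n^{n-1}\Delta_n$. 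Moreover, the tool you reach for (the conjugation $\Delta_n\sigma_i\Delta_n^{-1} = \sigma_{n-i}$) is not what proves them; the needed ingredient is the factorisation $\Delta_n = \delta_n\delta_{n-1}\cdots\delta_2 = \delta_n\Delta_{n-1}$, equivalently $\delta_n^{-1}\Delta_n = \Delta_{n-1}$ and $\Delta_n^{-1}\delta_n = \Delta_{n-1}^{-1}$, which is how the paper's computation runs. With it, both verifications take two lines: $\Delta_n^2\Delta_{n-1}^{-2} = \Delta_n(\Delta_n\Delta_{n-1}^{-1})\Delta_{n-1}^{-1} = \Delta_n\delta_n\Delta_{n-1}^{-1} = \Delta_n\delta_n\Delta_n^{-1}\delta_n$; and, using $\delta_n^{n-1} = \delta_n^{-1}\Delta_n^2$ together with centrality (this part of your plan is sound), $\delta_n^{-1}\Delta_n\delta_n^{n-1}\Delta_n = (\delta_n^{-1}\Delta_n)(\delta_n^{-1}\Delta_n)\Delta_n^2 = \Delta_{n-1}^2\Delta_n^2$. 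As submitted, the proposal defers exactly the step it acknowledges to be the whole content of the lemma, and the formulation it offers in place of that step would fail.
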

\begin{proof}
To show (1), we first compute
 \begin{align*}
     \Delta_{n-1}^{-2}\Delta_n^2 &=\Delta_n^2 \Delta_{n-1}^{-2} \\
     &=(\delta_n \cdots \delta_2)(\delta_n \cdots \delta_2)(\delta_2^{-1} \cdots \delta_{n-1}^{-1})(\delta_2^{-1} \cdots \delta_{n-1}^{-1})\\
&= (\delta_n \cdots \delta_2)\delta_n (\delta_2^{-1} \cdots \delta_{n-1}^{-1})\\
&= \Delta_n \delta_n \Delta_n^{-1} \delta_n.\\
\end{align*}
Now we observe that 
\begin{align*}
    \Delta_{n-1}^{-2(k+1)}\Delta_{n}^{2k}&= \Delta_{n-1}^{-2}(\Delta_{n-1}^{-2}\Delta_{n}^{2})^k\\ 
    &= \Delta_{n-1}^{-2}(\Delta_{n}^{2}\Delta_{n-1}^{-2})^k\\  
    &= \Delta_{n-1}^{-2}(\Delta_n \delta_n \Delta_n^{-1} \delta_n)^k.
    \end{align*}
 To show (2), we begin with
 \begin{align*}
     \Delta_{n-1}^{2}\Delta_n^2 &=\Delta_n^2 \Delta_{n-1}^{2} \\
     &=\delta_n^n(\delta_{n-1} \cdots \delta_2)(\delta_{n-1} \cdots \delta_2)\\
&= \delta_n^{n-1}\Delta_n (\delta_{n-1} \cdots \delta_2)\\
&= \delta_n^{-1} \Delta_n \delta_n^{n} (\delta_{n-1} \cdots \delta_2)\\
&= \delta_n^{-1} \Delta_n \delta_n^{n-1} \Delta_n.\\
\end{align*}
Now we write 
\begin{align*}
    \Delta_{n-1}^{2(k+1)}\Delta_{n}^{2k}&= \Delta_{n-1}^{2}(\Delta_{n-1}^{2}\Delta_{n}^{2})^k\\ 
    &= \Delta_{n-1}^{2}(\Delta_{n}^{2}\Delta_{n-1}^{2})^k\\  
    &= \Delta_{n-1}^{2}(\delta_n^{-1} \Delta_n \delta_n^{n-1} \Delta_n)^k.
    \end{align*}    
\end{proof}

\begin{proposition}\label{prop2}
    If $(G, <)$ is a countable left-orderable group and $x, y \in G$ are positive and $<$-cofinal, then the product $xy$ is positive and $<$-cofinal.
\end{proposition}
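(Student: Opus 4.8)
The plan is to reduce the statement to a dynamical fact about the dynamic realisation $\rho_P : G \to \mathrm{Homeo}_+(\mathbb{R})$ of the ordering $<$, which is available since $G$ is countable. Throughout I write $x_0 = t(\mathrm{id})$ for the basepoint of the tight embedding, so that $\rho_P(g)(x_0) = t(g)$ and $a < b \iff t(a) < t(b) \iff \rho_P(a)(x_0) < \rho_P(b)(x_0)$. Positivity of $xy$ is immediate and can be disposed of first: from $\mathrm{id} < y$ and left-invariance we get $x = x\cdot\mathrm{id} < xy$, and combining with $\mathrm{id} < x$ gives $\mathrm{id} < xy$.

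The heart of the matter is cofinality, and here the plan is to prove and use the following characterisation: a positive element $g$ is $<$-cofinal if and only if $\rho_P(g)$ is fixed-point free, in which case $\rho_P(g)(s) > s$ for every $s \in \mathbb{R}$. For the forward direction I would show a fixed point $p$ of $\rho_P(g)$ is impossible. Since $\mathrm{id} < g$ we have $\rho_P(g)(x_0) = t(g) > x_0$, so $x_0 \neq p$; if $x_0 < p$, then order-preservation together with $\rho_P(g)(p) = p$ forces $t(g^k) = \rho_P(g)^k(x_0) < p$ for all $k \in \mathbb{Z}$, whence cofinality of $g$ (which says every $h$ satisfies $h < g^k$ for some $k$) yields $t(G) \subseteq (-\infty, p)$, with the case $x_0 > p$ symmetric. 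This contradicts the fact that a tight embedding has $t(G)$ unbounded above and below: a ray of $\mathbb{R}\setminus t(G)$ cannot lie inside any gap-interval $(t(g),t(h))$, so tightness forbids $t(G)$ from being bounded. Being fixed-point free and orientation preserving, $\rho_P(g)(s)-s$ has constant sign, which is positive because $\rho_P(g)(x_0) > x_0$.

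Finally I would assemble the proof. Applying the forward direction to the cofinal positive elements $x$ and $y$ gives $\rho_P(x)(s) > s$ and $\rho_P(y)(s) > s$ for all $s$, hence
\[
\rho_P(xy)(s) = \rho_P(x)\big(\rho_P(y)(s)\big) > \rho_P(y)(s) > s \qquad (s \in \mathbb{R}),
\]
so $\rho_P(xy)$ is itself fixed-point free and moves every point to the right. The sequence $t((xy)^k) = \rho_P(xy)^k(x_0)$ is then strictly increasing; were it bounded above, its limit would be a fixed point of $\rho_P(xy)$ by continuity, a contradiction, so $t((xy)^k) \to +\infty$, and symmetrically $t((xy)^{-k}) \to -\infty$. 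Consequently, for every $h \in G$ there is $k$ with $(xy)^{-k} < h < (xy)^k$, which is exactly cofinality of $xy$.

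The main obstacle is the forward implication of the characterisation. Because $<$ is only left-invariant, $g > \mathrm{id}$ does \emph{not} give $\rho_P(g)(s) > s$ at points $s = t(h)$ other than the basepoint (that would require $h^{-1}gh > \mathrm{id}$), so the uniform inequality cannot be read off directly and must instead be extracted from the absence of fixed points; it is cofinality, via the unboundedness of $t(G)$ forced by tightness, that rules those fixed points out. An alternative purely algebraic route runs into exactly this difficulty, since the set of elements bounded by powers of $xy$ need not be a subgroup in the absence of right-invariance, which is why I would favour the dynamical argument above.
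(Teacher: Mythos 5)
Your proof is correct and follows essentially the same route as the paper's: both pass to the dynamic realisation, use the characterisation that a positive element $g$ is $<$-cofinal precisely when $\rho_P(g)$ acts without fixed points (hence moves every point of $\mathbb{R}$ to the right), and conclude by composing the two homeomorphisms. The only difference is that the paper asserts this characterisation without proof, whereas you derive it from tightness of the embedding --- a welcome amplification of detail, but not a different approach.
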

\begin{proof} 


Suppose that $P$ is the positive cone of $<$, and note that $g\in G$ is $<$-cofinal if and only if $\rho_P(g)$ is without fixed points.  Therefore if $x, y \in G$ are positive and $<$-cofinal then $\rho_P(x)(a)>a$ and $\rho_P(y)(a)>a$ for any $a\in \mathbb{R}$. Thus $$\rho_P(xy)(a)=\rho_P(x)(\rho_P(y)(a))>\rho_P(y)(a)>a$$ for all $a\in \mathbb{R}$. Therefore $xy$ is $<$-cofinal. 
\end{proof}
\begin{proposition}\label{prop:Cext-Gn-Bn}
    If $<$ is a left ordering of $B_n$, $n \geq 3$ such that $\Delta_n^2>1$, then there exist positive integers $k, 
    \ell$ such that $\Delta_{n-1}^{2(k+1)}<\Delta_n^{2k}$ and $\Delta_n^{-2 \ell} < \Delta_{n-1}^{2(\ell+1)}$.
\end{proposition}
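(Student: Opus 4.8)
The plan is to convert each of the two inequalities into the assertion that a specific element of $B_n$ is positive, using left-invariance of the order together with the centrality of $\Delta_n^2 = \delta_n^n$, and then to recognise the resulting elements via Lemma~\ref{prop1}. Explicitly, $\Delta_{n-1}^{2(k+1)} < \Delta_n^{2k}$ is equivalent to $\Delta_{n-1}^{-2(k+1)}\Delta_n^{2k} > \mathrm{id}$, while $\Delta_n^{-2\ell} < \Delta_{n-1}^{2(\ell+1)}$ is (after left-multiplying by the central element $\Delta_n^{2\ell}$) equivalent to $\Delta_{n-1}^{2(\ell+1)}\Delta_n^{2\ell} > \mathrm{id}$. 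By Lemma~\ref{prop1} these two elements are $\Delta_{n-1}^{-2}w^k$ and $\Delta_{n-1}^{2}u^\ell$, where $w = \Delta_n\delta_n\Delta_n^{-1}\delta_n$ and $u = \delta_n^{-1}\Delta_n\delta_n^{n-1}\Delta_n$. So it suffices to prove that $w$ and $u$ are each positive and cofinal: cofinality of $w$ (resp.\ $u$) yields a power $w^k > \Delta_{n-1}^2$ (resp.\ $u^\ell > \Delta_{n-1}^{-2}$), and these are exactly the two desired inequalities.

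The building blocks are $\delta_n$ and its conjugate $\eta := \Delta_n\delta_n\Delta_n^{-1}$. First I would record that $\delta_n^n = \Delta_n^2$ and $\eta^n = \Delta_n\delta_n^n\Delta_n^{-1} = \Delta_n^2$, the latter because $\Delta_n^2$ is central. Since the hypothesis gives $\Delta_n^2 > \mathrm{id}$, and since in a left ordering $x < \mathrm{id}$ forces $x^n < \mathrm{id}$, both $\delta_n$ and $\eta$ must be positive (they are nontrivial, as their $n$-th powers are). Moreover $\Delta_n^2$ is absolutely cofinal, so the subgroups $\langle \delta_n\rangle$ and $\langle \eta\rangle$, each containing $\langle \Delta_n^2\rangle$, are cofinal; hence $\delta_n$, $\eta$, and all of their positive powers are positive and cofinal.

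For the first inequality, $w = \eta\,\delta_n$ is a product of the positive cofinal elements $\eta$ and $\delta_n$, so Proposition~\ref{prop2} shows $w$ is positive and cofinal. Choosing $k > 0$ with $w^k > \Delta_{n-1}^2$ and applying Lemma~\ref{prop1}(1) gives $\Delta_{n-1}^{-2(k+1)}\Delta_n^{2k} > \mathrm{id}$, i.e.\ $\Delta_{n-1}^{2(k+1)} < \Delta_n^{2k}$.

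The second inequality is the crux, and the main obstacle is that $u = \delta_n^{-1}\Delta_n\delta_n^{n-1}\Delta_n$ is not visibly positive. The key step I would carry out is to simplify $u$ into a manifestly positive form: using $\Delta_n\delta_n^{n-1}\Delta_n = (\Delta_n\delta_n^{n-1}\Delta_n^{-1})\Delta_n^2 = \eta^{n-1}\Delta_n^2$ together with the centrality of $\Delta_n^2 = \delta_n^n$, one computes $u = \Delta_n^2\delta_n^{-1}\eta^{n-1} = \delta_n^{n-1}\eta^{n-1}$. This exhibits $u$ as a product of two positive cofinal elements, so Proposition~\ref{prop2} again applies; choosing $\ell > 0$ with $u^\ell > \Delta_{n-1}^{-2}$ and applying Lemma~\ref{prop1}(2) yields $\Delta_{n-1}^{2(\ell+1)}\Delta_n^{2\ell} > \mathrm{id}$, i.e.\ $\Delta_n^{-2\ell} < \Delta_{n-1}^{2(\ell+1)}$. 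Once the rewriting $u = \delta_n^{n-1}\eta^{n-1}$ is in hand everything else is routine, so I expect that single algebraic identity to carry the real content of the argument.
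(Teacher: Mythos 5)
Your proof is correct, and its skeleton is exactly the paper's: reduce via Lemma \ref{prop1} to showing that $w = \Delta_n\delta_n\Delta_n^{-1}\delta_n$ and $u = \delta_n^{-1}\Delta_n\delta_n^{n-1}\Delta_n$ are positive and cofinal, get positivity and cofinality of the building blocks because they are roots of the positive, absolutely cofinal element $\Delta_n^2$, and combine them with Proposition \ref{prop2}. Your treatment of $w$ is identical to the paper's. The only divergence is at $u$, the step you flagged as the crux: you use centrality of $\Delta_n^2 = \delta_n^n$ to rewrite $u = \delta_n^{-1}\eta^{n-1}\Delta_n^2 = \delta_n^{n-1}\eta^{n-1}$ (the identity is correct), exhibiting $u$ as a product of two positive cofinal elements. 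The paper instead avoids any rewriting: it simply regroups the word as $u = (\delta_n^{-1}\Delta_n\delta_n)\,\delta_n^{n-2}\,\Delta_n$ and notes that each of the three factors --- a conjugate of $\Delta_n$, the power $\delta_n^{n-2}$ (this is where $n \geq 3$ enters), and $\Delta_n$ itself --- is positive and cofinal, since $\Delta_n$ and its conjugates square to the central element $\Delta_n^2$. So the algebraic identity you expected to carry the real content can be dispensed with entirely; both routes are equally short, with yours having the mild advantage of reusing the factor $\eta = \Delta_n\delta_n\Delta_n^{-1}$ from the first inequality, and the paper's requiring no computation at all beyond parsing the word.
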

\begin{proof}
By Lemma \ref{prop1}(1), we have that  $$\Delta_{n-1}^{-2(m+1)}\Delta_{n}^{2m}=\Delta_{n-1}^{-2}(\Delta_n \delta_n \Delta_n^{-1} \delta_n)^m$$
    for all integers $m$. So to show that there exists a positive $k$ such that $\Delta_{n-1}^{-2(k+1)}\Delta_{n}^{2k}>id$, it suffices to show that the product  $\Delta_n \delta_n \Delta_n^{-1} \delta_n$ is positive and cofinal, for then there exists an integer $k>0$ such that $\Delta_{n-1}^{-2}(\Delta_n \delta_n \Delta_n^{-1} \delta_n)^k$ is positive. 
    
Note that $(\Delta_n \delta_n \Delta_n^{-1})^n=\Delta_n^2$ and $\delta_n^n=\Delta_n^2$. Since $\Delta_n^2$ is positive and confinal, both $\Delta_n \delta_n \Delta_n^{-1}$ and $\delta_n$ are positive and cofinal. Therefore, their product $(\Delta_n \delta_n \Delta_n^{-1})\delta_n$ is positive and cofinal by Proposition  \ref{prop2}.  This proves the existence of the required $k$.

Now by Lemma \ref{prop1}(2), we have
$$\Delta_{n-1}^{2(m+1)}\Delta_{n}^{2m} = \Delta_{n-1}^2(\delta_n^{-1} \Delta_n \delta_n^{n-1}\Delta_n)^m$$
for all integers $m$.  As above, in order to show the existence of the required $\ell>0$ it suffices to prove that $\delta_n^{-1} \Delta_n \delta_n^{n-1}\Delta_n$ is positive and cofinal.  Recall that $n \geq 3$, so by writing $\delta_n^{-1} \Delta_n \delta_n^{n-1}\Delta_n = (\delta_n^{-1}\Delta_n\delta)\delta_n^{n-2}\Delta_n$, we can observe that $\delta_n^{-1} \Delta_n \delta_n^{n-1}\Delta_n$ is a product of the positive, cofinal elements $\delta_n^{-1}\Delta_n\delta$, $\delta_n^{n-2}$ and $\Delta_n$, so is itself positive and cofinal as well.
\end{proof}

\begin{corollary}\label{cor:bounded-translation-number}
Suppose that $P \in B_n$ is the positive cone of a left ordering with $\Delta_n^2 \in P$.  Then $-1 < \tau_{\Delta_{n-1}}(P)< 1$.
\end{corollary}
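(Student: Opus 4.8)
The plan is to reduce the claim to a two-sided bound on $\tau_{\Delta_{n-1}^2}(P)$ and then feed in the two comparisons supplied by Proposition \ref{prop:Cext-Gn-Bn}. Since the translation number is homogeneous --- directly from the definition one has $\tau_{g^m}(P) = m\,\tau_g(P)$ --- we get $\tau_{\Delta_{n-1}^2}(P) = 2\,\tau_{\Delta_{n-1}}(P)$, so it suffices to prove $-1 < \tau_{\Delta_{n-1}^2}(P) < 1$, which is in fact stronger than what is asked. Throughout I would write $a = \Delta_{n-1}^2$ and $z = \Delta_n^2$, using that $z$ is central, positive and $<$-cofinal, so that the floor $[\,\cdot\,]_P$ and hence $\tau(P)$ are computed with respect to $z$; in particular $\tau_z(P) = 1$. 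The case $n = 2$ is vacuous since then $\Delta_{n-1} = \Delta_1 = \id$, so the argument is run for $n \geq 3$, which is exactly the range where Proposition \ref{prop:Cext-Gn-Bn} applies.

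The first structural observation is that $a$ and $z$ commute (as $z$ is central), so the subgroup $\langle a, z \rangle$ is abelian and the restriction of $<$ to it is a two-sided invariant order. The upshot is the elementary fact that for $u, v \in \langle a, z\rangle$ with $u < v$ one has $u^{j} < v^{j}$ for every $j \geq 1$ (from $u^{-1}v \in P$ and commutativity, $(u^{-1}v)^j = u^{-j}v^{j} \in P$). Now invoke Proposition \ref{prop:Cext-Gn-Bn} to fix positive integers $k, \ell$ with $a^{k+1} < z^{k}$ and $z^{-\ell} < a^{\ell+1}$. Raising the first comparison to the $j$-th power gives $a^{(k+1)j} < z^{kj}$, whence $[a^{(k+1)j}]_P < kj$ by the definition of the floor; evaluating $\lim_m [a^m]_P/m$ along the subsequence $m = (k+1)j$ then yields $\tau_a(P) \leq \tfrac{k}{k+1} < 1$. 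Symmetrically, raising $z^{-\ell} < a^{\ell+1}$ to the $j$-th power gives $z^{-\ell j} < a^{(\ell+1)j}$, so $[a^{(\ell+1)j}]_P \geq -\ell j$, and the same subsequence argument gives $\tau_a(P) \geq -\tfrac{\ell}{\ell+1} > -1$. Combining, $-1 < \tau_a(P) < 1$, and halving gives $-\tfrac12 < \tau_{\Delta_{n-1}}(P) < \tfrac12$, comfortably inside $(-1,1)$.

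The only genuinely delicate points are bookkeeping ones, and I do not anticipate a substantial obstacle. I must ensure the inequalities remain strict: the strictness of the two comparisons coming out of Proposition \ref{prop:Cext-Gn-Bn} is precisely what pins $\tfrac{k}{k+1}$ and $-\tfrac{\ell}{\ell+1}$ strictly inside $(-1,1)$. I must also justify reading off $\tau_a(P)$ along the arithmetic subsequence $m = (k+1)j$ (resp. $m=(\ell+1)j$), which is legitimate because the defining limit for $\tau_a(P)$ exists and therefore agrees with its value along any subsequence. All the real work is carried by Proposition \ref{prop:Cext-Gn-Bn}; the monotonicity of the floor under the abelian order on $\langle a, z\rangle$ and the homogeneity of $\tau$ are routine, so the estimate assembles directly.
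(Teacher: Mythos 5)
Your proof is correct and follows essentially the same route as the paper's: both invoke Proposition \ref{prop:Cext-Gn-Bn}, raise the two inequalities to $j$-th powers (valid since $\Delta_n^2$ is central), convert them into bounds on floors, and evaluate $\tau_{\Delta_{n-1}^2}(P)$ along the arithmetic subsequences $m=(k+1)j$ and $m=(\ell+1)j$ to get $-\tfrac{\ell}{\ell+1}\leq \tau_{\Delta_{n-1}^2}(P)\leq \tfrac{k}{k+1}$. Your explicit use of homogeneity $\tau_{g^m}(P)=m\,\tau_g(P)$ to pass between $\Delta_{n-1}$ and $\Delta_{n-1}^2$ is a nice touch that cleanly resolves a mismatch the paper leaves silent, since its stated bound is for $\tau_{\Delta_{n-1}}(P)$ while its proof (and its later applications) bound $\tau_{\Delta_{n-1}^2}(P)$.
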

\begin{proof}
By Proposition \ref{prop:Cext-Gn-Bn} we can choose $k>0$ such that $\Delta_{n-1}^{2(k+1)} <_P \Delta_n^{2k}$.  But then $\Delta_{n-1}^{2m(k+1)} <_P \Delta_n^{2mk}$ for all $m>0$, and so $[ \Delta_{n-1}^{2m(k+1)} ]_P < mk$.  Thus 
\[ \tau_{\Delta_{n-1}^2}(P) = \lim_{m \to \infty}\frac{[ \Delta_{n-1}^{2m(k+1)} ]_P}{m(k+1)} \leq \frac{k}{k+1} <1.
\]
Similarly by using Proposition \ref{prop:Cext-Gn-Bn} to choose $\ell>0$ such that $\Delta_n^{-2 \ell} < \Delta_{n-1}^{2(\ell+1)}$ we can show that $\tau_{\Delta_{n-1}^2}(P) > -1$.
\end{proof}




\begin{proposition} \label{prop:ext-is-bn}
Set $G_n = B_n/\langle \Delta_n^2 \rangle$ and let $f$ be an arbitrary circular ordering of $G_n$.  Then the central extension $\widetilde{(G_n)}_f$ is isomorphic to $B_n$.
\end{proposition}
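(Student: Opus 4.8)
The plan is to build an explicit isomorphism by lifting the braid generators through the extension. Throughout write $z = (1,\mathrm{id})$ for the canonical central generator of the kernel of $q\colon \widetilde{(G_n)}_f \to G_n$, and let $q_0\colon B_n \to G_n$ denote the quotient map, whose kernel is $Z(B_n) = \langle \Delta_n^2\rangle$. Both $B_n$ and $\widetilde{(G_n)}_f$ are central extensions of $G_n$ by $\mathbb{Z}$. First I would pull back the extension $\widetilde{(G_n)}_f \to G_n$ along $q_0$ to obtain a central extension of $B_n$ by $\mathbb{Z}$; since $H^2(B_n;\mathbb{Z}) = 0$, this pullback splits, and a splitting provides a homomorphism $s\colon B_n \to \widetilde{(G_n)}_f$ with $q\circ s = q_0$. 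Concretely, $s$ amounts to choosing lifts $\tilde\sigma_i$ of the $\bar\sigma_i$ that satisfy the braid relations exactly.

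Next I would analyze $s$. Its kernel lies in $\ker q_0 = \langle \Delta_n^2\rangle$, so writing $s(\Delta_n^2) = z^{c}$ for an integer $c$, the map $s$ is injective as soon as $c \neq 0$, and its image has index $|c|$ in $\widetilde{(G_n)}_f$ (indeed $\widetilde{(G_n)}_f = \langle s(B_n), z\rangle$ while $s(B_n)\cap\langle z\rangle = \langle z^{c}\rangle$). Thus the statement reduces to the single claim $c = \pm 1$: in that case $z \in s(B_n)$, so $s$ is onto and hence the desired isomorphism. That $c \neq 0$ is immediate from left-orderability, since $\widetilde{(G_n)}_f$ is torsion-free while $c=0$ would make $s(\delta_n)$ a nontrivial element of order $n$, where $\delta_n = \sigma_1\cdots\sigma_{n-1}$ and $\delta_n^n = \Delta_n^2$.

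The heart of the proof is therefore to pin down $c = \pm 1$. I would use the two conjugacy classes of periodic braids that together generate $B_n$: the element $\delta_n$ with $\delta_n^n = \Delta_n^2$, and its companion $\epsilon_n = \sigma_1\delta_n$ with $\epsilon_n^{\,n-1} = \Delta_n^2$. Their images satisfy $s(\delta_n)^n = s(\epsilon_n)^{n-1} = z^{c}$. If $\gcd(c,n) = d > 1$, then $s(\delta_n)^{n/d} z^{-c/d}$ is a nontrivial element whose $d$-th power is trivial, contradicting torsion-freeness (nontriviality uses that $\bar\delta_n$ has order exactly $n$ in $G_n$); the same argument with $\epsilon_n$ gives $\gcd(c,n-1)=1$. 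Hence $\gcd\bigl(c, n(n-1)\bigr) = 1$.

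The main obstacle is upgrading this coprimality to $c \equiv \pm 1 \pmod{n(n-1)}$, equivalently to showing that the Euler class $[f] = c\,[B_n] \in H^2(G_n;\mathbb{Z})$ produces a group isomorphic to $B_n$. For $n \le 3$ there is nothing more to do, since $\gcd(c,n(n-1))=1$ already forces $c \equiv \pm 1$ modulo $n(n-1)\le 6$; this matches the transparent picture $G_3 \cong \mathbb{Z}/2 * \mathbb{Z}/3$ and $B_3 = \langle x,y\mid x^2 = y^3\rangle$, where one simply adjusts the lifts so that $\tilde x^2 = \tilde y^3 = z^{\pm1}$. For $n \ge 4$ coprimality is strictly weaker than $c\equiv\pm1$, and I expect closing this gap to be the genuinely delicate point: one must argue that the remaining admissible classes either cannot arise from a circular ordering of $G_n$, or are carried to $\pm[B_n]$ by an automorphism of $G_n$ (for example one induced by a marked-point symmetry of $\Sigma_{0,n+1}$). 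Once $c = \pm 1$ is secured, the sign is absorbed by the automorphism $z \mapsto z^{-1}$ of the kernel, completing the isomorphism $\widetilde{(G_n)}_f \cong B_n$.
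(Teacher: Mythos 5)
Your reduction is correct as far as it goes, and it isolates the difficulty in exactly the right place, but the proof stops precisely there: the step $c\equiv\pm1\pmod{n(n-1)}$ for $n\ge 4$ is not a technical loose end, it is the entire content of the proposition, and nothing in your argument can supply it. The tools you use (torsion-freeness of a left-orderable group, applied to lifts of the two torsion classes) provably cannot distinguish the remaining classes from $\pm1$: for each unit $c$ modulo $n(n-1)$ the central extension of $G_n$ by $\ZZ$ with class $c\,[B_n]$ is itself torsion-free, since any torsion element of the quotient is conjugate into $\langle \bar\delta_n\rangle$ or $\langle \bar b\rangle$ and the preimages of these subgroups are infinite cyclic exactly when $\gcd(c,n(n-1))=1$. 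So for $n=4$, say, the classes $c\equiv 5,7 \pmod{12}$ pass every test you have imposed. Your two suggested escapes do not close this: showing that these classes ``cannot arise from a circular ordering'' is precisely what must be proved (it is asserted, not argued), and the automorphism route would require $\mathrm{Out}(G_n)$ to act on $H^2(G_n;\ZZ)$ with image covering all units up to sign, for which there is no evidence and little hope (recall $\mathrm{Out}(B_n)\cong\ZZ/2$ by Dyer--Grossman).

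The paper closes the gap by using the full left order on $\widetilde{(G_n)}_f$, not merely its torsion-freeness: by construction $\widetilde{(G_n)}_f$ carries the positive cone $P_f$ in which $z=(1,\mathrm{id})$ is positive, central and cofinal. Starting from the presentation $B_n=\langle a,b\mid a^n=b^{n-1},\ ba^{i-1}ba^{-i}=a^iba^{-i-1}b\rangle$ with $a=\delta_n$, $b=\delta_n\sigma_1$, the paper presents $\widetilde{(G_n)}_f$ as $\langle x,y,t\mid t\ \text{central},\ x^n=t^u,\ y^{n-1}=t^v,\ yx^{i-1}yx^{-i}=x^iyx^{-i-1}yt^{k_i}\rangle$, normalises $u=v=1$ by evaluating the cocycle on \emph{minimal generators} (in the sense of \cite{CG2}) of the circularly ordered finite cyclic subgroups $\langle a\rangle$ and $\langle b\rangle$, and then kills the residual exponents $k_i$ by an orderability argument: if some $k_i\neq 0$, one exhibits an element that is simultaneously a product of conjugates of positive cofinal elements and a definite negative power of $x$, so the presented group admits no left ordering at all. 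Since $\widetilde{(G_n)}_f$ is left orderable, every $k_i=0$ and the presentation collapses to that of $B_n$. This interplay between the presentation and the positive cone is the missing ingredient in your proposal; without it, or a substitute of comparable strength, the cases $n\ge 4$ remain open.
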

\begin{proof}
The braid group $B_n$ has presentation 
\[ B_n = \left< \sigma_1 , \ldots, \sigma_{n-1}  \hspace{1em}
\begin{array}{|c}
\sigma_i \sigma_j = \sigma_j \sigma_i \mbox{ if $|i-j|>1$}\\
\sigma_i \sigma_j \sigma_i = \sigma_j \sigma_i \sigma_j \mbox{ if $|i-j|=1$} \end{array} 
 \right>,
\]
and upon setting $a = \delta_n = \sigma_1\dots \sigma_{n-1}$ and $b = \delta_n \sigma_1$, noting that $\delta_n \sigma_i = \sigma_{i+1} \delta_n$ for $1 \leq i \leq n-2$, we arrive at the presentation 
\[ B_n = \langle a, b \mid a^n = b^{n-1}, ba^{i-1}ba^{-i} = a^{i}ba^{-i-1}b \mbox{ for } 2 \leq i \leq n/2 \rangle.
\]
Therefore our group $G_n$ has the presentation 
\[ G_n = \langle a, b \mid a^n = b^{n-1} = id,  ba^{i-1}ba^{-i} = a^{i}ba^{-i-1}b \mbox{ for } 2 \leq i \leq n/2 \rangle.
\]
Now let $f$ be an arbitrary circular ordering of $G_n$, and recall that $\widetilde{(G_n)}_f$ is constructed as $\mathbb{Z} \times G_n$ with multiplication $(n, g)(m,h) = (n+m+f(g,h), gh)$, and that there is a short exact sequence
\[ 0 \longrightarrow \mathbb{Z} \stackrel{i}{\longrightarrow} \widetilde{(G_n)}_f \stackrel{q}{\longrightarrow} G_n \longrightarrow 1.
\]
We will denote the generator of the kernel of $q$ by $t$, so that $i(t) = (1, id)$, and recall that the quotient map acts by $q(n,g) = g$.

Recall from \cite[Definition 2.9]{CG2} that one may choose a \emph{minimal generator} of any circularly ordered cyclic group.  In this setting, this means we may choose $i \in \{1, \ldots, n-2\}$ and $j \in \{1, \ldots, n-3\}$ such that $a^i$ and $b^j$ are generators of $\langle a \rangle$ and $\langle b \rangle$ respectively, satisfying $f(a^i, a^{ki})=0$ for all $k = 1, \ldots, n-1$ and $f(b^j, b^{mj})=0$ for $m = 1, \ldots, n-2$.

A presentation for $\widetilde{(G_n)}_f$ is given by
\[ \langle x, y, t \mid \mbox{$t$ central, } x^n = t^u, y^{n-1}=t^v, yx^{i-1}yx^{-i} = x^{i}yx^{-i-1}yt^{k_i} \mbox{ for } 2 \leq i \leq n/2 \rangle
\]
where $u, v, k_i \in \mathbb{Z}$ are integers that are chosen so that $(0, a^i)^n = (u, id)$ and $(0, b^j)^n = (v, id)$ and 
\[ (0,b^j)(0,a^i)^{i-1}(0,b^j)(0,a^i)^{-i} = (0,a^i)^{i}(0,b^j)(0,a^i)^{-i-1}(0,b^j)(k_i, id)
\]
for  $2 \leq i \leq n/2$ \cite[Proposition 2.5]{HEO}.  First we observe that by our choice of $i, j$, we can compute the $n$-fold product: 
\[ (0, a^i) \cdots (0, a^i) = \left(\sum_{k=1}^{n-1}f(a^i, a^{ki}), a^{ni}\right) = (1, id).
\]
Here, the equality $\sum_{k=1}^{n-1}f(a^i, a^{ki}) =1$ follows from the fact that our choice of $i$ yields $f(a^i, a^{ki}) =
0$ for $1 \leq k \leq n-2$ since $a^i$ is a minimal generator of $\mathbb{Z} / n \mathbb{Z}$, and $f(a^i, a^{(n-1)i}) =1$.  Thus $u =1$.  We similarly compute that $v = 1$.  So eliminating the variable $t$ from our presentation for $\widetilde{(G_n)}_f$, we arrive at
\[ \langle x, y \mid  x^n = y^{n-1}, yx^{i-1}yx^{-i} = x^{i}yx^{-i-1}yx^{nk_i} \mbox{ for } 2 \leq i \leq n/2 \rangle.
\]
We conclude the proof by showing that the group defined by this presentation is not left orderable unless $k_i = 0$ for all $i$.

Given a left ordering of $\widetilde{(G_n)}_f$, we can assume that $x > id$ and therefore $y>id$ as well.  Moreover, both of these generators are positive and cofinal since the group $\widetilde{(G_n)}_f$ is generated by roots of the central element $x^n = y^{n-1}$, and therefore by Proposition \ref{prop2} every element in the semigroup $sg(x,y)$ is positive and cofinal as well.  In particular, for all $w \in sg(x,y)$ and for all $g \in \widetilde{(G_n)}_f$, we have $gwg^{-1} > id$ (though perhaps this conjugate is no longer cofinal).  We use these facts below.

 First suppose that $k_i >0$ for some $i$ with $2 \leq i \leq n/2$.  Then considering the relator $yx^{i-1}yx^{-i} = x^{i}yx^{-i-1}yx^{nk_i}$, we right-multiply by $y^{-1}x^{-i+1}y^{-1}$ to arrive at
\[ yx^{i-1}yx^{-i}y^{-1}x^{-i+1}y^{-1} = x^{i}yx^{-2i}y^{-1}x^{nk_i},
\]
here we have used that $x^n$ is central.  Now the left hand side is a conjugate of the negative element $x^{-i}$ and is thus negative.  On the other hand, using centrality of $x^n$ the right hand side becomes $x^i yx^{nk_i - 2i}y^{-1}$.  Recalling that $i \leq n/2$, the quantity $x^{nk_i - 2i}$ is either positive or the identity whenever $k_i >0$.  But then $x^i yx^{nk_i - 2i}y^{-1}$ is positive, a contradiction.

Next suppose that $k_i <0$ some $i$ with $2 \leq i \leq n/2$.  Again, we begin with $yx^{i-1}yx^{-i} = x^{i}yx^{-i-1}yx^{nk_i}$ and rearrange the expression to arrive at
\[x^{-nk_i-i}y^{-1}x^{1-i}yx^{i-1}yx^{-i} = yx^{-i-1}y.
\]
Observe that the left hand side can be written as 
\[x^{-nk_i-2i}(x^iy^{-1}x^{1-i}yx^{i-1}yx^{-i})
\]
and since $n >2$, $k_i < 0$ and $i\leq n/2$, $-nk_i-2i \geq 0$. Therefore this is a product of conjugates of cofinal, positive elements, and thus, every conjugate of the left hand side is positive.  Consequently, every conjugate of the right hand side, which is $yx^{-i-1}y$, must also be positive.

This implies that $x^{-i-1}y^2, y^{-2}x^{-i-1}y^4, \dots, y^{2(n-2)}x^{-i-1}y^{2(n-1)}$ are all positive elements.  Therefore their product
\[x^{-i-1}y^2 \cdot y^{-2}x^{-i-1}y^4  \dots y^{2(n-2)}x^{-i-1}y^{2(n-1)} = x^{(n-1)(-i-1)}y^{2(n-1)} = x^{(n-1)(-i-1) + 2n}
\]
is also positive.  However, $(n-1)(-i-1) + 2n = (2-i)n+(i-n)+1$, and $2-i \leq 0$ and $i -n < n/2-n <1$, so overall this quantity is negative.  But then $x^{(n-1)(-i-1) + 2n} < id$ is a contradiction, completing the proof.
%
%
\end{proof}

\subsection{Semiconjugacy of mapping class group actions on the circle}

We wish to apply the rigidity result of Theorem \ref{translation_number_same} to mapping class groups of marked spheres and once-marked surfaces of genus one.

 Let $\Sigma_{0,n}^1$ be a disk with $n$ marked points. Recall that there is an isomorphism $\varphi:B_n \to \Mod(\Sigma_{0,n}^1)$, such that if $d$ is a curve isotopic to $\partial\Sigma_{0,n}^1$, then $\varphi(\Delta_n^2) = T_d$. Here $T_d$ denotes the Dehn twist about $d$.

There is an inclusion $\epsilon:\Sigma_{0,n}^1 \hookrightarrow \Sigma_{0,n+1}$, where the complement of $\epsilon(\Sigma_{0,n}^1)$ is a once-marked open disk. Let $*$ denote the marked point in $\Sigma_{0,n+1}$ that is in the complement of $\epsilon(\Sigma_{0,n}^1)$. Let $G_n \subset \Mod(\Sigma_{0,n+1})$ be the stabiliser of $*$.

By extending homeomorphisms of $\epsilon(\Sigma_{0,n}^1)$ to $\Sigma_{0,n+1}$ by the identity, we get the so-called capping homomorphism $\mathcal{C}: \Mod(\Sigma_{0,n}^1) \to G_n$. The capping homomorphism gives rise to the central extension
\[
0 \longrightarrow \ZZ \overset{\iota}{\longrightarrow} \Mod(\Sigma_{0,n}^1) \overset{\mathcal{C}}{\longrightarrow} G_n \longrightarrow 1
\]
where $\iota(1) = T_d$ \cite[Section 3.6.2]{FM}. 

Since there exists a left ordering $<$ on $\Mod(\Sigma_{0,n}^1)$ so that $T_d$ is $<$-cofinal, $G_n$ is circularly orderable (in fact, as observed earlier, $T_d$ is $<$-cofinal for {\it every} left ordering $<$ of $\Mod(\Sigma_{0,n}^1)$).

For the next theorem, we first recall a well-known fact about circular orderings on a group $G$---namely that they come in pairs. Let $f:G^2 \to \ZZ$ be a circular ordering on $G$. Define the \emph{opposite ordering} $f^{\text{op}}:G^2 \to \ZZ$ by 
\[
f^{\text{op}}(g,h) = \begin{cases}
1 - f(g,h) &\text{if } g \neq \id, h \neq \id, \text{ and } gh \neq \id \\
f(g,h) &\text{otherwise}.
\end{cases}
\]
It can be checked that $f^{\text{op}}$ is indeed a circular ordering on $G$. Furthermore, if we let $d:G \to \ZZ$ be the function $d(\id) = 0$ and $d(g) = 1$ if $g \neq \id$, we have
\[
f(g,h) + f^{\text{op}}(g,h) = d(g) + d(h) - d(gh)
\]
for all $g,h \in G$. Since $d$ is bounded, we have $[f] = -[f^{\text{op}}] \in H_b^2(G;\ZZ)$, and therefore in $H^2(G;\ZZ)$.  Intuitively, we can think of $f^{\text{op}}$ as being obtained from $f$ by reversing the orientation of the circle. The next theorem is a restatement, using the notation developed in this section, of Theorem \ref{thm:intro_rigidity}.

\begin{theorem}
\label{main_rigidity}
Let $\alpha$ be a simple closed curve on $\Sigma_{0,n+1}$ that surrounds $n-1$ marked points, none of which are $*$. Let $f_1$ and $f_2$ be circular orderings on $G_n$ satisfying $ \mathrm{rot}_{T_\alpha}(f_1) = \mathrm{rot}_{T_\alpha}(f_2) = 0. $ Then $[f_1] = \pm[f_2] \in H^2_b(G_n;\ZZ)$.
\end{theorem}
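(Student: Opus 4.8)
The plan is to translate the statement about circular orderings on $G_n$ into the language of left orderings of $B_n$ through the left-ordered central extensions, and then apply the translation-number rigidity of Corollary \ref{cor:agreewithDehornoy}. For each $i \in \{1,2\}$ I would fix an isomorphism $\psi_i \colon \widetilde{(G_n)}_{f_i} \to B_n$ supplied by Proposition \ref{prop:ext-is-bn}, arranged so that the positive cofinal central generator $(1,\id)$ is carried to $\Delta_n^2$. Writing $P_i = \psi_i(P_{f_i})$, this produces a left ordering of $B_n$ with $\Delta_n^2 \in P_i$. The geometric input is that, under the capping homomorphism $\mathcal{C}\circ\varphi \colon B_n \to G_n$, the curve $\alpha$ (bounding a subdisk that contains $n-1$ marked points, none equal to $*$) satisfies $T_\alpha = \mathcal{C}(\varphi(\Delta_{n-1}^2))$; that is, $T_\alpha$ is the image of the full twist $\Delta_{n-1}^2$. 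Hence any lift of $T_\alpha$ to the central extension is identified with $\Delta_{n-1}^2$ up to a central factor $\Delta_n^{2k}$, and since $\tau_{\Delta_{n-1}^2\Delta_n^{2k}}(P_i) = \tau_{\Delta_{n-1}^2}(P_i) + k$, I obtain
\[
\mathrm{rot}_{T_\alpha}(f_i) \equiv \tau_{\Delta_{n-1}^2}(P_i) \pmod{\mathbb{Z}}.
\]

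Next I would exploit the rotation-number hypothesis. The assumption $\mathrm{rot}_{T_\alpha}(f_i) = 0$ forces $\tau_{\Delta_{n-1}^2}(P_i) \in \mathbb{Z}$. Since $\Delta_n^2 \in P_i$, Corollary \ref{cor:bounded-translation-number} confines the translation number of $\Delta_{n-1}^2$ strictly between $-1$ and $1$, and the only integer in that range is $0$; therefore $\tau_{\Delta_{n-1}^2}(P_i) = 0$. Equivalently, $\Delta_{n-1}^2$ is not cofinal for $P_i$, so Corollary \ref{cor:agreewithDehornoy} applies and gives $\tau_\beta(P_i) = \tau_\beta(P_D)$ for every $\beta \in B_n$. (It is precisely here that the sharp bound $(-1,1)$ is needed: a weaker bound would leave open the nonzero integer values, which correspond to $\Delta_{n-1}^2$ being cofinal and hence to orderings not covered by Theorem \ref{translation_number_same}.)

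I would then convert equality of translation numbers back into a statement in $H^2_b(G_n;\ZZ)$. Let $f_0$ be the circular ordering on $G_n$ induced by descending the Dehornoy ordering through $B_n \to G_n$, so that $\widetilde{(G_n)}_{f_0} = B_n$ with $P_{f_0} = P_D$. Applying Proposition \ref{prop:translation-number-semi-conjugate} to the pair $(f_i, f_0)$, the equalities $\tau_\beta(P_i) = \tau_\beta(P_D)$ produce an equivalence of central extensions preserving translation numbers, giving $[f_i] = [f_0]$; if instead the relevant identification reverses the orientation of the circular order, it replaces $f_i$ by $f_i^{\mathrm{op}}$, and using $[f_i^{\mathrm{op}}] = -[f_i]$ this yields $[f_i] = -[f_0]$. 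In either case $[f_i] = \pm[f_0]$, and combining the two values gives $[f_1] = \pm[f_2] \in H^2_b(G_n;\ZZ)$, as desired.

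The main obstacle is the final step, where I must upgrade the abstract group isomorphism $\psi_i$ of Proposition \ref{prop:ext-is-bn} into an honest equivalence of central extensions compatible with the projection to $G_n$, since that compatibility is what Proposition \ref{prop:translation-number-semi-conjugate} requires. Reconciling $\psi_i$ with the quotient map introduces an automorphism of $G_n$, and controlling its effect on the bounded Euler class — in particular verifying that the only freedom is an orientation reversal $f_i \mapsto f_i^{\mathrm{op}}$ acting by $-1$ — is the delicate part and is exactly what forces the $\pm$ in the conclusion. I would also take care that the identity $\mathrm{rot}_{T_\alpha}(f_i) \equiv \tau_{\Delta_{n-1}^2}(P_i)$ is unaffected by this automorphism, using that $T_\alpha$ is characterized topologically by the isotopy type of $\alpha$.
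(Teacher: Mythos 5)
Your first three steps---transporting each $f_i$ to a left ordering $P_i$ of $B_n$ with $\Delta_n^2 \in P_i$ via Proposition \ref{prop:ext-is-bn}, using the rotation hypothesis together with the sharp bound of Corollary \ref{cor:bounded-translation-number} to force $\tau_{\Delta_{n-1}^2}(P_i)=0$, and then invoking Corollary \ref{cor:agreewithDehornoy}---are exactly the paper's argument, including your observation about why the strict bound $(-1,1)$ is indispensable. (One small inaccuracy: for an arbitrary $\alpha$ as in the statement it is not literally true that $T_\alpha = \mathcal{C}(\varphi(\Delta_{n-1}^2))$; the image of $\Delta_{n-1}^2$ is a twist $T_\beta$ about \emph{some} curve surrounding $n-1$ non-distinguished marked points, and one passes from $\beta$ to $\alpha$ by the change-of-coordinates principle together with conjugacy invariance of rotation number. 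This is easily repaired.)

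The genuine gap is the one you flag yourself at the end, and it is not a removable formality: Proposition \ref{prop:translation-number-semi-conjugate} requires an \emph{equivalence of central extensions}, i.e.\ an isomorphism commuting with both the inclusions of $\ZZ$ and the projections to $G_n$, whereas you treat Proposition \ref{prop:ext-is-bn} as supplying only an abstract group isomorphism $\psi_i$ with $B_n$. Your proposed repair---classify how the resulting automorphism of $G_n$ acts on $H^2_b(G_n;\ZZ)$ and verify that the only freedom is $f_i \mapsto f_i^{\mathrm{op}}$ acting by $-1$---is left entirely unproved, and establishing it is at least as hard as the theorem itself. The paper closes this gap by never letting the isomorphism be abstract: in the proof of Proposition \ref{prop:ext-is-bn} the extension $A_j$ is presented on generators $x_j, y_j$ with $\rho_j(x_j)=a$ and $\rho_j(y_j)=b$, and one \emph{defines} $\theta_j : B_n \to A_j$ by $\theta_j(\sigma_1\cdots\sigma_{n-1}) = x_j$ and $\theta_j(\sigma_1\cdots\sigma_{n-1}\sigma_1)=y_j$. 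With this choice $\rho_1\theta_1 = \rho_2\theta_2$ is the capping homomorphism on the nose, so $\theta_2\theta_1^{-1} : A_1 \to A_2$ is an equivalence of central extensions by construction, and no automorphism of $G_n$ ever enters the picture. The only residual freedom is whether $\theta_j(\Delta_n^2)$ equals $\iota_j(1)$ or $\iota_j(-1)$, which is normalized by replacing $f_j$ with $f_j^{\mathrm{op}}$; that single normalization is the entire source of the $\pm$ in the conclusion. Note also that the paper compares $f_1$ and $f_2$ \emph{directly}: since $\tau_g(P_1)=\tau_g(P_D)=\tau_g(P_2)$ for all $g \in B_n$, the equivalence $\theta_2\theta_1^{-1}$ preserves translation numbers and Proposition \ref{prop:translation-number-semi-conjugate} applies at once. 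This lets you dispense with your auxiliary reference ordering $f_0$, whose construction (descending the Dehornoy ordering to a circular ordering with $\widetilde{(G_n)}_{f_0} = B_n$ as ordered extensions) is itself a step the paper never needs and you did not justify.
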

\begin{proof}

Consider the central extensions
\[
0 \lra \ZZ \overset{\iota_1}{\lra} A_1 \overset{\rho_1}{\lra} G_n \lra 1 \eand 0 \lra \ZZ \overset{\iota_2}{\lra} A_2 \overset{\rho_2}{\lra} G_n \lra 1
\]
corresponding to $[f_1],[f_2] \in H^2(G;\ZZ)$ respectively. Let $j \in \{1,2\}$. Recall from the proof of Proposition \ref{prop:Cext-Gn-Bn} $G_n$ and $A_j$ admit group presentations
\begin{align*}
G_n &= \langle a, b \mid a^n = b^{n-1} = id,  ba^{i-1}ba^{-i} = a^{i}ba^{-i-1}b \mbox{ for } 2 \leq i \leq n/2 \rangle \\
A_j &= \langle x_j, y_j \mid  x_j^n = y_j^{n-1}, y_jx_j^{i-1}y_jx_j^{-i} = x_j^{i}y_jx_j^{-i-1}y_j \mbox{ for } 2 \leq i \leq n/2 \rangle
\end{align*}
respectively, with $\rho_j(x_j) = a$, $\rho_j(y_j) = b$, and $\iota_j(1) = x_j^n$. Let $\theta_j:B_n \to A_j$ be the isomorphism given by $\theta_j(\sigma_1\cdots\sigma_{n-1}) = x_j$ and $\theta_j(\sigma_1\cdots\sigma_{n-1}\sigma_1) = y_j$ as per Proposition \ref{prop:ext-is-bn}. Then $\rho_1\theta_1 = \rho_2\theta_2:B_n \to G_n$ is the capping homomorphism. By replacing $f_j$ by $f_j^{\text{op}}$ if necessary, we may assume $\theta_j(\Delta_n^2) = x_j^n = \iota_j(1)$. With this assumption, it suffices to show $[f_1] = [f_2] \in H^2_b(G_n;\ZZ)$. 

Let $Q_j$ be the positive cone of the left ordering on $A_j$ coming from the lifting construction of Subsection \ref{subsec:lifting_basics}, applied to the circular ordering $f_j$ on $G_n$. Then $\iota_j(1) = x_j^n \in Q_j$ is the corresponding positive cofinal central element. Consider the positive cone $P_j = \{g \in B_j: \theta_j(g) \in Q_j$. Then $\Delta_n^2 \in P_j$. Note that since $\theta_j(\Delta_n^2) = \iota_j(1)$, $\tau_g(P_j) = \tau_{\theta_j(g)}(Q_j)$ for all $g \in B_n$.

Consider now the element $\Delta_{n-1}^2 \in B_n$. As an element of $\Mod(\Sigma_{0,n}^1)$, $\Delta_{n-1}^2$ is a Dehn twist about a simple closed curve surrounding $n-1$ marked points. Since $p_j\theta_j$ is the capping homomorphism, $p_j\theta_j(\Delta_{n-1}^2) = T_{\beta}$ for some simple closed curve $\beta$ surrounding $n-1$ marked points in $\Sigma_{0,n+1}$. Observe that none of the $n-1$ marked points are in the complement of $\epsilon(\Sigma_{0,n}^1)$, where $\epsilon:\Sigma_{0,n}^1 \hookrightarrow \Sigma_{0,n+1}$ is the inclusion. Therefore, by the change of coordinates principle \cite[Section 1.3]{FM}, $T_{\beta}$ and $T_{\alpha}$ are conjugate in $G_n$. Since rotation number is a conjugacy invariant, we have $ \mathrm{rot}_{T_\beta}(f_j) = \mathrm{rot}_{T_\alpha}(f_j) = 0.$ So we have $\tau_{\Delta_{n-1}^2}(P_j) = \tau_{\theta_j(\Delta_{n-1}^2)}(Q_j) \in \ZZ$, which implies $\tau_{\Delta_{n-1}^2}(P_j) = 0$ by Corollary \ref{cor:bounded-translation-number}. By Corollary \ref{cor:agreewithDehornoy} we can conclude $\tau_g(P_1) = \tau_{g}(P_2)$ for all $g \in B_n$.

Finally, from the definition of $\theta_j$, we have $\theta_2\theta_1^{-1}:A_1 \to A_2$ satisfies $\theta_2\theta_1^{-1}\iota_1 = \iota_2$ and $\rho_2\theta_2\theta_1^{-1} = \rho_1$. Therefore $\theta_2\theta_1^{-1}$ is an isomorphism realising the equivalence of central extensions of $G_n$. For an arbitrary $x \in A_1$ we have
\[
\tau_x(Q_1) = \tau_{\theta_1^{-1}(x)}(P_1) = \tau_{\theta_1^{-1}(x)}(P_2) = \tau_{\theta_2\theta_1^{-1}(x)}(Q_2).
\]
By Proposition \ref{prop:translation-number-semi-conjugate} we may conclude $[f_1] = [f_2] \in H^2_b(G_n;\ZZ)$.
\end{proof}

\subsection{Genus 1 and the (projective) special linear group}

In the case $n = 3$, $G_3$ is isomorphic to the modular group $\operatorname{PSL}_2(\ZZ)$. Identify $\operatorname{PSL}_2(\ZZ)$ as the set of fractional linear transformations of the complex upper-half plane $z \mapsto \frac{qz + r}{sz + t}$, with $qt - rs = 1$ and $q,r,s,t \in \ZZ$. An explicit isomorphism $\phi:G_3 \rightarrow   \operatorname{PSL}_2(\ZZ)$ is given by
\[
\phi(a) = \left(z \mapsto \frac{-1}{z + 1}\right) \quad \text{and} \quad \phi(b) = \left(z \mapsto \frac{-1}{z}\right).
\]
The image of $\Delta_2^2 = \sigma_1^2$ in $G_3$ is given by $(a^{-1}b)^2$, and 
\[
\phi((a^{-1}b)^2) = \left(z \mapsto z-2\right).
\]
Let $F$ be the element $\left(z \mapsto z-2\right)$ in $\operatorname{PSL}_2(\ZZ)$. We get the following corollary of Theorem \ref{main_rigidity}.

\begin{corollary}\label{cor:PSL}
Let $f_1$ and $f_2$ be circular orderings on $\operatorname{PSL}_2(\ZZ)$ with the property that $\operatorname{rot}_F(f_1) = \operatorname{rot}_F(f_2) = 0$. Then $[f_1] = \pm[f_2] \in H^2_b(\operatorname{PSL}_2(\ZZ);\ZZ)$. \hfill \qedsymbol
\end{corollary}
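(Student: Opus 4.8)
The plan is to deduce this directly from Theorem \ref{main_rigidity} in the case $n = 3$, transporting all the data along the isomorphism $\phi : G_3 \to \operatorname{PSL}_2(\ZZ)$. Since $\phi$ is a group isomorphism, it induces an isomorphism on bounded cohomology $H^2_b(\operatorname{PSL}_2(\ZZ);\ZZ) \to H^2_b(G_3;\ZZ)$ and carries each circular ordering $f_j$ on $\operatorname{PSL}_2(\ZZ)$ to a circular ordering $\phi^\ast f_j := f_j \circ (\phi \times \phi)$ on $G_3$, preserving rotation numbers. Because both the hypothesis (a vanishing rotation number) and the conclusion (an equality of bounded cohomology classes up to sign) are invariant under this transport, it suffices to verify the hypotheses of Theorem \ref{main_rigidity} for the pulled-back orderings $\phi^\ast f_1, \phi^\ast f_2$ on $G_3$ and then read off the conclusion.

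The key step is matching the rotation-number condition. First I would recall that $F = \phi((a^{-1}b)^2)$ and that $(a^{-1}b)^2$ is exactly the image in $G_3$ of $\Delta_2^2 = \sigma_1^2 \in B_3$, under the identification $B_3 \cong \Mod(\Sigma_{0,3}^1)$ and the capping homomorphism $\mathcal{C} : \Mod(\Sigma_{0,3}^1) \to G_3$ (equivalently, the quotient $B_3 \to B_3/\langle \Delta_3^2 \rangle$ of Proposition \ref{prop:ext-is-bn}). As observed in the proof of Theorem \ref{main_rigidity}, $\Delta_{n-1}^2 = \Delta_2^2$ maps under $\mathcal{C}$ to a Dehn twist $T_\beta$ about a simple closed curve $\beta \subset \Sigma_{0,4}$ surrounding $n-1 = 2$ marked points, none of which is the special point $*$. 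Taking $\alpha := \beta$ (or any conjugate, since by the change-of-coordinates principle all such curves are equivalent by a homeomorphism of $\Sigma_{0,4}$ fixing $*$, and rotation number is a conjugacy invariant), the assumption $\operatorname{rot}_F(f_1) = \operatorname{rot}_F(f_2) = 0$ becomes precisely $\operatorname{rot}_{T_\alpha}(\phi^\ast f_1) = \operatorname{rot}_{T_\alpha}(\phi^\ast f_2) = 0$.

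With the hypotheses in place, Theorem \ref{main_rigidity} applied with $n = 3$ gives $[\phi^\ast f_1] = \pm[\phi^\ast f_2] \in H^2_b(G_3;\ZZ)$, and applying the inverse of the induced isomorphism on bounded cohomology yields $[f_1] = \pm[f_2] \in H^2_b(\operatorname{PSL}_2(\ZZ);\ZZ)$, as desired. I expect the main (and essentially the only) obstacle to be the bookkeeping of the previous paragraph: confirming that the purely algebraic element $F$, defined through the explicit fractional-linear presentation of $\operatorname{PSL}_2(\ZZ)$, genuinely corresponds under $\phi$ and $\mathcal{C}$ to a Dehn twist about a two-marked-point curve avoiding $*$, so that the geometric hypothesis of Theorem \ref{main_rigidity} coincides exactly with the stated vanishing of $\operatorname{rot}_F$. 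Everything else is a formal transport of structure along the isomorphism $\phi$.
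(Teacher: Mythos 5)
Your proposal is correct and matches the paper's intended argument: the paper treats this as an immediate consequence of Theorem \ref{main_rigidity} with $n=3$, using exactly the identification $\phi:G_3 \to \operatorname{PSL}_2(\ZZ)$ and the fact that $F = \phi((a^{-1}b)^2)$ is the image of $\Delta_2^2 = \sigma_1^2$, i.e.\ of a Dehn twist about a curve surrounding two marked points neither of which is $*$. Your transport of the circular orderings, rotation numbers, and bounded cohomology classes along $\phi$ is precisely the bookkeeping the paper leaves implicit.
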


There is a surjective homomorphism $\varphi:B_3 \to \operatorname{SL}_2(\ZZ)$ with $\ker(\varphi) = \langle \Delta_3^4\rangle$ given by $\varphi(\sigma_1) = \left[\begin{smallmatrix} 1 & 1 \\ 
0 & 1\end{smallmatrix}\right]$ and $\varphi(\sigma_1) = \left[\begin{smallmatrix} 1 & 0 \\ 
-1 & 1\end{smallmatrix}\right]$.

Exploiting the fact that $B_3$ is a central extension of $\operatorname{SL}_2(\ZZ)$, we can prove a theorem for $\operatorname{SL}_2(\ZZ)$ analagous to Theorem \ref{main_rigidity}.

\begin{theorem}\label{thm:SL2}
Let $A = \left[\begin{smallmatrix} 1 & 1 \\ 0 & 1 \end{smallmatrix}\right]$ and let $f_1$ and $f_2$ be circular orderings on $\operatorname{SL}_2(\ZZ)$ so that $\operatorname{rot}_A(f_1) = \operatorname{rot}_A(f_2) = 0$. Then $[f_1] = \pm[f_2] \in H_b^2(\operatorname{SL}_2(\ZZ);\ZZ)$.
\end{theorem}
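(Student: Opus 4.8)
The plan is to transpose the proof of Theorem~\ref{main_rigidity} to the central extension $0 \to \ZZ \to B_3 \to \operatorname{SL}_2(\ZZ) \to 1$, where $\varphi$ is the surjection of kernel $\langle \Delta_3^4\rangle$, in place of the capping extension. Given circular orderings $f_1,f_2$ with $\operatorname{rot}_A(f_j)=0$, I would form the left-ordered central extensions $A_j=\widetilde{(\operatorname{SL}_2(\ZZ))}_{f_j}$, with positive cones $Q_j$ and cofinal central generators $t_j=(1,\id)$, and aim to produce isomorphisms $\theta_j:B_3\to A_j$ carrying $\Delta_3^4$ to $t_j$ and compatible with $\varphi$, so that $\theta_2\theta_1^{-1}$ becomes an equivalence of extensions to which Proposition~\ref{prop:translation-number-semi-conjugate} applies.

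The first step is the analogue of Proposition~\ref{prop:ext-is-bn}: every $A_f$ is isomorphic to $B_3$. Using $\operatorname{SL}_2(\ZZ)=\langle a,b\mid a^6=b^4=\id,\ a^3=b^2\rangle$ with $a=\delta_3$ and $b=\delta_3\sigma_1$, I would choose minimal generators of $\langle a\rangle\cong\ZZ/6$ and $\langle b\rangle\cong\ZZ/4$ exactly as in Proposition~\ref{prop:ext-is-bn}, so that the corresponding lifts satisfy $x^6=t_f$ and $y^4=t_f$. For $n=3$ the range $2\le i\le n/2$ of braid-type relations is empty, so the only remaining relation is the lift $x^3=y^2t_f^{\,v}$ of the amalgamation $a^3=b^2$; since $(x^3)^2=t_f=(y^2)^2$, both $x^3$ and $y^2$ have translation number $\tfrac12$ against $t_f$, forcing $v=0$. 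The presentation collapses to $\langle x,y\mid x^3=y^2\rangle=B_3$, and one checks $\theta_f(\Delta_3^4)=t_f$.

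The step I expect to be the main obstacle is that, unlike $\operatorname{PSL}_2(\ZZ)=G_3$ in Theorem~\ref{main_rigidity}, the group $\operatorname{SL}_2(\ZZ)$ admits four inequivalent left-orderable central $\ZZ$-extensions (the units of $H^2(\operatorname{SL}_2(\ZZ);\ZZ)\cong\ZZ/12$), all abstractly isomorphic to $B_3$ but realising different classes; only the class of $\varphi$ and its opposite let us take $\theta_j$ compatible with $\varphi$ and $\sigma_1$ as a lift of $A$. This is precisely where $\operatorname{rot}_A(f_j)=0$ is needed. Writing $P_j=\theta_j^{-1}(Q_j)$, so that $\Delta_3^4\in P_j$ and hence $\Delta_3^2\in P_j$, I would compute the lift of $A$ in each class: in the two ``wrong'' classes it equals $\Delta_3^{\pm2}$ times a conjugate of $\sigma_1^{\mp1}$, so that (using $\tau_{\sigma_2}(P_j)=\tau_{\sigma_1}(P_j)$ from Proposition~\ref{conjugation}) one finds $\operatorname{rot}_A(f_j)\equiv\tfrac12+\varepsilon\pmod\ZZ$ with $|\varepsilon|=\tfrac12|\tau_{\sigma_1}(P_j)|$. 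The bound $|\tau_{\Delta_2^2}(P_j)|<1$ of Corollary~\ref{cor:bounded-translation-number} gives $|\tau_{\sigma_1}(P_j)|<\tfrac12$, whence $|\varepsilon|<\tfrac14$ and $\operatorname{rot}_A(f_j)\in(\tfrac14,\tfrac34)\bmod\ZZ$, contradicting $\operatorname{rot}_A(f_j)=0$. Thus both $f_j$ realise the class of $\varphi$ (after replacing $f_j$ by $f_j^{\mathrm{op}}$ if necessary), and $\sigma_1$ is a genuine lift of $A$.

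With the correct class in hand the remainder mirrors Theorem~\ref{main_rigidity}, the single bookkeeping wrinkle being that the central kernel is $\langle\Delta_3^4\rangle=\langle(\Delta_3^2)^2\rangle$, so translation numbers computed against $t_j=\Delta_3^4$ are half of those computed against $\Delta_3^2$. Since $\sigma_1=\Delta_2$ lifts $A$, the hypothesis $\operatorname{rot}_A(f_j)=0$ means $\tau_{\sigma_1}(P_j)\in\ZZ$ against $\Delta_3^4$, equivalently $\tau_{\Delta_2^2}(P_j)\in4\ZZ$ against $\Delta_3^2$; combined once more with Corollary~\ref{cor:bounded-translation-number} this forces $\tau_{\Delta_2^2}(P_j)=0$. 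Corollary~\ref{cor:agreewithDehornoy} then yields $\tau_g(P_1)=\tau_g(P_D)=\tau_g(P_2)$ for all $g\in B_3$, and transporting this equality through the equivalence of extensions $\theta_2\theta_1^{-1}:A_1\to A_2$ and invoking Proposition~\ref{prop:translation-number-semi-conjugate} gives $[f_1]=[f_2]\in H^2_b(\operatorname{SL}_2(\ZZ);\ZZ)$. The replacements $f_j\rightsquigarrow f_j^{\mathrm{op}}$ used to normalise orientations account for the sign, so that $[f_1]=\pm[f_2]$.
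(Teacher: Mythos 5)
Your proposal is correct, and its skeleton is the paper's: identify the left-ordered central extensions of $\operatorname{SL}_2(\ZZ)$ with $B_3$, pull the positive cones back to cones $P_j$ on $B_3$ containing $\Delta_3^2$, keep track of the factor of two between translation numbers measured against $\Delta_3^4$ and against $\Delta_3^2$, force $\tau_{\sigma_1^2}(P_j)=0$ using Corollary \ref{cor:bounded-translation-number}, and finish with Corollary \ref{cor:agreewithDehornoy} and Proposition \ref{prop:translation-number-semi-conjugate}. You diverge in two places, and both divergences are improvements rather than detours. First, you prove that the extensions are isomorphic to $B_3$ by rerunning Proposition \ref{prop:ext-is-bn} on the presentation $\langle a,b \mid a^6=b^4=\id,\ a^3=b^2\rangle$ (empty braid-relation range, with your translation-number argument forcing the lifted amalgamation relation to be $x^3=y^2$); the paper instead cites, without proof, that every torsion-free central extension of $\operatorname{SL}_2(\ZZ)$ by $\ZZ$ is isomorphic to $B_3$. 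Second, and crucially, you normalise so that $\sigma_1$ is an honest lift of $A$, which forces you to exclude the extension classes $\pm5\in H^2(\operatorname{SL}_2(\ZZ);\ZZ)\cong\ZZ/12$, and your estimate $\operatorname{rot}_A(f_j)\equiv\tfrac12+\varepsilon \pmod{\ZZ}$ with $|\varepsilon|<\tfrac14$ (via Corollary \ref{cor:bounded-translation-number} and conjugacy invariance) does this correctly. The paper skips this step: it normalises only $\theta_j(\Delta_3^4)=\iota_j(1)$ and $\rho_j\theta_j(\sigma_1^2)=A^2$ and then asserts that $\theta_2\theta_1^{-1}$ is an equivalence of central extensions. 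That assertion does not follow from the stated normalisation, because the class $-5$ extension admits it as well: composing $\varphi$ with the outer automorphism of $\operatorname{SL}_2(\ZZ)$ fixing $a$ and inverting $b$ sends $\sigma_1\mapsto -A$, hence still sends $\sigma_1^2\mapsto A^2$, yet yields an extension inequivalent to $\pm[\varphi]$. Since the paper's argument only ever invokes $\operatorname{rot}_{A^2}(f_j)=2\operatorname{rot}_A(f_j)=0$, taken literally it would prove the statement with $A^2$ in place of $A$, which is false: pushing the Dehornoy ordering through the class $-5$ extension gives a circular ordering with $\operatorname{rot}_{A^2}=0$ but $\operatorname{rot}_A=\tfrac12$, whose bounded class is not $\pm$ that of the standard ones. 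So your third paragraph supplies a step that is genuinely missing from the published proof — it is exactly what makes $\theta_2\theta_1^{-1}$ an equivalence of extensions — and what the paper's shortcut buys is only brevity, at the cost of that gap; your longer argument is the complete one.
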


The proof follows the same strategy as that of Theorem \ref{main_rigidity}, with many of the details being identical. However, there are a few key differences which we highlight in the proof.
\begin{proof}
Let 
\[
0 \lra \ZZ \overset{\iota_1}{\lra} H_1 \overset{\rho_1}{\lra} \operatorname{SL}_2(\ZZ) \lra 1 \eand 0 \lra \ZZ \overset{\iota_2}{\lra} H_2 \overset{\rho_2}{\lra} \operatorname{SL}_2(\ZZ) \lra 1
\]
 be the left-ordered central extensions of $\operatorname{SL}_2(\ZZ)$ corresponding to $f_1$ and $f_2$ respectively. For the remainder of this proof, let $j \in \{1,2\}$. Any torsion-free central extension of $\operatorname{SL}_2(\ZZ)$ is isomorphic to $B_3$. By replacing $f_j$ by $f_j^{\text{op}}$ if necessary, we can choose isomorphisms $\theta_j:B_3 \to H_j$ so that $\theta_j(\Delta_3^4) = \iota_j(1)$ and $\rho_j\theta_j(\sigma_1^2) = \left[\begin{smallmatrix} 1 & 2 \\ 0 & 1 \end{smallmatrix}\right] = A^2$. Note that $\mathrm{rot}_{f_j}(A^2) = 2\mathrm{rot}_{f_j}(A) = 0$.

 Let $Q_j$ be the positive cone on $H_j$, and let $P_j = \{b \in B_3 \mid \theta_j(b) \in Q_j\}$. Note that $P_j$ is a positive cone on $B_3$ with $\Delta_3^2 \in P_j$. Let $\mathcal{T}_b(P_j)$ denote the translation number of $b \in B_3$ treating $\Delta_3^4$ as the cofinal central element, and $\tau_b(P_j)$ the translation number treating $\Delta_3^2$ as the cofinal central element. Then $2\mathcal{T}_b(P_j) = \tau_b(P_j)$ for all $b \in B_3$. Since $\operatorname{rot}_{f_j}(A^2) = 0$, $\mathcal{T}_{\sigma_1^2}(P_j) \in \ZZ$ so $\tau_{\sigma_1^2}(P_j) \in 2\ZZ$. Therefore by Corollary \ref{cor:bounded-translation-number}, $\tau_{\sigma_1^2}(P_j) = 0$. It follows from Corollary \ref{cor:agreewithDehornoy} that $\tau_b(P_1) = \tau_b(P_2)$ for all $b \in B_3$, and so $\mathcal{T}_b(P_1) = \mathcal{T}_b(P_2)$. Therefore for all $g \in H_1$, $\tau_g(Q_1) = \tau_{\theta_2\theta_1^{-1}(g)}(Q_2)$. The proof is completed by noting that $\theta_2\theta_1^{-1}:H_1 \to H_2$ is an equivalence of central extensions and applying Proposition \ref{prop:translation-number-semi-conjugate}.
\end{proof}

Let $\Sigma$ be a genus-1 surface, or a genus-1 surface with one marked point. Then $\Mod(\Sigma) \cong \operatorname{SL_2(\ZZ)}$ via an isomorphism mapping a positive Dehn twist $T_a$ about a non-separating simple closed curve to $\left[\begin{smallmatrix} 1 & 1 \\ 0 & 1 \end{smallmatrix}\right]$ \cite[\S2.2.4]{FM}. Since positive Dehn twists about non-separating simple closed curves are conjugate in $\Mod(\Sigma)$ \cite[\S3.3]{FM}, we have the following corollary.

\begin{corollary}\label{cor:genus-1}
Let $\Sigma = \Sigma_1$ or $\Sigma_{1,1}$, and let $a$ be a non-separating simple closed curve on $\Sigma$. Let $f_1$ and $f_2$ be circular orderings on $\Mod(\Sigma)$ so that $\operatorname{rot}_{T_a}(f_1) = \operatorname{rot}_{T_a}(f_2) = 0$. Then $[f_1] = \pm[f_2] \in H_b^2(\Mod(\Sigma);\ZZ)$. \hfill \qedsymbol
\end{corollary}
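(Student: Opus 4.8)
The plan is to reduce Corollary \ref{cor:genus-1} directly to Theorem \ref{thm:SL2} along the isomorphism $\Psi : \Mod(\Sigma) \to \operatorname{SL}_2(\ZZ)$ recalled in the preceding paragraph, which sends the positive Dehn twist about a fixed non-separating simple closed curve $a_0$ to $A = \left[\begin{smallmatrix} 1 & 1 \\ 0 & 1 \end{smallmatrix}\right]$. Since the corollary is stated with a \qedsymbol, the expected proof is short: everything of substance is already contained in Theorem \ref{thm:SL2}, and what remains is to check that circular orderings, rotation numbers, and bounded cohomology classes all transport correctly through the group isomorphism.

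First I would observe that circular orderings are natural with respect to group isomorphisms. Given a circular ordering $f_j$ on $\Mod(\Sigma)$, define a function $f_j'$ on $\operatorname{SL}_2(\ZZ)$ by $f_j'(M,N) = f_j(\Psi^{-1}(M), \Psi^{-1}(N))$; one checks directly from the cocycle conditions that $f_j'$ is again a circular ordering. Because the left-ordered central extension $\widetilde{G}_f$ and the floor function depend only on the group structure together with the cocycle, the rotation number is carried along accordingly, so that $\operatorname{rot}_M(f_j') = \operatorname{rot}_{\Psi^{-1}(M)}(f_j)$ for every $M$. Moreover $\Psi$ induces an isomorphism $\Psi^* : H_b^2(\operatorname{SL}_2(\ZZ);\ZZ) \to H_b^2(\Mod(\Sigma);\ZZ)$ which is $\ZZ$-linear and sends $[f_j']$ to $[f_j]$; in particular it commutes with negation, so any equality holding up to sign is preserved.

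Next I would remove the dependence on the particular curve used to define $\Psi$. The hypothesis concerns an arbitrary non-separating simple closed curve $a$, whereas $\Psi$ was built from the fixed curve $a_0$ with $\Psi(T_{a_0}) = A$. Since positive Dehn twists about non-separating simple closed curves are conjugate in $\Mod(\Sigma)$ \cite[\S3.3]{FM} and rotation number is a conjugacy invariant, we get $\operatorname{rot}_{T_{a_0}}(f_j) = \operatorname{rot}_{T_a}(f_j) = 0$, and hence $\operatorname{rot}_A(f_j') = \operatorname{rot}_{T_{a_0}}(f_j) = 0$ for $j = 1, 2$.

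Finally, applying Theorem \ref{thm:SL2} to the circular orderings $f_1', f_2'$ on $\operatorname{SL}_2(\ZZ)$ yields $[f_1'] = \pm[f_2'] \in H_b^2(\operatorname{SL}_2(\ZZ);\ZZ)$, and transporting this identity back through $\Psi^*$ gives $[f_1] = \pm[f_2] \in H_b^2(\Mod(\Sigma);\ZZ)$, completing the proof. I anticipate no genuine obstacle: the only points needing care are the naturality of the circular-ordering/rotation-number/bounded-cohomology assignments under the isomorphism $\Psi$ and the conjugacy-invariance argument that neutralises the choice of non-separating curve, both of which are routine and already implicit in the remarks preceding the statement.
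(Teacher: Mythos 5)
Your proposal is correct and matches the paper's intended argument exactly: the paper states the corollary with a terminal \qedsymbol\ precisely because the preceding paragraph supplies the two ingredients you use --- the isomorphism $\Mod(\Sigma) \cong \operatorname{SL}_2(\ZZ)$ carrying a non-separating twist to $\left[\begin{smallmatrix} 1 & 1 \\ 0 & 1 \end{smallmatrix}\right]$, and conjugacy of non-separating Dehn twists to handle an arbitrary curve $a$ --- after which Theorem \ref{thm:SL2} applies. Your explicit verification that circular orderings, rotation numbers, and bounded cohomology classes transport naturally through the isomorphism is exactly the routine check the paper leaves implicit.
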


\begin{remark} There is a natural action of $G_n$ on $S^1$ via the so-called conical cover (see \cite{BW}). Consider the $n+1$-punctured sphere $\Sigma_{0,n+1}$ with a distinguished puncture $*$, so $G_n$ is isomorphic to the subgroup of $\Mod(\Sigma_{0,n+1})$ that fixes $*$. Assuming $n \geq 3$, we can fix a hyperbolic metric on $\Sigma_{0,n+1}$. Choose a parabolic element $\gamma \in \pi_1(\Sigma_{0,n+1})$ corresponding to the distinguished puncture $*$, and let $\widetilde\Sigma_{0,n+1}$ be the cover corresponding to the cyclic subgroup $\langle \gamma \rangle \subset \pi_1(\Sigma_{0,n+1})$. The covering space $\widetilde\Sigma_{0,n+1}$ is called the \emph{conical cover} at the puncture $*$. The conical cover is homeomorphic to an open punctured disk and has a hyperbolic metric obtained by pulling back the metric from $\Sigma_{0,n+1}$. The Gromov boundary of $\widetilde \Sigma_{0,n+1}$ is the disjoint union of a point (corresponding to the puncture $*$) and a circle. The boundary circle parametrises geodesic rays starting from $*$, and every ray out of $*$ has a unique preferred lift to $\widetilde \Sigma_{0,n+1}$. By lifting elements of $G_n$ to $\widetilde\Sigma_{0,n+1}$, we get an action of $G_n$ on the boundary circle. Consider now a simple closed curve $\alpha$ on $\Sigma_{0,n+1}$ surrounding $*$ and another puncture as in Theorem \ref{main_rigidity}. There is a ray $\delta$ from $*$ to the other puncture that is disjoint from $\alpha$. Therefore $T_\alpha$ fixes $\delta$, and the lift of $T_\alpha$ to $\widetilde \Sigma_{0,n+1}$ fixes the lift of $\delta$. This gives a fixed point of the action of $T_\alpha$ on $S^1$, and so the rotation number of $T_\alpha$ is $0$. By Theorem \ref{main_rigidity}, every action of $G_n$ on $S^1$ such that $T_\alpha$ acts with rotation number $0$ is semiconjugate (up to reversing the orientation of the circle) to the action coming from the conical cover. 

Similarly, $\mathrm{SL}_2(\ZZ)$ acts on the set of half rays in $\RR^2$, which is circularly ordered. The element $A = \left[\begin{smallmatrix} 1 & 1 \\ 0 & 1\end{smallmatrix}\right]$ fixes the half ray passing through the point $(1,0)$, and so the rotation number of $A$ corresponding to this action is $0$. Theorem \ref{thm:SL2} implies that any action of $\mathrm{SL}_2(\ZZ)$ on $S^1$ such that $A$ acts with rotation number 0 is semiconjugate, up to reversing the orientation of the circle, to the action on the set of half rays.

The modular group $\mathrm{PSL}_2(\ZZ)$ acts on the upper half complex plane $\HH^2$ by isometries and therefore on its boundary $\partial\HH^2 \simeq S^1$. The element $F = (z \mapsto z - 2)$ fixes $\infty$ and therefore $F$ acts with rotation number $0$. We can conclude from Corollary \ref{cor:PSL} that up to reversing the orientation of the circle, any action on $S^1$ with the property that $F$ acts with rotation number $0$ is semiconjugate to the action on $\partial \HH^2$ described above.
\end{remark}

\end{document}